\DeclareFontFamily{U}{rsfs}{\skewchar\font"7F}
\DeclareFontShape{U}{rsfs}{m}{n}{
	<-6> rsfs5
	<6-8> rsfs7
	<8-> rsfs10
	}{}
\DeclareMathAlphabet{\mathscr}{U}{rsfs}{m}{n}
\theoremstyle{plain}
\newtheorem{theorem}{Theorem}[section]
\newtheorem{prop}[theorem]{Proposition}
\newtheorem{lemma}[theorem]{Lemma}
\newtheorem{cor}[theorem]{Corollary}
\theoremstyle{definition}
\newtheorem{define}[theorem]{Definition}
\theoremstyle{remark}
\newtheorem{ass}{Assumption}[section]
\newtheorem{remark}{Remark}[section]
\def\real{{\mathbb R}}
\def\natural{{\mathbb N}}
\def\integer{{\mathbb Z}}
\def\dive{\mathop{\mathrm{div}}\nolimits}
\def\supp{\mathop{\mathrm{supp}}\nolimits}
\numberwithin{equation}{section}
\begin{document}
\title[Hydrodynamic limit for the
        interface model with a conservation law]{Hydrodynamic limit
	for the Ginzburg-Landau $\nabla\phi$
	interface model with a conservation law and Dirichlet
	boundary conditions}
\author{Takao Nishikawa}
\address{Department of Mathematics, College of Science and Technology,
Nihon University, \\ 1-8-14 Kanda-Surugadai, Chiyoda-ku, Tokyo 101-8308, Japan}
\email{nisikawa@math.cst.nihon-u.ac.jp}
\begin{abstract}
	Hydrodynamic limit for the Ginzburg-Landau $\nabla\phi$ interface model
	with a conservation law was established in \cite{N02a} under periodic boundary conditions. This paper studies the same problem on a
	bounded domain imposing the Dirichlet boundary condition.
	A nonlinear partial equation of fourth order satisfying
	the boundary conditions is derived as the macroscopic equation.
	Its solution converges to the Wulff shape
	derived by \cite{DGI00} as the time $t\to\infty$.
\end{abstract}
\keywords{Ginzburg-Landau model, effective interfaces, massless fields} 
\subjclass{60K35, 82C24, 35K55}

\maketitle
%
%
\section{Introduction}
The Ginzburg-Landau $\nabla\phi$ interface model determines stochastic dynamics for a discretized hypersurface separating two microscopic phases embedded in the $d+1$ dimensional space. The position of the hypersurface is described by height variables $\phi=\{\phi(x);x\in\Gamma\}$ measured from a fixed $d$-dimensional
discrete hyperplane $\Gamma$. We will take $\Gamma=\Gamma_N:=(\integer/N\integer)^d$ when we consider
the system on a discretized torus with periodic boundary condition, or
$\Gamma=D_N\subset \integer^d$ when we consider the system on a domain
with some boundary condition. Here, 
$D_N$ is a microscopic domain corresponding to a given macroscopic
domain $D\subset\real^d$ which is bounded and has a smooth boundary;
see Section~\ref{sec-model} for the definition of $D_N$.
We then admit an energy (Hamiltonian) for the interface $\phi$ by
\[
	H(\phi)=\frac{1}{2}\sum_{\begin{subarray}{c}
		x,y\in\Gamma, \\ |x-y|=1
	\end{subarray}}V(\phi(x)-\phi(y))
	+\sum_{\begin{subarray}{c}
		x\in\Gamma,y\in\integer^d\smallsetminus\Gamma, \\ |x-y|=1
	\end{subarray}}V(\phi(x)-\phi(y))
\]
with a potential $V\in C^2(\real)$. Note that, when $\Gamma=D_N$,
we need to give 
a boundary condition $\{\phi(x); x\in \integer^d\smallsetminus D_N\}$
in order to define the Hamiltonian $H$.
Once we introduce the Hamiltonian $H$, the dynamics of the interface
can be introduced by means of the Langevin equation
\begin{equation}\label{SDE0}
	d\phi_t(x)=-\frac{\partial H}{\partial \phi(x)}(\phi_t)\,dt
	+\sqrt{2}dw_t(x),\quad x\in \Gamma,
\end{equation}
where $\{w_t(x);x\in\Gamma\}$ is a family of independent copies of
the one dimensional standard Brownian motion.
The hydrodynamic scaling limit has been established for 
the dynamics governed by \eqref{SDE0} with
$\Gamma=\Gamma_N$ in \cite{FS97}, for
the dynamics with $\Gamma=D_N$ and the Dirichlet boundary condition in \cite{N03}. In both cases, the macroscopic motion is described by the nonlinear
partial differential equation
\begin{align}
	\frac{\partial}{\partial t}h(t,\theta)&=
	\dive\Bigl\{(\nabla\sigma)(\nabla h(t,\theta))
	\Bigr\} \nonumber \\
	& \equiv\sum_{i=1}^d
	\frac{\partial}{\partial\theta_i}
	\left\{\frac{\partial\sigma}{\partial u_i}
	\left(\nabla h(t,\theta)\right)\right\}
	,\quad\theta\in D,\,t>0 \nonumber 
\end{align}
with an appropriate boundary condition.
Here, $\sigma:\real^d\to\real$ is a function called ``surface tension,''
which gives the local energy of macroscopic interface with tilt $u\in\real^d$,
see \cite{FS97} or \cite{F05}for precise definition.

The dynamics \eqref{SDE0} can be regarded as the model
corresponding to the Glauber dynamics in the particles' systems.
Let us introduce the model
corresponding to the Kawasaki dynamics in the particles' systems as follows:
\begin{equation}\label{SDE1a}
	d\phi_t(x)=-(-\Delta_\Gamma)
	\frac{\partial H}{\partial \phi(\cdot)}(\phi_t)(x)\,dt
	+\sqrt{2}d\tilde{w}_t(x),\quad x\in \Gamma,
\end{equation}
where $\Delta_\Gamma$ is the discrete Laplacian on $\Gamma$ with the Neumann boundary condition when $\Gamma=D_N$ or periodic boundary condition when
$\Gamma=\Gamma_N$ defined by
\begin{align}
	(\Delta_\Gamma\psi)(x)&\equiv\sum_{y\in\Gamma}\Delta_\Gamma(x,y)\psi(y) 
	\nonumber \\
	&=\sum_{y\in\Gamma,|x-y|=1}\left\{\psi(y)-\psi(x)\right\},
	\quad \psi\in\real^\Gamma,\,x\in\Gamma,\label{laplacian-gamma}
\end{align}
and $\{\tilde{w}_t(x);x\in\Gamma\}$ is a family of Gaussian processes
with mean zero and covariance structure
\[
	E[\tilde{w}_t(x)\tilde{w}_s(y)]=-\Delta_\Gamma(x,y)t\wedge s,\quad
	x,y\in\Gamma, t,s\ge 0.
\]
We note that the dynamics \eqref{SDE1a} preserves the sum
$\sum_{x\in\Gamma}\phi_t(x)$, which can be regarded as the volume of the
phase located below the interface.

The main purposes of this paper are to establish the hydrodynamic scaling limit
of $\phi_t$ determined by \eqref{SDE1a} with $\Gamma=D_N$
under the Dirichlet boundary condition
\[
\phi_t(x)=0,\quad x\in \integer^d\smallsetminus D_N,\,t\ge 0,
\]
and to clarify the relationship between the macroscopic motion and
``Wulff shape'' studied by \cite{DGI00} under a static situation.
The main result in this paper is that, under the scaling $N^4$ for time
and $N$ for space, the macroscopic motion corresponding to $\phi_t$ is described
by the nonlinear partial differential equation with Dirichlet boundary condition
\begin{equation}\label{PDE1} 
\left\{\begin{split}
	\frac{\partial}{\partial t}h(t,\theta)&=
	-\Delta \dive\Bigl\{(\nabla\sigma)(\nabla h(t,\theta))
	\Bigr\} \\
	& \equiv -\sum_{i=1}^d\sum_{j=1}^d\frac{\partial^2}{\partial\theta_j^2}
	\frac{\partial}{\partial\theta_i}
	\left\{\frac{\partial\sigma}{\partial u_i}
	\left(\nabla h(t,\theta)\right)\right\}
	,\quad\theta\in D,\,t>0 \\
	h(t,\theta)&=0,\quad \theta\in D^c,t>0.
\end{split}\right.
\end{equation}

We note that the equation \eqref{PDE1} is the gradient flow
in a certain affine subspace in $H^1(D)^*$
with respect to the energy functional
\[
	\Sigma_D(h)=\int_D \sigma(\nabla h(\theta))\,d\theta
\]
with the Dirichlet boundary condition $h|_{D^\complement}=0$,
where $H^1(D)^*$ is the topological dual of $H^1(D)$,
see Section~\ref{subsec-mainresult} for details.
Here, $\sigma:\real^d\to\real$ is a function called ``surface tension,''
which gives the local energy of macroscopic interface with tilt $u\in\real^d$,
see \cite{FS97} for precise definition.
The functional $\Sigma_D$ is called ``total surface tension,''
which gives the total energy of the interface $h$.
We note that the total surface tension $\Sigma_D$ coincides with 
the rate functional for the large deviation principle under the
static situation, see \cite{DGI00}.
Taking $\Gamma=\Gamma_N$ instead of $D_N$, the large scale hydrodynamic
behavior has been studied by \cite{N02a}.

We should also mention the relationship between the equation \eqref{PDE1}
and the Wulff shape discussed in \cite{DGI00}. As an application of 
the large deviation principle, the macroscopic height variable $h^N$
under the equilibrium state (Gibbs measure) conditioned on the total volume
converges to the macroscopic interface so-called ``Wulff shape,'' which
is characterized as the solution of the variational problem
\begin{equation}\label{variational}
	\arg\inf\left\{\Sigma_D(h);\, h\in H_0^1(D),
	 \int_D h(\theta)\,d\theta=v\right\}
\end{equation}
as $N\to\infty$, where $v$ is the limit of the volume of $h^N$
rescaled by $N^{-d}$.
We emphasize that the solution $h(t)$ of \eqref{PDE1} converges
as $t\to\infty$, and the limit coincides with the unique
solution of the variational
problem \eqref{variational}. Indeed, the macroscopic motion
described by \eqref{PDE1} relaxes the total energy $\Sigma_D$ and
converges to its minimizer, that is, the Wulff shape as $t\to\infty$.

Before closing the introduction, let us briefly give the organization of
this paper.
In Section~\ref{sec-model}, we formulate our problem more precisely
and state the main result. In Section~\ref{sec-PDE}, we study several
properties of the macroscopic equation \eqref{PDE1} and its spatial
discretization. In Section~\ref{sec-stationary}, we show that
a translation-invariant stationary measure for the dynamics $\phi_t$
on the infinite lattice $\integer^d$ need to be a canonical Gibbs measure
corresponding to the Hamiltonian $H$. Combining the above with
the known result in \cite{N02a}, we have the characterization
of the family of translation-invariant stationary measures of the
dynamics on $\integer^d$.
In Section~\ref{sec-pf-mainthm}, we derive the macroscopic equation
\eqref{PDE1} from the stochastic dynamics \eqref{SDE1a} with $\Gamma=D_N$,
after establishing several estimates on it.

\section{Model and main results}\label{sec-model}
\subsection{Model}\label{subsec-model}
Let $D$ be a bounded and connected domain in $\real^d$
with a Lipschitz boundary.
For convenience, we assume that $D$ contains the origin of $\real^d$.
Furthermore, we assume the following condition on $D$:
\begin{ass}\label{ass-domain}
	Let $\tilde{D}_N=ND\cap \integer^d$.  
	We assume that there exists a constant $C>0$ independent of $N$
	such that
	\[d_{\tilde{D}_N}(x,y)\le C,\quad x,y\in \tilde{D}_N, |x-z|\le 2,
	|y-z|\le 2\]	
	for every $N\ge1$ large enough and $z\in \integer^d\smallsetminus \tilde{D}_N$, where $d_{\tilde{D}_N}$ is the ordinary graph distance on $\tilde{D}_N$.
\end{ass}
\begin{remark}
Assumption~\ref{ass-domain} is satisfied if the domain $D$ is convex.
\end{remark}

Let us introduce the discretized microscopic domain corresponding to $D$.
To keep notation simple, we consider $D_N\subset\integer^d$ defined
by 
\[D_N=\{x\in \tilde{D}_N;\,B(x/N,5/N)\subset D\},\]
where $B(\alpha,l)$ stands for the hypercube in $\real^d$
with center $\alpha=(\alpha_i)_{i=1}^d$ and side length $l>0$, that is,
\[B(\alpha,l)=\prod_{i=1}^d[\alpha_i-l/2,\alpha_i+l/2).\]
On $D_N$ we consider the dynamics governed
by the following stochastic differential equations (SDEs)
\begin{equation}\label{SDE1}
	d\phi_t(x)=-(-\Delta_{D_N})U_\cdot(\phi_t)(x)\,dt+\sqrt{2}d\tilde w_t(x),\quad x\in D_N,
\end{equation}
with the Dirichlet boundary condition 
\begin{equation}\label{bc}
	\phi_t(x)=0,\quad x\in\integer^d\smallsetminus D_N
\end{equation}
and an initial data $\phi_0=\{\phi_0(x);\,x\in\integer^d\}$ satisfying
the condition \eqref{bc} with $t=0$,
where $U_x(\phi)$ in the drift term is defined by
\begin{equation}\label{drift-term}
	U_x(\phi):=\frac{\partial H}{\partial\phi(x)}(\phi)
	\equiv \sum_{y\in\integer^d;\,|x-y|=1}V'(\phi(x)-\phi(y))
\end{equation}
for $\phi\in\real^{\integer^d}$ and $x\in D_N$.
Here, $\Delta_{D_N}$ is the Laplacian with the Neumann boundary
condition on $D_N$, that is,
$\Delta_{D_N}$ is defined by
\begin{equation}\label{discrete-Laplacian}
	\Delta_{D_N}\psi(x)=\sum_{y\in D_N;\, |x-y|=1}(\psi(y)-\psi(x))
\end{equation}
for $\psi:\real^{D_N}\to\real$.
The process $\tilde w_t=\{\tilde w_t(x);\,x\in D_N\}$ is a family of Gaussian processes
with mean zero and covariance structure
\[
E[\tilde w_{t}(x)\tilde w_{s}(y)]=(-\Delta_{D_N})(x,y)t\wedge s,\quad x,y\in D_N,
\,t,s\ge 0,
\]
where $\Delta_{D_N}(x,y)$ is the kernel of $\Delta_{D_N}$, that is,
$\Delta_{D_N}(x,y)=\Delta_{D_N}1_{y}(x)$. 
Note that a stochastic processes $\tilde w_t$ satisfying the above
can be constructed by
\[\tilde w_t(x)=\sqrt{-\Delta_{D_N}}w_t(x),\]
where $\sqrt{-\Delta_{D_N}}$ is the square root of
$-\Delta_{D_N}$
and $\{w_t(x);\,x\in D_N\}$
is a family of independent one
dimensional Brownian motions. 
For convenience, we extend $\tilde w_t$ to the process on $\integer^d$
by putting $\tilde w_{t}(x)=0$ when $x\in \integer^d\smallsetminus D_N$.

Through out this paper, we always assume the following conditions on $V$:
\begin{ass}\label{ass-potential}
The function $V:\real\to\real$ satisfies following three conditions:
\begin{enumerate}
	\item $V\in C^2(\real)$.
	\item $V$ is symmetric, that is, $V(\eta)=V(-\eta)$ holds
	for all $\eta\in\real$.
	\item There exist constants $c_{+},c_{-}>0$ such that
	\[c_{-}\le V''(\eta)\le c_{+},\quad \eta\in\real\]
	holds. 
\end{enumerate}
\end{ass}
We regard \eqref{SDE1} as the model describing the motion of microscopic
interfaces and introduce the macroscopic height variable $h^N$ by
scaling $N^4$ for time while $N$ for space:
\begin{equation}\label{def-macro-height}
	h^N(t,\theta)=\sum_{x\in\integer^d}N^{-1}\phi_{N^4t}(x)1_{B(x/N,1/N)}(\theta),\quad \theta\in\real^d,
\end{equation}
where $\phi_t=\{\phi_t(x);\,x\in\integer^d\}$ is the solution
of \eqref{SDE1} with \eqref{bc}.
We emphasize that the suitable scaling is not the diffusive one.

\subsection{Main Result}\label{subsec-mainresult}
The main result in this paper is that the macroscopic height variable
defined by \eqref{def-macro-height} converges to the solution of
\eqref{PDE1} as $N\to\infty$. We shall prepare settings
to state the above precisely.
We introduce the triplet
\[H^1(D)\subset L^2(D)\simeq L^2(D)^*\subset H^1(D)^*,\]
where $H^1(D)^*$ is the topological dual of $H^1(D)$.
These spaces are equipped standard norms denoted by
$\|\cdot\|_{H^1(D)},\|\cdot\|_{L^2(D)}$ and $\|\cdot\|_{H^1(D)^*}$,
respectively. 
We denote the duality relation between $H^1(D)^*$ and $H^1(D)$
in this context by ${}_{H^1(D)^*}\langle\cdot,\cdot\rangle_{H^1(D)}$,
namely, ${}_{H^1(D)^*}\langle\cdot,\cdot\rangle_{H^1(D)}$ denotes
the duality relation satisfying 
\[{}_{H^1(D)^*}\langle f,g\rangle_{H^1(D)}
=(f,g)_{L^2(D)},\quad f,g\in L^2(D),\]
where $(\cdot,\cdot)_{L^2(D)}$ is the standard inner product
of $L^2(D)$. 
To make notations simple, we denote
${}_{H^1(D)^*}\langle \cdot,\cdot\rangle_{H^1(D)}$
by $\langle\cdot,\cdot\rangle$ if no confusion arises. 

Under these settings, our main result is 
the following:
\begin{theorem}\label{hydro}
We assume Assumptions~\ref{ass-domain} and \ref{ass-potential}.
We furthermore assume that the sequence of initial data $\phi_0=\phi_0^N$
for \eqref{SDE1} satisfies
\begin{equation}
	\lim_{N\to\infty}E\|h^N(0)-h_0\|_{H^{1}(D)^*}^2=0
\end{equation}
with some $h_0\in L^2(D)$,
where $h^N(0)$ is the macroscopic height variable corresponding to $\phi_0^N$.
Then, for every $t>0$, $h^N(t)$ converges as $N\to\infty$ to $h(t)$, which is
the unique weak solution of the partial differential equation \eqref{PDE1}
%
with the initial data $h_0$.
More precisely, for every $t>0$,
\begin{equation}\label{eq1.7b}
	\lim_{N\to\infty}E\|h^N(t)-h(t)\|_{H^{1}(D)^*}^2=0
\end{equation}
holds.
\end{theorem}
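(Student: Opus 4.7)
The plan is to follow the by-now standard program for hydrodynamic limits of Ginzburg--Landau $\nabla\phi$ interface models as developed in \cite{FS97,N02a,N03}, adapted to the fourth-order conservative dynamics \eqref{SDE1a} and the Dirichlet boundary condition.

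I would begin by setting up the martingale problem. For a test function $J\in C_0^\infty(D)$ and the pairing $\langle h^N(t),J\rangle$, It\^o's formula combined with the $N^4$ time rescaling yields a decomposition
\[
	\langle h^N(t),J\rangle=\langle h^N(0),J\rangle
	+\int_0^t A^N(s,J)\,ds+M_t^N(J).
\]
Since the drift of $\phi_t$ is $-(-\Delta_{D_N})U_\cdot(\phi_t)$, a first summation by parts moves $-\Delta_{D_N}$ onto $J$ (producing the discrete Laplacian which is $\approx N^{-2}\Delta J$ in the limit) and a second one converts $U_x=\sum_{|y-x|=1}V'(\phi(x)-\phi(y))$ into discrete differences of $\Delta J$, so that after these manipulations
\[
	A^N(s,J)\approx N^{-d}\sum_{i,x}V'(\phi_{N^4 s}(x+e_i)-\phi_{N^4 s}(x))\,\partial_i\Delta J(x/N).
\]
The quadratic variation of $M_t^N(J)$ is of order $N^{-d-2}$ thanks to the extra $-\Delta_{D_N}$ in the noise covariance, hence vanishes in $L^2$ as $N\to\infty$.

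The heart of the argument is then a local replacement (one-block / two-block) result: against smooth test functions, spatial averages of $V'(\phi(x+e_i)-\phi(x))$ over mesoscopic blocks centred at $x$ should be replaced by $(\partial\sigma/\partial u_i)(\nabla h^N(s,\theta))$ where $\theta=x/N$. I would establish this in the usual way, via an entropy-production bound combined with a logarithmic Sobolev inequality on mesoscopic cubes, and identify the limit through the characterization in Section~\ref{sec-stationary} (together with \cite{N02a}) of translation-invariant stationary measures for \eqref{SDE1a} on $\integer^d$ as tilt-parametrised $\nabla\phi$-Gibbs measures, for which the expectation of $V'$ along $e_i$ is exactly $\partial\sigma/\partial u_i$. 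Blocks touching $\partial D_N$ are handled using Assumption~\ref{ass-domain} together with an a priori energy bound showing that a narrow boundary strip contributes negligibly to $A^N(s,J)$.

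Combining these two ingredients with standard tightness in $C([0,T];H^1(D)^*)$, based on a uniform bound on $\sup_{t\le T}E\|h^N(t)\|_{H^1(D)^*}^2$ obtained from the energy inequality for \eqref{SDE1}, shows that every subsequential limit of the laws of $h^N$ is supported on weak solutions of \eqref{PDE1} with initial datum $h_0$. Since Section~\ref{sec-PDE} supplies uniqueness for \eqref{PDE1} in this class, the whole sequence converges to the unique weak solution $h$, yielding \eqref{eq1.7b}. The main technical obstacle I expect is precisely the local replacement step in the present fourth-order, conservative setting: the spectral gap of the restricted Kawasaki-type dynamics on a cube of side $\ell$ decays like $\ell^{-4}$ rather than $\ell^{-2}$, which tightens the admissible range of mesoscopic scales, and moreover the replacement must be made uniform up to blocks close to $\partial D_N$, where the Dirichlet condition interacts non-trivially with the local ergodic theorem. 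Balancing these constraints against the entropy budget afforded by the $N^4$ time scaling is where the delicate work lies.
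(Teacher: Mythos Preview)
Your proposal takes a genuinely different route from the paper. You outline a GPV-style scheme: martingale decomposition of $\langle h^N(t),J\rangle$, a one-block/two-block local replacement of $V'(\nabla\phi)$ by $\partial_i\sigma$ of a mesoscopic gradient average via entropy production and log-Sobolev, then tightness plus the uniqueness of Section~\ref{sec-PDE}. The paper instead follows the Funaki--Spohn $H^{-1}$ method: it constructs the solution $\bar h^{N,\delta}$ of a \emph{discretized, regularized} version of \eqref{PDE1} (Section~\ref{sec-PDE}), applies It\^o's formula directly to $\|h^N(t)-\bar h^{N,\delta}(t)\|_{-1,N}^2$, and rewrites the resulting drift as an integral against a coupled measure $p^{N,\delta}$ on $\mathcal X\times\real^d$ pairing $\nabla\phi^N_s$ with the deterministic tilt $\nabla^N\bar h^{N,\delta}(s)$. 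Limit points of $p^{N,\delta}$ are identified as mixtures of ergodic $\nabla\phi$-Gibbs measures via Theorem~\ref{th_stationary}, under which the integrand vanishes; this yields \eqref{eq4.4} directly. No one-block/two-block estimate is ever proved.

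What the paper's approach buys is that it sidesteps exactly the obstacle you flag: the $\ell^{-4}$ spectral gap of the conservative block dynamics never enters, because there is no entropy--Dirichlet-form trade-off on mesoscopic boxes. The price is the substantial deterministic analysis of Section~\ref{sec-PDE} (uniform $L^p$ bounds on $\nabla^N\bar h^{N,\delta}$, the oscillation estimate Proposition~\ref{oscillation}, and bounds on $k^{N,\delta}$), all of which are needed to establish tightness and translation invariance of $p^{N,\delta}$ and uniform integrability against it. Your approach, if it could be carried through, would avoid that PDE work but would require a genuine two-block estimate for the fourth-order conservative $\nabla\phi$ dynamics with Dirichlet boundary, which to my knowledge is not available in the literature; the unbounded-spin nature of the model already makes the GPV scheme delicate even in the non-conservative case, and here you would additionally need the replacement uniformly up to the boundary. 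So your outline is a plausible alternative programme, but the step you yourself identify as ``where the delicate work lies'' is precisely what the paper's method is designed to circumvent.
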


\section{The macroscopic equation and its discretization}\label{sec-PDE}
In this section, we shall focus our attention on the limit equation
\eqref{PDE1} and its discretized version.
The arguments in this section 
highly depend on the properties of the surface tension
$\sigma$ established in \cite{DGI00} and \cite{FS97}.


\subsection{A subspace of $H^1(D)^*$}\label{sec-sobolev}
As we will see, our consideration is on a subspace of $H^{1}(D)^*$.
Before starting discussions, we introduce it with a suitable norm.

We first note that the solution of \eqref{PDE1} should satisfy
\[
	\int_D h(t,\theta)\,d\theta=
	\int_D h_0(\theta)\,d\theta,\quad t\ge0,
\]
which means that the actual state space for \eqref{PDE1} is not
the whole of $H^1(D)^*$. For convenience, let us mainly consider
the time evolution on the tangential space. 
%
Indeed, the equation \eqref{PDE1} will be solved at
\[
H:=\{h\in H^1(D)^*;\,\langle h,1\rangle=0\}.
\]

	On $H$ defined above, we shall introduce another norm which 
	is equivalent to the standard norm $\|\cdot\|_{H^1(D)^*}$
	similarly to \cite{BE91}.
	To do so, we at first introduce the Green operator $G$ for the Laplacian
	with the Neumann boundary condition.
	For $f\in H$, we denote the unique solution $g\in H^1(D)$
	of the elliptic equation
	\begin{gather}
		(\nabla g, \nabla J)_{L^2(D)^d}
		= \langle f, J\rangle,\quad J\in H^1(D), 
		\label{def-green1}\\
		(g,1)_{L^2(D)}=0
		\label{def-green2}
	\end{gather}
	by $Gf$, where $L^2(D)^d$ is the $d$-fold direct product of $L^2(D)$
	and $(\cdot,\cdot)_{L^2(D)^d}$ is its inner product.
	Here, the existence of $Gf$ follows from 
	the Lax-Milgram theorem and the Poincar\'e inequality
	of the following form: 
	there exists a constant $c_D>0$ such that
	\begin{equation}\label{poincare}
	\|f\|^2_{L^2(D)}\le c_D\|\nabla f\|_{L^2(D)^d}^2,\quad f\in H^1(D),\,
	(f,1)_{L^2(D)}=0,
	\end{equation}
	see Section~5.8.1 of \cite{E98} for details.
	We remark that
	the operator $G:H\to H^{1,\mathrm{zm}}(D)$ is bounded and bijective,
	where $H^{1,\mathrm{zm}}(D)$ is defined by
	\[H^{1,\mathrm{zm}}(D):=\{h\in H^1(D);\,(h,1)_{L^2(D)}=0\}.\]
	We also remark that the inverse of $G$ coincides
	with the Laplacian $-\Delta$ restricted to $H^{1,\mathrm{zm}}(D)$.
	
	Let us define the bilinear form $(\cdot,\cdot)_H$ on $H$ by
	\[(h_1, h_2)_H=(\nabla Gh_1,\nabla Gh_2)_{L^2(D)^d},\quad h_1,h_2\in H.\]
	The following proposition tells us 
	that $\|f\|_H:=(f, f)_H^{1/2}$ on $H$ is equivalent
	to $\|f\|_{H^1(D)^*}$ restricted to the closed subspace $H$ 
	of $H^1(D)^*$, and therefore 
	$H$ is a Hilbert space with the inner product $(\cdot,\cdot)_H$.
	\begin{prop}
		There exist two constant $C_1,C_2>0$ such that
		\[C_1\|f\|_{H^1(D)^*}\le\|f\|_H\le
		C_2\|f\|_{H^1(D)^*},\quad f\in H.\] 
	\end{prop}
	\begin{proof}
		
		We shall at first find a constant $C_1>0$ such that
		\begin{equation}\label{ineq-equiv1}
			C_1\|f\|_{H^1(D)^*}\le\|f\|_H,\quad f\in H.
		\end{equation}
		By the definition of the Green operator $G$, we obtain
		\begin{align*}
			\left|\langle f,J\rangle\right|
			&=\left|(\nabla Gf,\nabla J)_{L^2(D)^d}\right| \\
			&\le \|\nabla Gf\|_{L^2(D)^d}\|\nabla J\|_{L^2(D)^d} \\
			&\le (f, f)_H^{1/2}\|J\|_{H^1(D)}
		\end{align*}
		for every $J\in H^1(D)$, by using the Schwarz inequality.
		This implies that \eqref{ineq-equiv1} with $C_1=1$ holds.
		
		We shall next show 
		\begin{equation}\label{ineq-equiv2}
			\|f\|_H\le C_2\|f\|_{H^1(D)^*},\quad f\in H
		\end{equation}
		with some constant $C_2>0$.
		Since \eqref{ineq-equiv2} trivially holds with any
		constant $C_2>0$ when $\|f\|_H=0$,
		we assume $\|f\|_H>0$. 
		Since $\|Gf\|_{H^1(D)}\ge \|f\|_H$ by the definition of
		$\|\cdot\|_H$, we have $\|Gf\|_{H^1(D)}>0$.
		Taking $J=Gf/\|Gf\|_{H^1(D)}$, we obtain
		\begin{align*}
			\langle f,J\rangle
			&=\frac{\|f\|_H^2}{\|Gf\|_{H^1(D)}}
		\end{align*}
		by \eqref{def-green1}. Since we have
		\[\|Gf\|_{H^1(D)}\le \sqrt{1+c_D}\|\nabla Gf\|_{L^2(D)^d}
		=\sqrt{1+c_D}\|f\|_{H}\]
		by the Poincar\'e inequality \eqref{poincare}, we get
		\begin{align*}
			\langle f,J\rangle
			&\ge \left(1+c_D\right)^{-1/2}\|f\|_H.
		\end{align*}		
		By the definition of $\|\cdot\|_{H^1(D)^*}$, we conclude
		\[\|f\|_{H^1(D)^*}\ge \left(1+c_D\right)^{-1/2}\|f\|_H,\]
		which implies \eqref{ineq-equiv2} with $C_2=(1+c_D)^{1/2}$.
	\end{proof}


\subsection{Precise formulation for the macroscopic equation}
We give the precise meaning of the solution of \eqref{PDE1}.
%
Let us introduce a triplet $V\subset H\simeq H^*\subset V^*$
with $H$ introduced in Section~\ref{sec-sobolev},
\[
V=\left\{h\in H_0^1(D);\, \int_D h(\theta)\,d\theta=0\right\}
\]
and the topological dual $V^*$. 
We remark that $V$ is a Hilbert space with the restriction of
the inner product of $H_0^1(D)$ to $V$.
We denote the restriction of the standard norm of $H_0^1(D)$
to $V$ and the dual norm on $V^*$ by
$\|\cdot\|_V$ and $\|\cdot\|_{V^*}$, respectively.
We denote the duality relation between $V^*$ and $V$
by ${}_{V^*}\langle \cdot, \cdot\rangle_{V}$, which satisfies
\[
	{}_{V^*}\langle f, g\rangle_{V}=(f,g)_{H}
\]
for $f\in H$ and $g\in V$. 
Under these settings, we obtain
	\[{}_{V^*}\langle -\Delta f, g\rangle_{V}
	=(-\Delta f, g)_{H}=(f,g)_{L^2(D)}
	={}_{H^1(D)^*}\langle f,g\rangle_{H^1(D)}
	\]
for $f\in H^{1,\mathrm{zm}}(D)$ and $g\in V$,
which indicates that 
$-\Delta$ 
can be extended to a bounded operator from $H$ to $V^*$.
Furthermore, 
we obtain
\begin{equation*} 
	{}_{V^*}\langle -\Delta f, g\rangle_{V}
	=-(u,\nabla g)_{L^2(D)^d}
\end{equation*}
for $g\in H_0^1(D)$ and $f\in C^\infty(D)\cap H$
such that $f=\dive u-(\dive u,1)_{L^2(D)}$ with $u\in C^\infty(D)^d$,
where $C^\infty(D)^d$ is the $d$-fold direct product of $C^\infty(D)$.
Since we have $-\Delta f=-\Delta \dive u$, we finally obtain
\begin{equation}\label{eq2.31}
	{}_{V^*}\langle -\Delta \dive u, g\rangle_{V}
	=-(u,\nabla g)_{L^2(D)^d}
\end{equation}
for $g\in H_0^1(D)$ and $u\in C^\infty(D)^d$.
We can therefore regard the map 
$u\mapsto -\Delta \dive u$ as a bounded operator
from $L^2(D)^d$ to $V^*$
and the identity \eqref{eq2.31} is still valid for $u\in L^2(D)^d$. 
Noting these facts, let us introduce the nonlinear fourth order
differential operator $A_f:V\to V^*$ by
\[
	A_f(h):=-\Delta \dive \left((\nabla\sigma)(\nabla h+\nabla f)\right),\quad h\in V
\]
for a given $f\in H_0^1(D)$. 
We recall that the surface tension $\sigma:\real^d\to\real$ is
$C^1$-class and it satisfies
\begin{equation}\label{eq3.1a}
c_{-}|u-v|^2\le
\left(\nabla\sigma(u)-\nabla\sigma(v)\right)\cdot(u-v)
\le c_{+}|u-v|^2,\quad u,v\in\real^d
\end{equation}
with $c_\pm$ appearing in Assumption~\ref{ass-potential}; see \cite{F05}.

\begin{define}\label{def-PDE}
A function $h=h(t,\theta)$ is called 
the solution of \eqref{PDE1} with initial 
data $h_0\in H^{1}(D)^*$ if there exists a function $f\in H_0^1(D)$ with
\begin{equation}\label{cond-aux-func}
	\langle h_0,1\rangle=
	\langle f,1\rangle
\end{equation}
such that the function $h_f:=h-f$ satisfies the following three
conditions:
\begin{enumerate}
	\item $h_f:[0,T]\to V^*$ is absolutely continuous and
	\[h_f\in C([0,T],H)\cap L^2([0,T],V)\cap W^{1,2}([0,T],V^*).\]
	\item $h_f(0)=h_0-f.$
	\item 
	$h_f$ satisfies
	\begin{equation*}
		\frac{d}{dt}h_f=A_f(h_f(t))
	\end{equation*}
	in $V^*$ for almost every $t\in[0,T]$.
\end{enumerate}
\end{define}
\begin{remark}
\begin{enumerate}
	\item We adopt the weak formulation for \eqref{PDE1}.
	For the solution $h$ of \eqref{PDE1},
	the choice of $V$ 
	corresponds to the Dirichlet boundary condition of $h$.
	Furthermore, $\dive \left((\nabla\sigma)(\nabla h)\right)$
	is formally given by $G\partial h/\partial t$, and therefore
	$\dive \left((\nabla\sigma)(\nabla h+\nabla f)\right)$ should
	satisfy the Neumann boundary condition.
	\item The third condition is equivalent to
	\[h_f(t)=h_f(0)+\int_0^t A_f(h_f(s))\,ds\]
	in $V^*$ for almost every $t\in[0,T]$. 
	The above indicates that a solution $h$ itself satisfies
	\[h(t)-f=h(0)-f+\int_0^t A(h(s))\,ds\]
	in $V^*$. Here, $A$ is the nonlinear fourth order differential
	operator defined by
	\[
		A(h):=-\Delta \dive \left((\nabla\sigma)(\nabla h)\right),
	\]
	which is appearing in the right hand side of \eqref{PDE1}.
\end{enumerate}
\end{remark}

The first aim of this section is to show the existence and uniqueness
for our equation.
\begin{theorem}\label{thm-exist-uniq-PDE}
	For every initial data $h_0\in H^{1}(D)^*$, there exists a unique solution of 
	\eqref{PDE1}.
\end{theorem}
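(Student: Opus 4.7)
The natural strategy is to realize Definition~\ref{def-PDE} as an abstract parabolic problem on the Gelfand triple $V\subset H\subset V^*$ and to apply the standard theory of monotone evolution equations (Lions' or Brezis' theorem). The point of the auxiliary function $f$ is that the unknown $h$ itself does not lie in $V$ (because its mean is a prescribed nonzero constant and it need not vanish on $\partial D$), so we subtract any fixed $f\in H_0^1(D)$ with $\langle f,1\rangle=\langle h_0,1\rangle$ (such an $f$ clearly exists, for instance a scalar multiple of a fixed smooth bump), and then $h_f:=h-f$ does lie in $H$ initially and is expected to evolve in $V$.

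For fixed $f$ I would interpret $A_f\colon V\to V^*$ via the natural weak pairing
\[
{}_{V^*}\!\langle A_f(h),\psi\rangle_{V}
=-\int_D (\nabla\sigma)(\nabla h+\nabla f)\cdot\nabla\psi\,d\theta,
\qquad \psi\in V,
\]
which is consistent with the formal expression $-\Delta\dive((\nabla\sigma)(\nabla h+\nabla f))$ after integration by parts and is well defined because \eqref{eq3.1a} gives linear growth of $\nabla\sigma$. Then I would check the four standard hypotheses: (i) boundedness $\|A_f(h)\|_{V^*}\le C(1+\|h\|_V)$ from the linear bound on $\nabla\sigma$; (ii) hemi-continuity, which follows from the continuity of $\nabla\sigma$ and dominated convergence; (iii) strong monotonicity of $-A_f$, since \eqref{eq3.1a} yields
\[
-{}_{V^*}\!\langle A_f(h_1)-A_f(h_2),h_1-h_2\rangle_V
\ge c_{-}\int_D |\nabla(h_1-h_2)|^2\,d\theta
\ge c\,\|h_1-h_2\|_V^2,
\]
by Poincar\'e on $V\subset H_0^1(D)$; and (iv) coercivity of $-A_f$, which follows from (iii) applied between $h$ and $0$ together with $\|A_f(0)\|_{V^*}<\infty$. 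Lions' theorem then produces a unique $h_f\in C([0,T];H)\cap L^2(0,T;V)\cap W^{1,2}(0,T;V^*)$ with $h_f(0)=h_0-f$ satisfying the evolution equation in $V^*$, which gives existence of a solution of \eqref{PDE1} in the sense of Definition~\ref{def-PDE}.

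For uniqueness two things must be shown: uniqueness of $h_f$ once $f$ is fixed (immediate from the strong monotonicity above by the standard Gronwall argument on $\tfrac12\tfrac{d}{dt}\|h_{f}^{(1)}-h_{f}^{(2)}\|_H^2$), and independence of the reconstructed function $h=h_f+f$ from the choice of $f$. For the latter I would take two admissible choices $f_1,f_2$ with corresponding solutions $h_{f_1},h_{f_2}$, set $w_i=h_{f_i}+f_i$, and observe that $k:=w_1-w_2=(h_{f_1}-h_{f_2})+(f_1-f_2)$ has zero mean and zero trace, hence $k\in V$, while its time derivative equals $A_0(w_1)-A_0(w_2)$ in $V^*$ (the $f_i$ disappear because only $\nabla w_i=\nabla h_{f_i}+\nabla f_i$ enters $\nabla\sigma$). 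Testing against $k$ and using monotonicity gives $\tfrac{d}{dt}\|k\|_H^2\le 0$ with $k(0)=0$, hence $w_1\equiv w_2$.

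The main obstacle is less the analysis than the bookkeeping of the functional-analytic setup: one must verify carefully that the weak formulation chosen for $A_f$ is the correct $V^*$-valued expression of the formal fourth-order operator (so that the Gelfand-triple identity $\tfrac12\tfrac{d}{dt}\|h_f\|_H^2={}_{V^*}\!\langle \partial_t h_f,h_f\rangle_V$ really applies), that the $V$-norm is equivalent to $\|\nabla\cdot\|_{L^2}$ on $V$ so that the monotonicity estimate yields genuine $V$-coercivity, and that the two auxiliary decompositions $h=h_f+f$ are compared in a space where both $h_{f_i}\in V$ and $f_1-f_2\in V$, so that the uniqueness computation above can really be carried out inside the Gelfand triple. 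Once these points are in place, the argument is a routine application of the Browder--Minty / Lions machinery.
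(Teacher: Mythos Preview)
Your proposal is correct and follows essentially the same route as the paper: the paper packages your items (i)--(iv) into Lemma~\ref{lem2.13} (monotonicity, demicontinuity, coercivity, boundedness of $A_f$ on $V\subset H\subset V^*$) and then invokes a standard monotone-operator existence theorem (Theorem~4.10 of \cite{B10}) exactly as you suggest. The only cosmetic difference is in the uniqueness-with-respect-to-$f$ step: rather than testing the difference $k=w_1-w_2$ against itself, the paper observes the algebraic identity $A_{f_1}(h)=A_{f_2}(h-f_2+f_1)$, so that $h^{(2)}-f_1$ is itself a solution of the $f_1$-problem, and then appeals to the already-established uniqueness for fixed $f$; your direct monotonicity computation achieves the same conclusion.
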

In order to apply a general theory of nonlinear partial differential
equations, let us prepare the following lemma:
\begin{lemma}\label{lem2.13}
	For every
	$f\in H_0^1(D)$,
	the nonlinear operator $A_f:V\to V^*$ satisfies the following:
	\begin{enumerate}
		\item $A_f$ is monotone, that is, 
		\begin{equation*}\label{monotonicity}
		{}_{V^*}\langle A_f(h_1)-A_f(h_2), h_1-h_2\rangle_{V}\le 0,\quad h_1,h_2\in V.
		\end{equation*}
		holds.
		\item $A_f$ is demicontinuous, that is, the map $A_f:V\to V^*$
		is continuous under the weak topology of $V^*$.
		\item There exist constants $C_1,C_2,C_3>0$ such that
		\begin{gather}
			{}_{V^*}\langle A_f(h), h\rangle_{V}\le -C_1\|h\|_V^2+C_2,\quad h\in V,\label{eq2.29}\\
			\|A_f(h)\|_{V^*}\le C_3(\|h\|_V+1),\quad h\in V \label{eq2.30}
		\end{gather}
		holds.
	\end{enumerate}
\end{lemma}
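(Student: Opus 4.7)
The strategy is to rewrite the $V^*$--$V$ pairing of $A_f$ in an explicit form, after which all three properties reduce to the two-sided bound \eqref{eq3.1a} on $\nabla\sigma$. The key step is the pairing identity
\begin{equation*}
{}_{V^*}\langle A_f(h), w\rangle_V = -\int_D \nabla\sigma(\nabla h + \nabla f) \cdot \nabla w\, d\theta, \quad h, w \in V.
\end{equation*}
To derive this I would use that the $V^*$--$V$ pairing extends the $H$-inner product $(\cdot,\cdot)_H = \int_D (\cdot)\, G(\cdot)\, d\theta$, apply the identity $(-\Delta)Gw = w$ (the Neumann boundary term for $Gw$ vanishes by definition of $G$), and then integrate by parts to move $\dive$ onto $w$ (the Dirichlet boundary term vanishes because $w \in H_0^1(D)$). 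The right-hand side is well-defined whenever $h, w \in V$ and $f \in H_0^1(D)$ by the linear growth of $\nabla\sigma$ implied by \eqref{eq3.1a}, so it may equally well be adopted as the definition of $A_f(h) \in V^*$.

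With this identity in hand, monotonicity is immediate: putting $w = h_1 - h_2$ and setting $u_i := \nabla h_i + \nabla f$ gives
\begin{equation*}
{}_{V^*}\langle A_f(h_1) - A_f(h_2), h_1 - h_2\rangle_V = -\int_D \bigl(\nabla\sigma(u_1) - \nabla\sigma(u_2)\bigr)\cdot (u_1 - u_2)\, d\theta \le -c_-\|\nabla(h_1 - h_2)\|_{L^2(D)}^2 \le 0
\end{equation*}
by the lower bound in \eqref{eq3.1a}. For demicontinuity, the upper bound in \eqref{eq3.1a} gives the Lipschitz estimate $|\nabla\sigma(u) - \nabla\sigma(u')| \le c_+|u - u'|$, so $h_n \to h$ in $V$ implies $\nabla\sigma(\nabla h_n + \nabla f) \to \nabla\sigma(\nabla h + \nabla f)$ in $L^2(D)^d$; the pairing identity then yields strong, hence weak, convergence $A_f(h_n) \to A_f(h)$ in $V^*$.

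For (3), taking $w = h$ in the pairing identity and decomposing
\begin{equation*}
\nabla\sigma(\nabla h + \nabla f) \cdot \nabla h = \bigl[\nabla\sigma(\nabla h + \nabla f) - \nabla\sigma(\nabla f)\bigr]\cdot \nabla h + \nabla\sigma(\nabla f) \cdot \nabla h,
\end{equation*}
the first summand is pointwise bounded below by $c_-|\nabla h|^2$ by \eqref{eq3.1a}, while the second is controlled by Young's inequality so as to absorb half of the coercive contribution; after integration this produces ${}_{V^*}\langle A_f(h), h\rangle_V \le -(c_-/2)\|\nabla h\|_{L^2(D)}^2 + C(f)$, which becomes \eqref{eq2.29} after applying Poincar\'e's inequality on $V \subset H_0^1(D)$. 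The growth bound \eqref{eq2.30} follows immediately from Cauchy--Schwarz in the pairing identity together with the linear growth $|\nabla\sigma(u)| \le c_+|u| + |\nabla\sigma(0)|$ also implied by \eqref{eq3.1a}.

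The only genuinely delicate point is the justification of the pairing identity itself, since the Green operator $G$ encodes the Neumann boundary condition while the test functions $w \in V$ satisfy the Dirichlet condition; one must verify that the two integrations by parts leave no residual boundary terms. Once this is done---or once the weak form is simply adopted as the definition of $A_f$ on $V$---the remainder is a routine verification of the standard hypotheses for the Browder--Minty theory of monotone evolution operators, with constants depending only on $c_\pm$ and $\|f\|_{H_0^1(D)}$.
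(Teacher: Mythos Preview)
Your proof is correct and follows essentially the same route as the paper: both establish the pairing identity ${}_{V^*}\langle A_f(h), g\rangle_V = -(\nabla\sigma(\nabla h + \nabla f), \nabla g)_{L^2(D)}$ and then read off monotonicity, demicontinuity, and the growth/coercivity bounds directly from \eqref{eq3.1a}. The only minor difference is in the coercivity estimate \eqref{eq2.29}: you add and subtract $\nabla\sigma(\nabla f)$ so that \eqref{eq3.1a} yields $c_-|\nabla h|^2$ immediately, whereas the paper instead writes $\nabla h = (\nabla h + \nabla f) - \nabla f$, bounds $(\nabla\sigma(u),u)\ge c_-|u|^2$ with $u=\nabla h+\nabla f$, and then needs the extra inequality $\|\nabla h+\nabla f\|^2\ge\tfrac12\|\nabla h\|^2-\|\nabla f\|^2$; your decomposition is a touch cleaner but the argument is otherwise the same.
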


\begin{proof}
	Using \eqref{eq2.31}, we obtain
	\begin{equation}\label{eq2.32a}
		{}_{V^*}\langle A_f(h), g\rangle_{V}
		=-((\nabla\sigma)(\nabla h+\nabla f), \nabla g)_{L^2(D)^d}
	\end{equation}
	for $h,g\in V$, and also obtain
	\begin{align}\label{eq2.32}
		&{}_{V^*}\langle A_f(h_1)-A_f(h_2), g\rangle_{V} \nonumber \\
		&\qquad {}=-((\nabla\sigma)(\nabla h_1+\nabla f)
		-(\nabla\sigma)(\nabla h_2+\nabla f),\nabla g)_{L^2(D)^d}
	\end{align}
	for $h_1,h_2,g\in V$. 
	It is easy to see (1), by applying \eqref{eq2.32} with $g=h_1-h_2$
	and using the convexity of $\sigma$, see \eqref{eq3.1a}.
	We can also obtain (2), since we have
	\begin{align*}
		&\left| {}_{V^*}\langle A_f(h_1)-A_f(h_2), g\rangle_{V}\right|
		 \\ 
		&\qquad \le c_{+}\|\nabla h_1-\nabla h_2\|_{L^2(D)^d}
		\|\nabla g\|_{L^2(D)^d},\quad
		h_1,h_2,g\in V
	\end{align*}
	from \eqref{eq3.1a} and \eqref{eq2.32} again.
	Moreover, we obtain \eqref{eq2.30} of (3), because
	the relationship \eqref{eq2.32a} implies 
	\[\|A_f(h)\|_{V^*}\le \|(\nabla\sigma)(\nabla h+\nabla f)\|_{L^2(D)^d}
	\le c_{+}\|h\|_{V}+c_{+}\|\nabla f\|_{L^2(D)^d}\]
	for $h\in V$.
	
	As the final step, we shall show \eqref{eq2.29} of (3).
	Using \eqref{eq2.32a}, we obtain
	\begin{align*}
		{}_{V^*}\langle A_f(h), h\rangle_{V}
		&=-((\nabla\sigma)(\nabla h+\nabla f), \nabla h+\nabla f)_{L^2(D)^d}\\
		&\qquad {}+((\nabla\sigma)(\nabla h+\nabla f), \nabla f)_{L^2(D)^d} \\
		&\le -c_{-}\|\nabla h+\nabla f\|^2_{L^2(D)^d}
		+c_{+}\|\nabla h+\nabla f\|_{L^2(D)^d}\|\nabla f\|_{L^2(D)^d} \\
		&\le -\frac{1}{2}c_{-}\|\nabla h\|^2_{L^2(D)^d}
		+(c_{+}+c_{-})\|\nabla f\|^2_{L^2(D)^d} \\
		&\qquad {}+c_{+}\|\nabla h\|_{L^2(D)^d}\|\nabla f\|_{L^2(D)^d}
	\end{align*}
	from \eqref{eq3.1a}. Here, noting
	\[\|\nabla h\|_{L^2(D)^d}\|\nabla f\|_{L^2(D)^d}
	\le \frac{c_{-}}{4c_{+}}\|\nabla h\|^2_{L^2(D)^d}+
	\frac{c_{+}}{c_{-}}\|\nabla f\|^2_{L^2(D)^d},\]
	we obtain	
	\begin{align*}
		{}_{V^*}\langle A_f(h), h\rangle_{V}
%
		&\le -\frac{1}{4}c_{-}\|\nabla h\|^2_{L^2(D)^d}
		+\left(\frac{c_{+}^2}{c_{-}}+c_++c_{-}\right)\|\nabla f\|^2_{L^2(D)^d}.
	\end{align*}
	Since we have
	\[\|h\|^2_{V}\le (1+c_D)\|\nabla h\|^2_{L^2(D)^d}\]
	from \eqref{poincare}, we conclude the desired bound \eqref{eq2.29}.
\end{proof}
\begin{proof}[Proof of Theorem~\ref{thm-exist-uniq-PDE}]
Using Theorem~4.10 of \cite{B10} and Lemma~\ref{lem2.13},
for every initial data $h_0\in H^1(D)^*$ and every auxiliary
function $f\in H_0^1(D)$ satisfying \eqref{cond-aux-func},
we obtain the existence and uniqueness of $h_f$ satisfying
conditions (1)-(3) in Definition~\ref{def-PDE}.
Especially, this shows the existence of the solution of \eqref{PDE1}
in the sense of Definition~\ref{def-PDE}.

Let us show the uniqueness of of the solution of \eqref{PDE1}
in the sense of Definition~\ref{def-PDE}.
Take a two solution $h^{(1)}$ and $h^{(2)}$
with a common initial data $h_0$, and let $f_1$ and $f_2$
be auxiliary functions associated to $h^{(1)}$ and $h^{(2)}$, respectively.
Noting 
\[
	A_{f_1}(h)=A_{f_2}(h-f_2+f_1),\quad h\in V,
\]
$h^{(2)}-f_1=h^{(2)}-f_2+f_2-f_1$ satisfies
conditions (1)-(3) in Definition~\ref{def-PDE} with $f_1$.
The uniqueness of $h_f$ for given $f$
implies $h^{(2)}-f_1=h^{(1)}-f_1$, which shows $h^{(1)}=h^{(2)}$.
\end{proof}

We can also obtain the following
by a similar argument to the proof of Theorem~\ref{thm-exist-uniq-PDE}.
\begin{prop}\label{prop2.13b}
	If $h(t)$ is the solution of \eqref{PDE1} with
	initial data $h_0$,
	then $h_f=h-f$ satisfies conditions (1)-(3)
	in Definition~\ref{def-PDE} for every $f\in H_0^1(D)$
	with \eqref{cond-aux-func}. 
	In other words,
	we can choose an auxiliary function $f\in H_0^1(D)$ arbitrarily
	as long as it satisfies \eqref{cond-aux-func}.
%
\end{prop}

\subsection{Regularization for the macroscopic equation}
We shall introduce
the regularization of $\sigma$ and the coefficient of 
the corresponding partial differential
equation \eqref{PDE1}, which plays a key role in the proof of Theorem~\ref{hydro}.
Note that such regularization becomes unnecessary once one can show
$C^2$-regularity of $\sigma$, which remains open at present. 

Let $\rho\in C_0^\infty(\real^d)$ be a non-negative and
symmetric function such that its support is in $
\{u\in\real^d;|u|<1\}$ and $\int_{\real^d}\rho(u)\,du=1$.
For $0<\delta\le 1$, we define $\rho_\delta$ by
\[\rho_\delta=\delta^{-d}\rho(\delta^{-1}u),\quad u\in\real^d\]
and the regularized surface tension $\sigma^\delta$ by
the mollification of $\sigma$:
\[\sigma^\delta(u)=\sigma*\rho_\delta(u),\quad u\in\real^d.\]
Note that the regularized surface tension $\sigma^\delta$
again satisfies the bound \eqref{eq3.1a}, that is,
\begin{equation}
	c_{-}|u-v|^2\le (\nabla\sigma^\delta(u)-\nabla\sigma^\delta(v))\cdot
	(u-v)\le c_{+}|u-v|^2,\quad u,v\in\real^d \label{convex-reg}
\end{equation}
holds for every $0<\delta\le 1$. Moreover, since $\nabla\sigma$ is Lipschitz continuous, $\sigma^\delta$
approximates $\sigma$ in the following sense:
\begin{equation}
	\left|\nabla\sigma^\delta(u)-\nabla\sigma(u))\right|\le c_{+}\delta,
	\quad u\in\real^d,\delta>0. \label{approx-reg}
\end{equation}
Using $\sigma^\delta$ defined above, let us consider
the nonlinear fourth order differential equation
\begin{equation}\label{PDE1-reg}
	\left\{\begin{split}
	\frac{\partial}{\partial t}h^\delta(t,\theta)&=
	-\Delta \dive\Bigl\{(\nabla\sigma^\delta)(\nabla h(t,\theta))
	\Bigr\},\quad\theta\in D,\,t>0 \\
	h^\delta(t,\theta)&=0,\quad \theta\in D^c,t>0.
	\end{split}\right.
\end{equation}
The equation \eqref{PDE1-reg} can be formulated in a similar way
to Definition~\ref{def-PDE}, and the solution of \eqref{PDE1-reg}
exists uniquely. Note that the proof is 
quite parallel with that of Theorem~\ref{thm-exist-uniq-PDE}
since we have \eqref{convex-reg}.
Furthermore, we get the following proposition, which tells us
that the solution of \eqref{PDE1} 
can be approximated by the solution of \eqref{PDE1-reg}.
\begin{prop}\label{prop-PDE-approx}
	Let $h$ and $h^\delta$ be the solutions of \eqref{PDE1} and
	\eqref{PDE1-reg}, respectively. If their initial data
	are common, we have
	\[\lim_{\delta\to 0}\|h(t)-h^\delta(t)\|_{H^1(D)^*}=0.\]
\end{prop}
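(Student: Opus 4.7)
The plan is to derive a Gronwall-type differential inequality for the squared $H$-norm of the difference $w^\delta:=h-h^\delta$, exploiting the uniform convexity of $\sigma$ (inherited by $\sigma^\delta$ via \eqref{convex-reg}) together with the pointwise approximation \eqref{approx-reg}. Because $h$ and $h^\delta$ share the initial datum $h_0\in H^1(D)^*$, Definition~\ref{def-PDE} lets me choose a \emph{common} auxiliary function $f\in H_0^1(D)$ with $\langle h_0,1\rangle=\langle f,1\rangle$; then $h_f:=h-f$ and $h_f^\delta:=h^\delta-f$ both lie in $C([0,T];H)\cap L^2(0,T;V)\cap W^{1,2}([0,T];V^*)$ with the same initial value $h_0-f$, so $w^\delta=h_f-h_f^\delta$ belongs to $V$ for a.e.\ $t$ and vanishes at $t=0$.

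Writing $A_f^\delta$ for the operator built from $\sigma^\delta$ in place of $\sigma$ and applying the chain rule for the evolution triple $V\subset H\subset V^*$, I split
\begin{align*}
\tfrac{1}{2}\tfrac{d}{dt}\|w^\delta\|_H^2
&= {}_{V^*}\langle A_f(h_f)-A_f(h_f^\delta),\,w^\delta\rangle_V
 + {}_{V^*}\langle A_f(h_f^\delta)-A_f^\delta(h_f^\delta),\,w^\delta\rangle_V.
\end{align*}
For the first piece, applying \eqref{eq2.32} with $g=w^\delta$ and using the sharp lower bound in \eqref{eq3.1a} (the strong form behind Lemma~\ref{lem2.13}(1)) yields the dissipative estimate $\le -c_-\|\nabla w^\delta\|_{L^2(D)}^2$. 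The second piece I rewrite via \eqref{eq2.32a} as
$$-\bigl([(\nabla\sigma)-(\nabla\sigma^\delta)](\nabla h_f^\delta+\nabla f),\,\nabla w^\delta\bigr)_{L^2(D)},$$
which by the uniform approximation \eqref{approx-reg} and Cauchy--Schwarz is controlled by $c_+\delta\,|D|^{1/2}\,\|\nabla w^\delta\|_{L^2(D)}$.

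A routine Young's inequality then absorbs $\|\nabla w^\delta\|_{L^2(D)}$ into the dissipative term, leaving
$$\frac{d}{dt}\|w^\delta(t)\|_H^2 \le C\delta^2$$
for some constant $C=C(c_\pm,|D|)$. Integrating on $[0,t]$ and using $w^\delta(0)=0$ gives $\|w^\delta(t)\|_H^2\le C\delta^2 t$. Since $w^\delta(t)\in H$ (both $h_f(t)$ and $h_f^\delta(t)$ have zero mean), the equivalence of $\|\cdot\|_H$ with the restriction of $\|\cdot\|_{H^1(D)^*}$ to $H$ noted in Section~\ref{sec-sobolev} converts this into the desired conclusion. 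The only technical point to double-check is the validity of the chain rule for $t\mapsto\|w^\delta(t)\|_H^2$, but this is standard once we have $w^\delta\in L^2(0,T;V)\cap W^{1,2}([0,T];V^*)$, so I do not foresee any substantive obstacle.
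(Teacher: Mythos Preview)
Your argument is correct and follows the same energy-method template as the paper: differentiate $\|h-h^\delta\|_H^2$, split the right-hand side into a monotonicity piece and an approximation-error piece, and control the latter via \eqref{approx-reg}. The one substantive difference is in how you dispatch the cross term. The paper simply drops the dissipative contribution (using only that it is nonpositive) and is then left with an integral of $c_+\delta\int_D|\nabla h-\nabla h^\delta|$; to bound this it first proves the separate uniform a~priori estimate \eqref{PDE-unif-bound}, namely $\sup_{\delta>0}\int_0^t\|h^\delta(s)\|_{H^1(D)}^2\,ds<\infty$. You instead retain the $-c_-\|\nabla w^\delta\|_{L^2}^2$ term and absorb the cross term by Young's inequality, which yields the explicit rate $\|w^\delta(t)\|_H^2\le C\delta^2 t$ without any auxiliary uniform bound. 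Your route is slightly cleaner and more quantitative; the paper's route has the side benefit of recording \eqref{PDE-unif-bound}, though that estimate is not used elsewhere.
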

\begin{proof}
	Let $h_0\in H^1(D)^*$ be the initial data of $h$ and $h^\delta$.
	We take $f\in H_0^1(D)$ satisfying
	$\langle f,1\rangle=\langle h_0,1\rangle$.
	By Definition~\ref{def-PDE} and Proposition~\ref{prop2.13b}, we obtain
	{\allowdisplaybreaks
	\begin{align*}
		\|h^\delta(t)-f\|_{H}^2
		&=\|h_0-f\|_{H}^2 \\
		&\qquad {}-2\int_0^t ( 
		\nabla\sigma^\delta(\nabla h^\delta(s))-\nabla\sigma^\delta(\nabla f),
		\nabla h^\delta(s)-\nabla f)_{L^2(D)^d}\,ds \\
		&\qquad {}-2\int_0^t ( 
		\nabla\sigma^\delta(\nabla f),
		\nabla h^\delta(s)-\nabla f)_{L^2(D)^d}\,ds \\
		&\le \|h_0-f\|_{H}^2-2c_{-}\int_0^t  
		\left\|\nabla h^\delta(s)-\nabla f\right\|^2_{L^2(D)^d}\,ds \\
		&\qquad {}+2\int_0^t 
		\|\nabla\sigma^\delta(\nabla f)\|_{L^2(D)^d}
		\|\nabla h^\delta(s)-\nabla f\|_{L^2(D)^d}\,ds \\
		&\le \|h_0-f\|_{H}^2-c_{-}\int_0^t  
		\left\|\nabla h^\delta(s)-\nabla f\right\|^2_{L^2(D)^d}\,ds \\
		&\qquad {}+c_{-}^{-1}\int_0^t 
		\|\nabla\sigma^\delta(\nabla f)\|^2_{L^2(D)^d}\,ds
	\end{align*}}
	from \eqref{eq2.32a} with $g=h^\delta-f$ and $\sigma$
	replaced by $\sigma^\delta$ and \eqref{convex-reg}. This shows
	\begin{equation}\label{PDE-unif-bound}
		\sup_{\delta>0}\int_0^t\|h^\delta(s)\|_{H^1(D)}^2\,ds<\infty.
	\end{equation}
	By a similar calculation, we obtain 
	\begin{align*}
		\|&h(t)-h^\delta(t)\|_{H}^2 \\
		&=-2\int_0^t ( 
		\nabla\sigma(\nabla h(s)+\nabla f)
		-\nabla\sigma(\nabla h^\delta(s)+\nabla f),
		\nabla h(s)-\nabla h^\delta(s))_{L^2(D)^d}\,ds \\
		&\quad {}-2\int_0^t ( 
		\nabla\sigma(\nabla h^\delta(s)+\nabla f)
		-\nabla\sigma^\delta(\nabla h^\delta(s)+\nabla f),
		\nabla h(s)-\nabla h^\delta(s))_{L^2(D)^d}\,ds \\
		&\le 2\int_0^t c_{+}\delta
		\int_D|\nabla h(s,\theta)-\nabla h^\delta(s,\theta)|\,d\theta\,ds.
	\end{align*}
	Note that we have used \eqref{approx-reg} in the last inequality.
	Combining the above with \eqref{PDE-unif-bound} and $h-f\in L^2([0,T],V)$,
	we get the conclusion.
\end{proof}

We can show the following proposition by a similar argument to the proof of Proposition~\ref{prop-PDE-approx} noting Lemma~\ref{lem2.13}-(1).
\begin{prop}\label{prop-PDE-approx2}
	Let $h$ and $\hat{h}$ be two solutions of \eqref{PDE1}.
	We then have 
	\[\|h(t)-\hat{h}(t)\|_{H}\le \|h(0)-\hat{h}(0)\|_{H},\quad t\ge 0.\]
\end{prop}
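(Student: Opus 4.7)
The plan is to mimic the first half of the proof of Proposition~\ref{prop-PDE-approx}, but instead of comparing a solution of \eqref{PDE1} with one of \eqref{PDE1-reg} we now compare two solutions of \eqref{PDE1} itself, so the extra error term involving $\nabla\sigma^\delta-\nabla\sigma$ disappears and only the monotonicity term \eqref{eq3.1a} remains.

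First I would observe that, since the equation \eqref{PDE1} conserves $\langle h,1\rangle$ (which is built into Definition~\ref{def-PDE}), the quantity $\|h(t)-\hat{h}(t)\|_H$ is only meaningful when $\langle h(0)-\hat{h}(0),1\rangle=0$; in that case I can pick a \emph{common} auxiliary function $f\in H_0^1(D)$ satisfying $\langle f,1\rangle=\langle h(0),1\rangle=\langle \hat{h}(0),1\rangle$, so that both $h_f:=h-f$ and $\hat{h}_f:=\hat{h}-f$ belong to $L^2(0,T;V)\cap W^{1,2}([0,T];V^*)\cap C([0,T];H)$ and both satisfy
\[
\frac{d}{dt}h_f = A_f(h_f),\qquad \frac{d}{dt}\hat{h}_f = A_f(\hat{h}_f)
\]
in $V^*$ for a.e.\ $t$. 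The difference $u:=h_f-\hat{h}_f=h-\hat{h}$ lies in the same regularity class and satisfies $u'=A_f(h_f)-A_f(\hat{h}_f)$ in $V^*$.

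Next, I would apply the standard chain rule for the Gelfand triple $V\subset H\subset V^*$ (the rule that yields $\tfrac{d}{dt}\|u\|_H^2=2\,{}_{V^*}\langle u', u\rangle_V$ whenever $u\in L^2(0,T;V)$ and $u'\in L^2(0,T;V^*)$, cf.\ Theorem~4.10 of \cite{B10}) to obtain
\[
\frac{d}{dt}\|h(t)-\hat{h}(t)\|_H^2
= 2\,{}_{V^*}\!\bigl\langle A_f(h_f(t))-A_f(\hat{h}_f(t)),\, h_f(t)-\hat{h}_f(t)\bigr\rangle_V.
\]
By monotonicity of $A_f$ (Lemma~\ref{lem2.13}(1)), which is itself a direct consequence of the convexity estimate \eqref{eq3.1a}, the right-hand side is nonpositive, so $t\mapsto\|h(t)-\hat{h}(t)\|_H^2$ is nonincreasing and integrating from $0$ to $t$ yields the claimed bound.

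The only genuine obstacle is the bookkeeping in the first step: one must ensure that the two solutions can be compared within the \emph{same} affine subspace of $H^1(D)^*$, i.e.\ that they share a common auxiliary $f$, and that the resulting $u=h_f-\hat{h}_f$ actually inherits the regularity needed to apply the Gelfand-triple chain rule. Once this setup is in place, the rest is an immediate application of monotonicity, exactly parallel to the corresponding step in Proposition~\ref{prop-PDE-approx} but without the regularization error.
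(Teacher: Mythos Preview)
Your proposal is correct and follows exactly the route the paper intends: the paper itself merely states that Proposition~\ref{prop-PDE-approx2} is proved ``by similar argument to the proof of Proposition~\ref{prop-PDE-approx},'' and what you have written is precisely that argument with the $\sigma$-vs-$\sigma^\delta$ error term removed, leaving only the monotonicity of $A_f$ (Lemma~\ref{lem2.13}(1)). Your remark that a common auxiliary $f$ must be chosen---which forces $\langle h(0)-\hat h(0),1\rangle=0$, implicit in the use of $\|\cdot\|_H$---is a useful clarification the paper leaves tacit.
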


\subsection{The discretization for the macroscopic equation}
\label{subsec-discretized-pde}
In order to introduce the discretized equation corresponding to
the regularized macroscopic equation \eqref{PDE1-reg}, let us introduce several notations.
We define the finite difference operators by
{\allowdisplaybreaks
\begin{gather*}
	\nabla^N_i  f(\theta)=N(f(\theta+e_i /N)-f(\theta)), \\
	\nabla^{N,*}_i  f(\theta)=-N(f(\theta)-f(\theta-e_i /N)), \\
	\nabla^N f(\theta)=(\nabla^N_1 f(\theta),\dots,\nabla^N_d f(\theta)), \\
	\dive_N g(\theta)=-\sum_{i =1}^d \nabla^{N,*}_i  g_i (\theta)
\end{gather*}}%
for $f:\real^d\to\real,\,g=(g_i )_{1\le i \le d}:
\real^d\to\real^d,1\le i \le d$ and $\theta\in\real^d$,
where $e_i\in\integer^d$ is the $i$-th unit vector given by $(e_i)_j=\delta_{ij}$ for $1\le i,j\le d$.
We also define the discretized Laplacian with the Neumann boundary condition
by
\begin{align*}
	\Delta_N f(\theta)&=
	N\sum_{i =1}^d \Bigl(
	\nabla^N_i  f(\theta)1_{\theta\in \tilde{D}_N}1_{\theta+e_i/N\in \tilde{D}_N}
	\\
	&\qquad\qquad\qquad {}-\nabla^{N}_i  f(\theta-e_i/N)1_{\theta\in \tilde{D}_N}1_{\theta-e_i/N\in \tilde{D}_N}
	\Bigr)
\end{align*}
for $f:\real^d\to\real$, where the domain $\tilde{D}_N$ is defined by
\[\tilde{D}_N=\bigcup_{x\in D_N}B(x/N,1/N).\]
Note that indicator functions appearing above are corresponding to 
the range of $y$'s where the sum \eqref{discrete-Laplacian} is taken.
With these notations the discretized PDE for \eqref{PDE1-reg} reads
a system of ordinary differential equations
for $\bar{h}^{N,\delta}(t,\theta)$:
\begin{equation}\label{discretized_PDE}
\left\{\begin{split}
	\frac{\partial}{\partial t}\bar{h}^{N,\delta}(t,\theta)&=-\Delta_N k^{N,\delta} \\
	k^{N,\delta}&=A^{N,\delta}(\bar{h}^{N,\delta}(t))(\theta) \\
	&:=\dive_N\bigl\{(\nabla\sigma^\delta)(
	\nabla^N \bar{h}^{N,\delta}(t))\bigr\}(\theta),\quad
	\theta\in \tilde{D}_N, \\
	\bar{h}^{N,\delta}(t,\theta)&=0,\quad \theta\not\in\tilde{D}_N.
\end{split}\right.
\end{equation}
We recall that $\sigma^\delta\in C^\infty(\real^d)$ and it satisfies
\eqref{convex-reg}.
The equation \eqref{discretized_PDE} will be solved with the initial
data given by
\begin{equation}\label{initial_PDE}
	\bar{h}_0^{N}(\theta)=N^d\int_{B(x/N,1/N)}h_0(\theta')\,d\theta',
	\quad \theta\in B(x/N,1/N)\text{ for some }x\in\integer^d,
\end{equation}
where $h_0\in L^2(\real^d)$. Since the initial data $h_0^{N}$ is
a step function, the solution $\bar{h}^N(t,\theta)$ is also a step function,
that is,
\[
	\bar{h}^{N,\delta}(t,\theta)=\bar{h}^{N,\delta}(t,x/N),\quad \theta\in B(x/N,1/N),\,x\in\integer^d.
\]
 
\subsection{The discrete analog of the space $H$}
We shall analogously define the discrete version of
the Hilbert space $H$ introduced in Section~\ref{sec-sobolev}.
For a step function $f^N:\real^d\to\real$ with mesh size $1/N$,
that is, $f^N$ having the following representation
\begin{equation}\label{def-stepfunc}
	f^N(\theta)=\sum_{x\in\integer^d}N^{-1}\psi^N(x)1_{B(x/N,1/N)}(\theta)
\end{equation}
with $\psi^N\in\real^{\integer^d}$ such that $\psi^N(x)=0$ for $x\in\integer^d\smallsetminus D_N$, we define $\|f^N\|_{-1,N}$ by
\begin{align*}
	\|f^N\|_{-1,N}^2&=N^{-d-4}\sum_{x\in D_N}(\psi^N(x)-\langle\psi^N\rangle)
	(-\Delta_{D_N})^{-1}(\psi^N(x)-\langle\psi^N\rangle) \\
	&\qquad {}+N^{-2d-2}\langle\psi^N\rangle^2,
\end{align*}
where
\[
	\langle\psi^N\rangle=N^{-d}\sum_{x\in D_N}\psi^N(x).
\]
Note that the inverse of the Laplacian $(-\Delta_{D_N})^{-1}$ can be defined
as the linear operator from
\[
	\mathscr{A}_N=\left\{
	\phi\in\real^{D_N};
	\,\sum_{x\in D_N}\phi(x)=0
	\right\}
\]
to itself, by regarding $-\Delta_{D_N}$ as a bijection from
$\mathscr{A}_N$ to itself. It is verified by 
the summation-by-parts formula for $-\Delta_{D_N}$ as follows:
\begin{align}
	\sum_{x\in D_N}&(-\Delta_{D_N})\psi(x)\phi(x) \nonumber \\
	&=\frac{1}{2}\sum_{x\in D_N}\sum_{y\in D_N;|x-y|=1}
	\left(\psi(y)-\psi(x)\right)
	\left(\phi(y)-\phi(x)\right)  \nonumber \\
	&=\sum_{i=1}^d\sum_{x\in D_N\cap (D_N-e_i)}
	\left(\psi(x+e_i)-\psi(x)\right)
	\left(\phi(x+e_i)-\phi(x)\right) 
	\label{sum-by-parts}
\end{align}
holds for every $\phi,\psi\in\real^{D_N}$.

Under these settings, for $\psi^N\in \mathscr{A}_N$,
$\zeta^N:=(-\Delta_{D_N})^{-1}\psi^N$ is defined only on $D_N$.
Let us extend $\xi^N$ to outside of $D_N$ for convenience.
We define $\partial_i D_N$ by
\[
	\partial_i D_N=\{x\in\integer^d\smallsetminus D_N;\, \mathrm{dist}_{\integer^d}(x,D_N)=i\},\quad i\ge1,
\]
where $\mathrm{dist}_{\integer^d}$ is the graph distance on $\integer^d$.
We define the value of $\xi$ on $\partial_iD_N$ inductively, by
\[
	\zeta^N(x)=\left(\#\{y\in D_N; |x-y|=1\,\}\right)^{-1}
	\sum_{y\in D_N; |x-y|=1}\zeta^N(y)
\]
for $x\in \partial_1D_N$ and
\begin{align*}
	\zeta^N(x)&=\left(\#\{y\in \partial_{i-1}D_N; |x-y|=1\,\}\right)^{-1}
	\sum_{y\in \partial_{i-1}D_N; |x-y|=1}\zeta^N(y)
\end{align*}
for $x\in \partial_iD_N$. 
Let us take $K>0$ independent of $N$ such that
\[
	D\subset \bigcup_{x\in D_N\cup\bigcup_{i=1}^K\partial_i D_N}B(x/N,1/N).
\]
Extending $\zeta^N$ to $D_N\cup\bigcup_{i=1}^{K+1}\partial_i D_N$, we can 
introduce the macroscopic function $g^N$ on $D$ by
\begin{equation}\label{def-stepfunc2}
	G_Nf^N(\theta)=\sum_{x\in D_N\cup\bigcup_{i=1}^K\partial_i D_N}
	N^{-1}\zeta^N(x)1_{B(x/N,1/N)}(\theta),
	\quad \theta\in D.
\end{equation}
We note that $k^{N,\delta}$ in the equation \eqref{discretized_PDE}
can be extended to the function on $D$ by the same way.
We also note that we 
can choose a constant $C>0$ independent of $N$ such that
\begin{align}
	\sum_{x\in D_N\cup\bigcup_{i=1}^K\partial_i D_N}
	&\sum_{y\in\integer^d:|x-y|=1}(\zeta^N(x)-\zeta^N(y))^2 \nonumber \\
	&\le C
	\sum_{x\in D_N}
	\sum_{y\in D_N:|x-y|=1}(\zeta^N(x)-\zeta^N(y))^2, \label{eq-d3}
\end{align}
by Assumption~\ref{ass-domain}.

As a preparation for calculations, we shall show 
the following proposition, which means that $\|f^N\|_{-1,N}$ dominates
$\|f^N\|_{H^{1}(D)^*}$ if $f^N$ is a step function.
\begin{prop}\label{prop-sobolev-norms2}
		There exists a constant $C>0$ independent of $N$ such that
		\begin{equation}\label{sobolev-norms}
		\|f^N\|_{H^1(D)^*}\le C\|f^N\|_{-1,N}
		\end{equation}
		holds for every step function $f^N$ with mesh size $1/N$
		satisfying $\langle f^N,1\rangle=0$ and $f^N(x/N)=0$ for $x\in\integer^d\smallsetminus D_N$.
\end{prop}
Before starting the proof of Proposition~\ref{prop-sobolev-norms2},
we prepare a small lemma.
\begin{lemma}\label{lem-a1}
		For $\psi\in \mathscr{A}_N$ and $\phi\in\real^{D_N}$, we have
		\begin{align*}
		&\sum_{x\in D_N}\psi(x)\phi(x)\\
		&\quad =\frac{1}{2}\sum_{x\in D_N}\sum_{y\in D_N;|x-y|=1}
		\left((-\Delta_{D_N})^{-1}\psi(y)-(-\Delta_{D_N})^{-1}\psi(x)\right) 
		(\phi(y)-\phi(x)).
		\end{align*}
		Especially, we have
		\begin{align*}
		&\|f^N\|_{-1,N}^2 \\
		&\quad =\frac{1}{2}N^{-d-4}\sum_{x\in D_N}\sum_{y\in D_N;|x-y|=1}
		\left((-\Delta_{D_N})^{-1}\psi(y)
		-(-\Delta_{D_N})^{-1}\psi(x)\right)^2
		\end{align*}
		for every the step function $f^N$ represented by \eqref{def-stepfunc}
		with $\psi\in \mathscr{A}_N$.
\end{lemma}
\begin{proof}
		Take $\psi\in \mathscr{A}_N$ and $\phi\in\real^{D_N}$.
		By definition of $(-\Delta_{D_N})^{-1}$, we obtain
		\begin{align*}
			\sum_{x\in D_N}\psi(x)\phi(x)
			&=\sum_{x\in D_N}\left((-\Delta_{D_N})(-\Delta_{D_N})^{-1}\psi\right)(x)\phi(x) \\
			&=\frac{1}{2}\sum_{x\in D_N}\sum_{y\in D_N;|x-y|=1}
	\left((-\Delta_{D_N})^{-1}\psi(y)-(-\Delta_{D_N})^{-1}\psi(x)\right)\\
	&\qquad\qquad\qquad\qquad\qquad\qquad\times 
	\left(\phi(y)-\phi(x)\right) 
		\end{align*}
		which shows the conclusion.	 Here, we have used
		the summation-by-parts formula \eqref{sum-by-parts}.
\end{proof}

	\begin{proof}[Proof of Proposition~\ref{prop-sobolev-norms2}]
		We take $\psi^N$ satisfying \eqref{def-stepfunc}.
		We also take a function $J\in C^\infty(D)$ arbitrarily.
		Since $f^N(x/N)=0$ holds for $x\in\integer^d\smallsetminus D_N$,
		we have
		\begin{align*}
			\langle f^N, J\rangle
			&=\sum_{x\in D_N}\int_{B(x/N,1/N)}f^N(\theta)J(\theta)\,d\theta
		\end{align*}
		by the definition of $\langle\cdot,\cdot\rangle$. 
		Letting
		\[\xi^N(x)=N^{d+1}\int_{B(x/N,1/N)}J(\theta)\,d\theta,\quad x\in \integer^d,\]
		we then obtain
		\begin{align*}
			\langle f^N, J\rangle
			&=N^{-d-2}\sum_{x\in D_N}\sum_{y\in D_N;|x-y|=1}
		\left((-\Delta_{D_N})^{-1}\psi^N(y)-(-\Delta_{D_N})^{-1}\psi^N(x)\right) \\
		&\qquad \qquad \qquad \qquad \qquad \qquad\qquad \times
		(\xi^N(y)-\xi^N(x))
		\end{align*}
		from Lemma~\ref{lem-a1}. Using the Schwarz inequality
		and Lemma~\ref{lem-a1}, we get 
		\begin{align}
			\left|\langle f^N, J\rangle\right|
			&\le \|f^N\|_{-1,N}
			\left(N^{-d}\sum_{x\in D_N}\sum_{y\in D_N;|x-y|=1}
			\left(\xi^N(y)-\xi^N(x)\right)^2\right)^{1/2} \nonumber \\
			&=:\|f^N\|_{-1,N} I(\xi^N)^{1/2}. \label{eq-a2}
		\end{align}
		Since we have 
		\[\bigcup_{x\in D_N}\bigcup_{y\in D_N;|x-y|\le 1}B(y/N,1/N)\subset D\]
		by the definition of $D_N$ and we have
		\begin{align*}
			\left(\xi^N(x\pm e_i)-\xi^N(x)\right)^2
			& \le N^{d+1}
			\int_{B(x/N,1/N)}\int_0^{1/N}\left(\frac{\partial J}{\partial \theta_i}(\theta+\pm te_i)\right)^2dt\,d\theta
		\end{align*}
		for every $x\in D_N$ and $1\le i\le d$,
		we obtain
		\begin{align*}
			I(\xi^N)
			&\le N\sum_{x\in D_N}\sum_{i=1}^d
			\int_{B(x/N,1/N)}\int_0^{1/N}\left(\frac{\partial J}{\partial \theta_i}(\theta+te_i)\right)^2dt\,d\theta \\
			&\quad +N\sum_{x\in D_N}\sum_{i=1}^d
			\int_{B(x/N,1/N)}\int_0^{1/N}\left(\frac{\partial J}{\partial \theta_i}(\theta-te_i)\right)^2dt\,d\theta \\
			&\le 2\|\nabla J\|^2_{L^2(D)^d}.
		\end{align*}
		Combining the above with \eqref{eq-a2}, we obtain
		\[\left|\langle f^N, J\rangle\right|
		\le \sqrt{2}\|f^N\|_{-1,N}\|J\|_{H^1(D)}.\]
		The above implies the conclusion, since $C^\infty(D)$ is dense
		in $H^1(D)$.
	\end{proof}

Furthermore, the norm $\|\cdot\|_{-1,N}$ converges to $\|\cdot\|_H$
in the following sense:
\begin{prop}\label{prop-d1}
	For $N\ge1$, let $f^N$ be a step function
	represented by \eqref{def-stepfunc}. 
	If the sequence $\{f^N\}$ satisfies
	\begin{equation}\label{ass-d1-1}
		\sup_{N\ge1}\|f^N\|_{L^2(D)}<\infty
	\end{equation}
	and
	\begin{equation}\label{ass-d1-2}
		\lim_{N\to\infty}\|f^N-f\|_{H}=0
	\end{equation}
	for some $f\in H$,  
	we then have
	\[\lim_{N\to\infty}\|f^N\|_{-1,N}=\|f\|_{H}.\]
\end{prop}
\begin{proof} 
	We first obtain
	\begin{align*}
		\|f^N\|_{-1,N}^2
		&=N^{-d-4}\sum_{x\in D_N}\psi^N(x)
		(-\Delta_{D_N})^{-1}\psi^N(x)  \\
		&=(f^N,G_Nf^N)_{L^2(D)}
	\end{align*}
	by the definition of $\|\cdot\|_{-1,N}$, where
	$\psi^N$ is determined by \eqref{def-stepfunc}.
	Here, we have used $\psi^N(x)=0$ for every
	$x\in\integer^d\smallsetminus D_N$ at the last identity.
	Since we also have
	\[\|f\|^2_{H}=\langle f,Gf\rangle,\]
	we therefore obtain
	\begin{align*}
		\|f^N\|^2_{-1,N}-\|f\|^2_{H} 
		&=(f^N,G_Nf^N)_{L^2(D)}-(f^N,Gf)_{L^2(D)}\\
		&\quad  {}+\langle f^N,Gf\rangle
		-\langle f,Gf\rangle. 
	\end{align*}
	Since we have
	\[\lim_{N\to\infty}\left(\langle f^N,Gf\rangle
		-\langle f,Gf\rangle\right)=0\]
	from \eqref{ass-d1-2}, it is sufficient for our goal
	to show
	\begin{equation}\label{eq-d5}
	\lim_{N\to\infty}\left((f^N,g^N)_{L^2(D)}-(f^N,Gf)_{L^2(D)}\right)=0.
	\end{equation}
	Furthermore, once we have
	\begin{equation}\label{eq-d4}
	\lim_{N\to\infty}\|G_Nf^N-Gf\|_{L^2(D)}=0,
	\end{equation}
	we immediately obtain \eqref{eq-d5} by \eqref{ass-d1-1}.
	
	Let us show \eqref{eq-d4}.
	We take $J\in C^\infty(D)$ arbitrarily and define $\xi^N$ and $J^N$
	by
	\[\xi^N(x)=N^{d+1}\int_{B(x/N,1/N)}J(\theta')\,d\theta
	\]
	for $x\in \integer^d$ such that $B(x/N,1/N)\subset D$, and
	$J^N$ is defined by
	\[J^N(\theta)=N^{d}\int_{B(x/N,1/N)}J(\theta')\,d\theta'\]
	for $\theta\in B(x/N,1/N)$ with $x\in \integer^d$ such that $B(x/N,1/N)\subset D$, respectively.
	We note that  Lemma~\ref{lem-a1} implies
	\allowdisplaybreaks\begin{align}
	&\int_{D}f^N(\theta)J(\theta)\,d\theta \nonumber\\
	&=\sum_{i=1}^d\int_{\tilde{D}_N}\nabla^{N}_i G_Nf^N(\theta)\nabla^{N}_iJ^N(\theta)d\theta \nonumber\\
	&\quad {}-N^{-d-4}\sum_{i=1}^d\sum_{x\in D_N\cap (D_N-e_i)^\complement}
	(\zeta^N(x+e_i)-\zeta^N(x))(\xi^N(x+e_i)-\xi^N(x)),
	\label{eq-d1}
	\end{align}
	where $\zeta^N$ is the extension
	of $(-\Delta_{D_N})^{-1}\psi^N$, see the beginning of this subsection.
		
	Using \eqref{eq-d3} and Lemma~\ref{lem-a1}, 
	we can choose a subsequence $\{N'\}$ such that
	\begin{align*}
		G_Nf^N&\to\bar{g}\quad \text{strongly in $L^2(D)$,} \\
		\nabla^{N'}_i G_Nf^N&\to h_i\quad \text{weakly in $L^2(D)$} 
	\end{align*}
	as $N'\to\infty$ for some $\bar{g},h_i\in L^2(D)$.
	Replacing $N$ by $N'$ at \eqref{eq-d1} and 
	taking the limit $N'\to\infty$ at the both sides of \eqref{eq-d1},
	we obtain
	{\allowdisplaybreaks\begin{align*}
	\int_{D}f(\theta)J(\theta)\,d\theta&=
	\sum_{i=1}^d\int_{D}h_i(\theta)
	\nabla_iJ(\theta)d\theta.
	\end{align*}}
	Since the limit point $\bar{g}$ satisfies
	\[\int_{D}\bar{g}(\theta)\,d\theta=0\]
	and $\nabla_i\bar{g}=h_i$ for every $1\le i\le d$, we conclude that
	every limit point $\bar{g}$ satisfies \eqref{def-green1} and
	\eqref{def-green2}.
	Because the solution of the elliptic equation 
	\eqref{def-green1} and \eqref{def-green2} exists uniquely,
	the sequence $\{G_Nf^N\}$ itself need to converge to $\bar{g}=Gf$ in $L^2(D)$.
\end{proof}

\subsection{A priori bound for the discretized equation}
\label{subsec-apriori-pde}
Let us establish a priori bound of the solution $\bar{h}^{N,\delta}$
of \eqref{discretized_PDE}. 
To do so, we introduce an auxiliary function
similarly to Definition~\ref{def-PDE}.
Let us take the function $g\in C_0^\infty(D)$ which satisfies the following:
\begin{enumerate}
	\item $g(x)\ge0$ holds for every $x\in D$.
	\item $\int_D g(\theta)\,d\theta=1$.
\end{enumerate} 
We can then take $N_0\ge 1$ large enough such that
$\supp g\subset \tilde{D}_{N}$ for every $N\ge N_0$.
Let us establish a priori bound for $\bar{h}^N$ with $N\ge N_0$.

Using $g$ introduced above, we define $\zeta^N$ by
\[
\zeta^N(x)=N^{d+1}\int_{B(x/N,1/N)}g(\theta)d\theta,\quad x\in\integer^d
\]
and $g^N$ by
\[g^N(\theta)=\sum_{x\in\integer^d}N^{-1}\zeta^N(x)1_{B(x/N,1/N)}(\theta),
\quad\theta\in\real^d.\]
We then have the following bound:
\begin{equation}\label{bound_aux}
	\sup_{N\ge 1}\sup_{1\le i,j\le d}
	\left\{
	\left\|g^N\right\|_{\infty}+
	\left\|\nabla^N_i g^N\right\|_{\infty}+
	\left\|\nabla^N_i\nabla^N_j g^N\right\|_{\infty}
	\right\}\le c_g,
\end{equation}
where $c_g$ is the constant defined by
\[
	c_g=\sup_{1\le i,j\le d}
	\left\{
		\|g\|_{\infty}+
		\left\|\frac{\partial g}{\partial \theta_i}\right\|_{\infty}+
		\left\|\frac{\partial^2 g}{\partial \theta_i\theta_j}\right\|_{\infty}
	+1\right\}.
\]
Using $\zeta^N$ and $g^N$ introduced above, we define $\psi^N$ and $f^N$ by
\begin{equation}\label{eq2.3.1}
\psi^N(x)=v\zeta^N(x),\quad x\in\integer^d
\end{equation}
and
\begin{equation}\label{eq2.3.1b}
f^N(x)=vg^N(\theta),\quad \theta\in\real^d
\end{equation}
respectively, where $v$ is the ``volume'' of $h_0$, that is,
\[
v=\int_D \bar{h}^{N}_0(\theta)\,d\theta=\langle h_0,1\rangle
\]
for the initial datum $\bar{h}_0^N$ for \eqref{discretized_PDE}.
Note that the right hand side of the above does not depend on the
choice of $\delta$.
We then have that sequences $\{\psi^N\}$ and $\{f^N\}$ satisfy
following properties:
\begin{enumerate}
	\item For every $N$ and $x\in \integer^d\smallsetminus D_N$,
	$\psi^N(x)=f^N(x/N)=0$ holds.
	\item For every $N$, the relationship
	\[
	\sum_{x\in D_N}\psi^N(x)
	=N\int_D f^N(\theta)\,d\theta
	\]
	holds.
	\item The following bounds hold:
	\begin{equation}
	\sup_{N\ge 1}\sup_{1\le i,j\le d}
	\left\{
	\left\|f^N\right\|_\infty+
	\left\|\nabla^N_i f^N\right\|_\infty+
	\left\|\nabla^N_i\nabla^N_j f^N\right\|_\infty
	\right\}\le c_g|v|.
\end{equation}
\end{enumerate}

\begin{prop}
	There exist constants $C_1,C_2>0$ independent of $N,\delta$
	such that
	\[\|\bar{h}^{N,\delta}(t)\|^2_{-1,N}+N^{-d}\int_0^t
	\|\nabla^N \bar{h}^{N,\delta}(s)\|_{L^2(D)^d}^2\,ds
	\le C_1\|h^{N,\delta}(0)\|^2_{-1,N}+C_2(1+t)\]
	holds for every $t\ge0$.
\end{prop}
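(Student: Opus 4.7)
The plan is to adapt the continuous energy estimate from the proof of Proposition~\ref{prop-PDE-approx} to the discrete setting. I would set $H^{N,\delta}(t):=\bar h^{N,\delta}(t)-f^N$ with $f^N$ the auxiliary step function from \eqref{eq2.3.1b} of total mass $v=\langle h_0,1\rangle$. Because the discretized dynamics \eqref{discretized_PDE} is in divergence form, the discrete total sum of $\bar h^{N,\delta}(t)$ is preserved, so the step function associated to $H^{N,\delta}(t)$ has vanishing centered part on $D_N$ for every $t\ge 0$, and the inverse $(-\Delta_{D_N})^{-1}$ appearing in the $\|\cdot\|_{-1,N}$-norm is legitimately applied. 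The volume contribution $N^{-2d-2}\langle\psi^N_{\bar h}\rangle^2$ is constant in time and bounded uniformly by a constant depending only on $v$, so it only influences $C_2$.

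The main computation is then to differentiate the centered part of $\|H^{N,\delta}(t)\|_{-1,N}^2$. Using $\Delta_N=N^2\Delta_{D_N}$ on functions on $\tilde D_N$ and the cancellation $\Delta_{D_N}\circ(-\Delta_{D_N})^{-1}=-I$ on $\mathscr{A}_N$, followed by summation by parts for $\dive_N$ against $\nabla^N$, the $N$-scalings combine with the prefactor $N^{-d-4}$ of the $-1$-norm to produce
\[
\frac{d}{dt}\|H^{N,\delta}(t)\|_{-1,N}^2 = -2N^{-d}\sum_{x\in\tilde D_N}\nabla^N H^{N,\delta}(t,x/N)\cdot(\nabla\sigma^\delta)(\nabla^N\bar h^{N,\delta}(t,x/N)).
\]
From here the discrete analogue of the argument in Proposition~\ref{prop-PDE-approx} goes through: split $(\nabla\sigma^\delta)(\nabla^N\bar h^{N,\delta})=[(\nabla\sigma^\delta)(\nabla^N\bar h^{N,\delta})-(\nabla\sigma^\delta)(\nabla^N f^N)]+(\nabla\sigma^\delta)(\nabla^N f^N)$, invoke the uniform convexity \eqref{convex-reg} of $\sigma^\delta$ on the first piece to obtain a pointwise lower bound by $c_-|\nabla^N H^{N,\delta}|^2$, and use Cauchy--Schwarz combined with Young's inequality with parameter $c_-$ to absorb the cross term into half of that gradient, while the remainder is controlled via the uniform bound \eqref{bound_aux} by a constant depending only on $c_\pm$, $c_g$ and $v$. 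Integrating in time and passing from $H^{N,\delta}$ back to $\bar h^{N,\delta}$ via $\|\nabla^N\bar h^{N,\delta}\|^2\le 2\|\nabla^N H^{N,\delta}\|^2+2\|\nabla^N f^N\|^2$ (with $\|f^N\|_{-1,N}$ itself uniformly bounded) then yields the claimed estimate.

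The main technical obstacle is the bookkeeping in the summation-by-parts step: verifying that the Neumann-type indicator structure of $\Delta_N$ matches the inverse $(-\Delta_{D_N})^{-1}$, which is only defined on the mean-zero subspace $\mathscr{A}_N$, and that the recursive extension of $k^{N,\delta}$ onto the outer layers $\partial_iD_N$ from Section~\ref{subsec-discretized-pde} produces no spurious boundary flux when paired against step functions vanishing outside $\tilde D_N$. Once this discrete Green identity is established with the correct factor $N^2$ reconciling the $N^{-d-4}$ prefactor of the $-1$-norm with the target $N^{-d}$ scaling of the gradient term, the rest of the estimate is a mechanical discrete analogue of Proposition~\ref{prop-PDE-approx}.
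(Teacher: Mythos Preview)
Your proposal is correct and follows essentially the same approach as the paper: differentiate $\|\bar h^{N,\delta}(t)-f^N\|_{-1,N}^2$, use summation by parts to reduce to $-2N^{-d}\sum_{x\in\overline{D_N}}\nabla^N(\bar h^{N,\delta}-f^N)\cdot(\nabla\sigma^\delta)(\nabla^N\bar h^{N,\delta})$, exploit the uniform convexity \eqref{convex-reg} together with the bounds \eqref{bound_aux} on $f^N$, and integrate in time. The only cosmetic difference is that the paper splits $\nabla^N(\bar h^{N,\delta}-f^N)$ directly into $\nabla^N\bar h^{N,\delta}$ and $\nabla^N f^N$ and uses $u\cdot\nabla\sigma^\delta(u)\ge c_-|u|^2$, whereas you split $(\nabla\sigma^\delta)(\nabla^N\bar h^{N,\delta})$ and use the monotonicity form of \eqref{convex-reg}; both lead to the same estimate.
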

\begin{proof}
	Differentiating $\|\bar{h}^{N,\delta}(t)-f^N\|_{-1,N}^2$ in $t$, we obtain
\begin{align}
	\frac{d}{dt}
	\|\bar{h}^{N,\delta}(t)-f^N\|^2_{-1,N} 
	&=-2N^{-d}\sum_{x\in \overline{D_N}}
	\nabla^N \bar{h}^{N,\delta}(t,x/N)
	\cdot\nabla\sigma^{\delta}(\nabla^N \bar{h}^{N,\delta}(t,x/N)) \nonumber \\
	&\qquad {}+2N^{-d}\sum_{x\in \overline{D_N}}
	\nabla^N f^N(x/N)\cdot\nabla\sigma^\delta(\nabla^N \bar{h}^{N,\delta}(t,x/N)) 
	\nonumber \\
	&\le -2c_{-}N^{-d}\sum_{x\in \overline{D_N}}
	\left|\nabla^N \bar{h}^{N,\delta}(t,x/N)\right|^2 \nonumber \\
	&\qquad {}+2N^{-d}\sum_{x\in \overline{D_N}}
	\nabla^N f^N(x/N)\cdot\nabla\sigma^\delta(\nabla^N \bar{h}^{N,\delta}(t,x/N)),
	\label{eq2.8}
	\end{align}
	where $\overline{D_N}$ is defined by
	\[\overline{D_N}=\{x\in\integer^d;\, \text{there exists $y\in D_N$
	such that $|x-y|\le 1$}\}.\]
	Here, we have used the summation-by-parts formula
	\begin{equation}\label{eq2.10b}
	\sum_{x\in D_N}\alpha(x/N)\dive_N \beta(x/N)
	=-\sum_{x\in\overline{D_N}}\nabla^N\alpha(x/N)\cdot\beta(x/N),
	\end{equation}
	where $\alpha:\real^d\to\real$ 
	and $\beta=(\beta_i)_{i=1}^d:\real^d\to\real^d$
	are arbitrary functions such that
	$\alpha$ and $\beta_i\,(1\le i\le d)$ are step functions
	with mesh size $1/N$ and $\alpha(x/N)=0$
	for every $x\in \integer^d\smallsetminus D_N$.

	The second term in the right hand side can be estimated in the following
	way:
	\begin{align*}
		\Biggl|N^{-d}\sum_{x\in \overline{D_N}}&
	\nabla^N f^N(x/N)\cdot\nabla\sigma^\delta(\nabla^N \bar{h}^{N,\delta}(t,x/N)) \Biggr| \\
	&\le \frac{1}{2}c_{-}N^d\sum_{x\in \overline{D_N}}\left|\nabla^N\bar{h}^{N,\delta}(t,x/N)\right|^2
	+CN^{-d}|v|\Bigl|\overline{D_N}\Bigr|,
	\end{align*}
	with a constant $C>0$ independent of $N$.
	We have used the properties of $f^N$
	stated at the beginning of this subsection.
	Plugging the above into \eqref{eq2.8} and integrating in $t$, we obtain
	\begin{align*}
		\|\bar{h}^{N,\delta}(T)-f^N\|^2_{-1,N}
		&\le \|\bar{h}^{N,\delta}(0)-f^N\|^2_{-1,N} \\
		&\qquad {}-\frac{1}{2}c_{-}\int_0^T\|\nabla^N\bar{h}^{N,\delta}(t)\|_{L^2(D)^d}^2\,dt
		+CN^{-d}|v|\Bigl|\overline{D_N}\Bigr|T
	\end{align*}
	for every $T>0$,
	which implies the desired estimate, since $\|f^N\|_{-1,N}$ is
	bounded uniformly in $N$.
\end{proof}

We can improve the bound for $\nabla^N\bar{h}^{N,\delta}$ if the initial datum
is smooth enough. 
\begin{prop}\label{prop2.2}
	We assume that
	\begin{equation}\label{smoothness-initial}
		h_0\in C_0^\infty(D).
	\end{equation}
	We then have the following uniform bound:
	\[\sup_{N}\sup_{0\le t\le T}\|\nabla^N\bar{h}^{N,\delta}(t)\|^2_{L^2(D)^d}<\infty\]
	for every $T>0$.
\end{prop}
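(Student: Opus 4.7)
My plan is to exploit that the discretized equation \eqref{discretized_PDE} is the $H^{-1}$-gradient flow of the discrete surface-tension energy
\[
\Sigma^{N,\delta}(\bar h) := N^{-d}\sum_{x\in\overline{D_N}} \sigma^\delta\bigl(\nabla^N \bar h(x/N)\bigr),
\]
so that $\Sigma^{N,\delta}(\bar h^{N,\delta}(t))$ should be non-increasing in $t$. Once this monotonicity is established, the strong convexity estimate \eqref{convex-reg} (equivalent to $\sigma^\delta(u)\ge (c_-/2)|u|^2 - C$ and $\sigma^\delta(u)\le C(1+|u|^2)$) lets me convert an energy bound into the desired $L^2$-bound on $\nabla^N \bar h^{N,\delta}$.

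Concretely, I would differentiate $\Sigma^{N,\delta}(\bar h^{N,\delta}(t))$ in time and apply the summation-by-parts identity \eqref{eq2.10b}, using that $\partial_t \bar h^{N,\delta}(\cdot,x/N)=0$ for $x\notin D_N$, to get
\[
\frac{d}{dt}\Sigma^{N,\delta}(\bar h^{N,\delta}(t))
= -N^{-d}\sum_{x\in D_N} k^{N,\delta}(t,x/N)\,\partial_t \bar h^{N,\delta}(t,x/N)
= N^{-d}\sum_{x\in D_N} k^{N,\delta}\,\Delta_N k^{N,\delta}\,(t,x/N).
\]
A second discrete Green's identity, now for the Neumann-type Laplacian $\Delta_N$ together with the ghost-point rule that defines $k^{N,\delta}$ by averaging on the shells $\partial_i D_N$, rewrites the right-hand side as $-N^{-d}$ times a non-negative nearest-neighbor Dirichlet form in $k^{N,\delta}$. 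Hence $\Sigma^{N,\delta}(\bar h^{N,\delta}(t))\le \Sigma^{N,\delta}(\bar h^{N,\delta}(0))$ for all $t$. The assumption $h_0\in C_0^\infty(D)$ bounds $\sup_N\sup_x |\nabla^N \bar h_0^{N,\delta}(x/N)|$ uniformly, so together with the quadratic upper bound on $\sigma^\delta$ the initial energy is controlled uniformly in $N$ and $\delta\in[0,1]$; the quadratic lower bound then yields $\sup_{0\le t\le T}\|\nabla^N \bar h^{N,\delta}(t)\|_{L^2(D)}^2\le C$.

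The step I expect to require the most care is the boundary bookkeeping in the second summation by parts. The Laplacian $\Delta_N$ is defined with Neumann-type indicators on $\tilde D_N$, while the equation $\partial_t \bar h^{N,\delta} = -\Delta_N k^{N,\delta}$ is posed on $D_N$, and the averaged extension of $k^{N,\delta}$ to $\partial_i D_N$ only matches the Neumann data approximately. One therefore has to verify that the cross terms produced by this mismatch either cancel or can be absorbed, so that $\sum_{x\in D_N} k^{N,\delta}\Delta_N k^{N,\delta}$ really is non-positive (perhaps up to a negligible boundary correction controlled via Assumption~\ref{ass-domain}). With that discrete Green's identity in place, the rest of the argument reduces to the routine convexity estimates sketched above.
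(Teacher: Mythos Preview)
Your approach is exactly the paper's: differentiate the discrete surface-tension energy, integrate by parts once to obtain $\sum_{x\in D_N}k^{N,\delta}\Delta_N k^{N,\delta}$, observe this is non-positive, and then use the two-sided quadratic bounds on $\sigma^\delta$ coming from \eqref{eq3.1a}. Your boundary worry is misplaced, however: by its very definition the Neumann-type Laplacian $\Delta_N$ at $x/N\in\tilde D_N$ only couples $x$ to neighbours $y$ with $y/N\in\tilde D_N$, i.e.\ to $y\in D_N$, so for any $k:D_N\to\real$ one has the exact identity
\[
\sum_{x\in D_N}k(x/N)\,(-\Delta_N k)(x/N)=\frac{N^2}{2}\sum_{\substack{x,y\in D_N\\|x-y|=1}}\bigl(k(x/N)-k(y/N)\bigr)^2\ge 0,
\]
with no ghost-point values of $k^{N,\delta}$ entering and no boundary remainder to absorb; the averaging extension to $\partial_i D_N$ plays no role in this proposition.
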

\begin{proof}
	Differentiating 
	$\sum_{x\in\overline{D_N}}\sigma^\delta(\nabla^N \bar{h}^{N,\delta}(t,x/N))$ in $t$,
	we have
	\begin{align}
		\frac{\partial}{\partial t}
		&\sum_{x\in\overline{D_N}}\sigma^\delta(\nabla^N \bar{h}^{N,\delta}(t,x/N))	\nonumber \\
		&=\sum_{x\in \overline{D_N}}
		\nabla\sigma^\delta(\nabla^N \bar{h}^{N,\delta}(t,x/N))
		\nabla^N\frac{\partial}{\partial t}\bar{h}^{N,\delta}(t,x/N) \nonumber \\
		&=\sum_{x\in D_N}
		\dive_N\nabla\sigma^\delta(\nabla^N \bar{h}^{N,\delta}(t,x/N))
		\frac{\partial}{\partial t}\bar{h}^{N,\delta}(t,x/N) \nonumber \\
		&=\sum_{x\in D_N}
		k^{N,\delta}(t,x/N)\Delta_N k^{N,\delta}(t,x/N). \label{eq2.9}
	\end{align}
	Here, we have used (3.8) in \cite{N03}, since
	\[\frac{\partial \bar{h}^N}{\partial t}(t,x/N)=0,\quad x\in D_N^\complement\]
	holds.
	Since $-\Delta_N$ is non-negative definite, we obtain
	that the right hand side is non-positive.
	Dropping the right hand side and integrating in $t$, we have
	\[
		\sum_{x\in\overline{D_N}}\sigma^\delta(\nabla^N \bar{h}^{N,\delta}(t,x/N))
		\le 
		\sum_{x\in\overline{D_N}}\sigma^\delta(\nabla^N \bar{h}^{N,\delta}(0,x/N)),	\]
	which indicate the conclusion, since the function $\sigma$
	satisfies \eqref{eq3.1a}.
\end{proof}

Let us establish the bound for $k^N$ in \eqref{discretized_PDE},
in order to apply the argument in Section~3.3 of \cite{N03}.
\begin{prop}\label{prop2.3}
	We assume \eqref{smoothness-initial}. We then have the following bound:
	\[
	\sup_N\left\{
	\sup_{0\le t\le T}\left\|\frac{\partial \bar{h}^{N,\delta}}{\partial t}
	\right\|_{-1,N}^2
	+\int_0^T
	\left\|\nabla^N\frac{\partial \bar{h}^{N,\delta}}{\partial t}\right\|^2_{L^2(D)^d}\,dt
	\right\}<\infty.
	\]
\end{prop}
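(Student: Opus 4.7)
The plan is to derive a linear fourth-order parabolic equation for $w^N:=\partial_t\bar h^{N,\delta}$ by differentiating \eqref{discretized_PDE} in time, perform an $H^{-1}$-type energy estimate exactly mirroring the one used for Proposition~2.1, and finally bound the initial datum $\|w^N(0)\|_{-1,N}$ using the smoothness assumption \eqref{smoothness-initial} on $h_0$.

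Differentiating \eqref{discretized_PDE} in $t$ yields
\[
\partial_t w^N=-\Delta_N\,\dive_N\bigl\{(\hess\sigma^\delta)(\nabla^N\bar h^{N,\delta}(t))\,\nabla^N w^N\bigr\}
\qquad\text{on }D_N,
\]
with $w^N(t,\cdot)\equiv 0$ on $D_N^c$ and $\sum_{x\in D_N}w^N(t,x/N)=0$ (by total-mass conservation of the discrete flow). Writing $A^{N,\delta}(t,x):=(\hess\sigma^\delta)(\nabla^N\bar h^{N,\delta}(t,x/N))$, the regularized bound \eqref{convex-reg} gives $c_-I\le A^{N,\delta}\le c_+I$ uniformly in $N$, $\delta$ and $t$. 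Proceeding as in the proof of the a priori bound for $\bar h^{N,\delta}$ --- compute $\frac{d}{dt}\|w^N(t)\|_{-1,N}^2$ by pairing the evolution equation against $(-\Delta_{D_N})^{-1}w^N$, then apply the summation-by-parts identity \eqref{eq2.10b} twice, with boundary contributions vanishing because $w^N$ is Dirichlet on $D_N^c$ --- one obtains
\[
\frac{d}{dt}\|w^N(t)\|_{-1,N}^2\le -2c_-\,\|\nabla^N w^N(t)\|_{L^2(D)}^2.
\]
Integrating over $[0,T]$ gives
\[
\sup_{0\le t\le T}\|w^N(t)\|_{-1,N}^2
+2c_-\int_0^T\|\nabla^N w^N(t)\|_{L^2(D)}^2\,dt
\le\|w^N(0)\|_{-1,N}^2,
\]
so the proposition reduces to a uniform bound on $\|w^N(0)\|_{-1,N}$.

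At $t=0$ we have $w^N(0)=-\Delta_N k^{N,\delta}(0)$ with $k^{N,\delta}(0)=\dive_N\{(\nabla\sigma^\delta)(\nabla^N\bar h^{N,\delta}_0)\}$, and summation by parts combined with the mean-zero and Dirichlet structure of $w^N(0)$ gives
\[
\|w^N(0)\|_{-1,N}^2\le C\,\|\nabla^N k^{N,\delta}(0)\|_{L^2(D)}^2
\]
for a constant $C$ independent of $N$. The quantity $\nabla^N k^{N,\delta}(0)$ is a linear combination of discrete differences of $\bar h^{N,\delta}_0$ of orders one through three multiplied by bounded derivatives of $\sigma^\delta$. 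Since $h_0\in C_0^\infty(D)$ has support strictly inside $D$, the corresponding discrete differences of $\bar h^{N,\delta}_0$ defined by \eqref{initial_PDE} are uniformly bounded in $N$ for $N$ large; and for fixed $\delta>0$, $\sigma^\delta\in C^\infty(\real^d)$ has globally bounded derivatives of all orders. The required uniform bound follows.

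The main obstacle is the bookkeeping of boundary terms, since $\Delta_N$ incorporates a Neumann-type treatment on $\tilde D_N$ while the $\|\cdot\|_{-1,N}$-norm is defined through $(-\Delta_{D_N})^{-1}$, and since $k^{N,\delta}$ has been extended beyond $D_N$ by the averaging prescription of Section~\ref{subsec-discretized-pde}. Verifying that the boundary contributions in the summation-by-parts arguments either vanish (thanks to the Dirichlet structure of $w^N$) or are exactly absorbed by the averaging extension of $k^{N,\delta}$ --- so that the cross term collapses cleanly to $-2N^{-d-4}\sum_{x\in\overline{D_N}}\nabla^N w^N\cdot A^{N,\delta}\nabla^N w^N$ without spurious boundary residues --- is the technical crux; once this is in place, the rest is a standard discrete $H^{-1}$-energy computation, completely parallel to the one leading to the bound for $\bar h^{N,\delta}$ itself and to the argument of Section~3.3 of \cite{N03}.
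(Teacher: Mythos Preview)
Your proposal is correct and follows essentially the same route as the paper: differentiate the discretized equation in time, use the uniform Hessian bound \eqref{convex-reg} for $\sigma^\delta$, and run the discrete $H^{-1}$-energy estimate via summation by parts to reduce everything to a bound on $\|\partial_t\bar h^{N,\delta}(0)\|_{-1,N}$, which then follows from the smoothness of $h_0$ and of $\sigma^\delta$. Your write-up is in fact more explicit than the paper's about the initial-data bound and the boundary bookkeeping; the paper compresses these into the phrases ``by performing the summation-by-parts several times'' and ``which implies the conclusion.''
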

\begin{proof}
	Noting 
	\[
	\frac{\partial^2 \bar{h}^{N,\delta}}{\partial t^2}(t,x/N)
	=\sum_{i,j=1}^d\Delta_N \nabla_i^{N,*}
	\left\{
	\frac{\partial^2\sigma^{\delta}}{\partial u_i\partial u_j}
	(\nabla^N \bar{h}^{N,\delta})
	\nabla^N_j\frac{\partial \bar{h}^{N,\delta}(t)}{\partial t}
	\right\}
	\]
	and 
	\[\sum_{x\in D_N}\frac{\partial \bar{h}^{N,\delta}}{\partial t}(t,x/N)=0,\]
	we have
\begin{align}
	\frac{d}{dt}&
	\left\|\frac{\partial \bar{h}^{N,\delta}}{\partial t}\right\|^2_{-1,N} \nonumber\\
	&=-2N^{-d}
	\sum_{i,j=1}^d\sum_{x\in \overline{D_N}}
	\nabla^N_i\frac{\partial \bar{h}^{N,\delta}}{\partial t}(t,x/N)
	\frac{\partial^2\sigma}{\partial u_i\partial u_j}
	(\nabla^N \bar{h}^{N,\delta})\nabla^N_j\frac{\partial \bar{h}^{N,\delta}(t)}{\partial t}
	\nonumber \\
	&\le -2c_{-}N^{-d}
	\sum_{i=1}^d\sum_{x\in \overline{D_N}}
	\left(\nabla^N_i\frac{\partial \bar{h}^{N,\delta}}{\partial t}(t,x/N)\right)^2
	\nonumber
	\end{align}
	by performing the summation-by-parts several times.
	Integrating the both sides in $t$, we obtain the conclusion.
\end{proof}
\begin{remark}
	By the definition of $k^{N,\delta}$, we have
	\[\left\|\frac{\partial \bar{h}^{N,\delta}}{\partial t}
	\right\|_{-1,N}^2=N^{-d}\sum_{x\in D_N}
		k^{N,\delta}(t,x/N)(-\Delta_Nk^{N,\delta})(t,x/N).\]
	We therefore obtain
	\begin{equation}\label{eq2.12dd}
		\sup_{N}\sup_{0\le t\le T}N^{-d}\sum_{x\in D_N}
		k^{N,\delta}(t,x/N)(-\Delta_Nk^{N,\delta})(t,x/N)<\infty.
	\end{equation}
	by Proposition~\ref{prop2.3}.
\end{remark}
\begin{remark}
	We need the smoothness of $\sigma^\delta$
	in order to obtain the uniform bound in $N$
	for
	\[
	N^{-d}\sum_{x\in D_N}
	k^{N,\delta}(0,x/N)(-\Delta_Nk^{N,\delta})(0,x/N),\]
	even if $h_0$ is smooth, for example, $h_0\in C_0^\infty(D)$.
	We have $C^1$-regularity of $\sigma$
	and the Lipschitz continuity of $\nabla\sigma$,
	but such regularity is less than that we need.
	It is the reason why we consider
	the equation \eqref{discretized_PDE} with smooth $\sigma^\delta$
	instead of the original surface tension $\sigma$.
\end{remark}
As a direct consequence of Proposition~\ref{prop2.3},
we have the following result.
\begin{cor}\label{cor2.4}
	We assume \eqref{smoothness-initial}.
	There exists a constant $C>0$ independent of $N$ such that
	\[\sup_{N}\left\|\bar{h}^{N,\delta}(t_1)-\bar{h}^{N,\delta}(t_2)\right\|^2_{-1,N}\le C|t_1-t_2|\]
	holds for every $0\le t_1,t_2\le T$.
\end{cor}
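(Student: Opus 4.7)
The plan is to read off the conclusion from Proposition~\ref{prop2.3} by means of the fundamental theorem of calculus. Since $\bar{h}^{N,\delta}$ solves the finite-dimensional ODE system \eqref{discretized_PDE}, it is $C^1$ in time, and
\[
	\bar{h}^{N,\delta}(t_1) - \bar{h}^{N,\delta}(t_2) = \int_{t_2}^{t_1} \frac{\partial \bar{h}^{N,\delta}}{\partial s}(s)\,ds.
\]
The difference is supported in $D_N$ and has vanishing spatial average (as already noted in the proof of Proposition~\ref{prop2.3}, the sum $\sum_{x \in D_N} \bar{h}^{N,\delta}(t,x/N)$ is conserved by \eqref{discretized_PDE}), so $\|\cdot\|_{-1,N}$ restricts to a genuine Hilbert norm on such increments and the triangle inequality applies.

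I would then apply Minkowski's integral inequality for the norm $\|\cdot\|_{-1,N}$, followed by Cauchy--Schwarz in the time variable, to obtain
\[
	\left\|\bar{h}^{N,\delta}(t_1) - \bar{h}^{N,\delta}(t_2)\right\|_{-1,N}^2
	\le |t_1-t_2|\int_{t_1\wedge t_2}^{t_1\vee t_2}\left\|\frac{\partial \bar{h}^{N,\delta}}{\partial s}(s)\right\|_{-1,N}^2 ds.
\]
Proposition~\ref{prop2.3} provides a constant $M$, independent of $N$, such that $\sup_{0 \le s \le T}\|\partial_s \bar{h}^{N,\delta}(s)\|_{-1,N}^2 \le M$; plugging this in bounds the right-hand side by $M|t_1-t_2|^2 \le MT|t_1-t_2|$, yielding the claim with $C = MT$.

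There is no substantial obstacle here: the statement is essentially a repackaging of the $L^\infty_t$ bound in Proposition~\ref{prop2.3} as Lipschitz (hence in particular Hölder-$1/2$) time regularity in the discrete $H^{-1}$-norm. The only mild care needed is to confirm that $\|\cdot\|_{-1,N}$ really is a norm on the increment, which follows from the conservation of spatial mass noted above together with the positive definiteness of $-\Delta_{D_N}$ on $\mathscr{A}_N$.
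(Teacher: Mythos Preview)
Your proposal is correct and is exactly what the paper intends: the corollary is stated as a ``direct consequence of Proposition~\ref{prop2.3}'' with no further argument given, and your derivation via the fundamental theorem of calculus together with the uniform-in-$t$ bound on $\|\partial_t\bar{h}^{N,\delta}\|_{-1,N}^2$ is precisely that direct consequence. Your observation that the argument in fact yields the stronger Lipschitz bound $\|\bar{h}^{N,\delta}(t_1)-\bar{h}^{N,\delta}(t_2)\|_{-1,N}^2\le M|t_1-t_2|^2$ is also correct, though only the H\"older-$1/2$ version is needed downstream.
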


\subsection{Uniform $L^2$-bound for $k^{N,\delta}$}
In this subsection, let us establish the uniform 
$L^2$-bound for $k^{N,\delta}$. 
Noting \eqref{eq2.12dd}, we can obtain the desired bound once we show
the following:
\begin{prop}\label{prop2.7}
	Under the assumption \eqref{smoothness-initial}, we obtain
	\begin{equation}
		\sup_N \sup_{0\le t\le T}\left|\langle k^{N,\delta}(t)\rangle\right|<\infty \label{eq2.12d}
	\end{equation}
	for every $T>0$, where
	\[\langle k^{N,\delta}(t)\rangle=N^{-d}\sum_{x\in D_N}k^{N,\delta}(t,x/N).\]
\end{prop}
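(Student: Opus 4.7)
The plan is to express $\langle k^{N,\delta}(t)\rangle$ as a boundary flux and then use smoothness to bound it. First, I would write $k^{N,\delta}=\dive_N G^{N,\delta}$, where $G^{N,\delta}:=(\nabla\sigma^\delta)(\nabla^N\bar{h}^{N,\delta})$. Because $V$ is symmetric (Assumption~\ref{ass-potential}), so is $\sigma$ and hence $\sigma^\delta$, giving $\nabla\sigma^\delta(0)=0$. Combined with the Dirichlet condition $\bar{h}^{N,\delta}\equiv 0$ outside $\tilde D_N$, this means $G^{N,\delta}(x/N)$ vanishes once $x$ is a bounded graph distance away from $D_N$. Hence $G^{N,\delta}$ has compact support on $\integer^d$, and the telescoping identity yields
\[
\sum_{x\in \integer^d}\dive_N G^{N,\delta}(x/N)=0,\qquad\text{so}\qquad
\sum_{x\in D_N}k^{N,\delta}(t,x/N)= -\sum_{x\in \integer^d\setminus D_N}\dive_N G^{N,\delta}(t,x/N).
\]
Only sites in the thin shell $\partial_1 D_N\cup\partial_2 D_N$ contribute on the right, a set of cardinality $O(N^{d-1})$.

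Second, I would estimate each surviving term via $|\dive_N G^{N,\delta}(x/N)|\le CN\max_{y\sim x}|G^{N,\delta}(y/N)|$, together with the Lipschitz bound $|\nabla\sigma^\delta(u)|\le c_{+}|u|$ implied by \eqref{convex-reg}. This gives
\[
|\langle k^{N,\delta}(t)\rangle|\le \frac{C}{N^{d-1}}\sum_{x\in \partial D_N}|\nabla^N\bar{h}^{N,\delta}(t,x/N)|,
\]
so it suffices to bound the boundary-layer $L^1$-average of $|\nabla^N\bar h^{N,\delta}|$ uniformly in $N$ and $t\in[0,T]$. The smoothness hypothesis \eqref{smoothness-initial} is exactly what allows one to do so: Proposition~\ref{prop2.3} combined with the equation $\partial_t\bar h^{N,\delta}=-\Delta_N k^{N,\delta}$ provides discrete $H^1$-control on $k^{N,\delta}$ (up to an additive constant), and hence, via discrete elliptic regularity and a discrete trace inequality applied to $\bar h^{N,\delta}$, upgrades Proposition~\ref{prop2.2} into the required boundary estimate.

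The main obstacle is this last step. The bare $L^2$-gradient bound of Proposition~\ref{prop2.2} is insufficient: Cauchy--Schwarz applied directly gives only
\[
\frac{1}{N^{d-1}}\sum_{x\in\partial D_N}|\nabla^N\bar h^{N,\delta}|\le \frac{1}{N^{d-1}}\bigl(N^{d-1}\bigr)^{1/2}\bigl(N^d\bigr)^{1/2}=O(N^{1/2}),
\]
hence $|\langle k^{N,\delta}\rangle|=O(N^{1/2})$, which diverges. Propagating boundary regularity through the fourth-order parabolic flow, exploiting that $h_0\in C_0^\infty(D)$ vanishes in a neighborhood of $\partial D$, and invoking Proposition~\ref{prop2.3} together with the $C^\infty$-smoothness of $\sigma^\delta$, is what closes the gap and yields the uniform estimate \eqref{eq2.12d}.
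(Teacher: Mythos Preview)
Your boundary-flux idea is natural, but the final step has a genuine gap. You correctly identify that Cauchy--Schwarz against Proposition~\ref{prop2.2} gives only $O(N^{1/2})$, and you then assert that ``discrete elliptic regularity and a discrete trace inequality applied to $\bar h^{N,\delta}$'' closes the gap. It does not, at least not with the tools available at this point in the paper. To bound $N^{-(d-1)}\sum_{x\in\partial D_N}|\nabla^N\bar h^{N,\delta}|$ uniformly you would need something like a uniform $L^\infty$ bound (or a strong boundary $L^p$ bound) on $\nabla^N\bar h^{N,\delta}$. The elliptic equation for $\bar h^{N,\delta}$ has coefficients $A^\delta(\nabla^N\bar h^{N,\delta})$ that depend on the solution itself; with only the $L^2$-gradient bound of Proposition~\ref{prop2.2} in hand, no $W^{2,p}$-type regularity is available. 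The $L^p$ bound in Proposition~\ref{prop2.8} would still be insufficient (H\"older on the boundary layer yields $N^{1/p}$, not $O(1)$), and in any case it is proved \emph{after} Proposition~\ref{prop2.7} via Corollary~\ref{cor2.7}, so invoking it here would be circular. The compact support of $h_0$ does not help either: the equation is fourth-order parabolic, so for $t>0$ the solution is instantly nonzero near $\partial D$.

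The paper avoids boundary regularity altogether. It writes $\nabla\sigma^\delta(u)=A^\delta(u)u+a^\delta(u)$ with uniformly elliptic $A^\delta$ and bounded $a^\delta$, then for each fixed $t$ views $\bar h^{N,\delta}(t)$ as the solution of a \emph{linear} elliptic problem with right-hand side $k^{N,\delta}(t)$. Splitting $k^{N,\delta}=\tilde k^{N,\delta}+\langle k^{N,\delta}\rangle$ into mean-zero and constant parts induces $\bar h^{N,\delta}=\bar h_1^N+\bar h_2^N$. Energy testing gives $\|\nabla^N\bar h_1^N\|_{L^2}^2\le C\|\tilde k^{N,\delta}\|_{L^2}^2+C$ (Proposition~\ref{prop2.4}), which is controlled by Proposition~\ref{prop2.3} and Poincar\'e. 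The key step (Proposition~\ref{prop2.5}) is the \emph{lower} bound $\langle k^{N,\delta}\rangle^2\le C\|\nabla^N\bar h_2^N\|_{L^2}^2$, obtained by testing the constant-RHS problem against the auxiliary bump $g^N$. Since $\|\nabla^N\bar h_2^N\|_{L^2}\le\|\nabla^N\bar h^{N,\delta}\|_{L^2}+\|\nabla^N\bar h_1^N\|_{L^2}$ and both pieces are bounded by Proposition~\ref{prop2.2}, the estimate follows. The entire argument uses only global $L^2$ quantities and never touches the boundary trace of $\nabla^N\bar h^{N,\delta}$.
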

In order to prove the above, we shall use the same argument
as in Section~3.3 of \cite{N03} and reduce our problem
to that for solutions of elliptic equations whose main term is linear.
Repeating the argument in in Section~3.3 of \cite{N03}, we have
the following decomposition for $\nabla\sigma$:
\begin{equation}\label{decomp-surface}
	\nabla\sigma(u)=A(u)u+a(u),
\end{equation}
where $A(u)=(A_{ij}(u))$ is the $d\times d$ diagonal matrix for every
$u\in\real^d$ and $a(u)=(a_i(u)):\real^d\to\real^d$.
We remark that the matrix $A(u)$ satisfies
\begin{equation}\label{ellipticity}
c_{-}\mathbb{I}\le A(u)\le c_{+}\mathbb{I}
\end{equation}
uniformly in $u$, where $\mathbb{I}$ is the $d\times d$ identity matrix
and $c_\pm$ is same as in Assumption~\ref{ass-potential}.
Furthermore, we also remark that the vector $a$ satisfies
\[
	\sup_{1\le i\le d}\sup_{u\in\real^d}|a_i(u)|\le C_a
\]
with some constant $C_a>0$.
By \eqref{decomp-surface} and the definition of $\sigma^\delta$,
we also obtain the decomposition for $\nabla\sigma^\delta$ as follows:
\[
	\nabla\sigma^\delta(u)=A^\delta(u)u+a^\delta(u),
\]
where
\[
	A^\delta_{ij}(u)=A_{ij}*\rho_\delta(u),\quad 1\le i,j\le d
\]
and
\[
	a_i^\delta(u)=a_{ij}*\rho_\delta(u)+
	\int_{\real^d}A_{ii}(u-v)v_i\rho_\delta(v)\,dv
	,\quad 1\le i\le d.
\]
We note that $A^\delta$ is diagonal and satisfies \eqref{ellipticity} again,
and that $a^\delta$ satisfies
\[
	\sup_{\delta>0}\sup_{1\le i\le d}\sup_{u\in\real^d}|a_i^\delta(u)|<C'_a
\]
with some constant $C'_a>0$. Putting
\begin{align*}
	A^{N,\delta}(t,\theta)&=A^\delta(\nabla^N \bar{h}^{N,\delta}(t,\theta)), \\
	a^{N,\delta}(t,\theta)&=a^\delta(\nabla^N \bar{h}^{N,\delta}(t,\theta)),
\end{align*}
we see that $\bar{h}^{N,\delta}(t)$ satisfies
\begin{equation}\label{PDE_elliptic}
	k^{N,\delta}(t)=\dive_N\bigl(
	A^{N,\delta}(t)\nabla^N \bar{h}^{N,\delta}(t))\bigr)
	+\dive_Na^{N,\delta}(t),\quad t\ge0.
\end{equation}
Let us regard \eqref{PDE_elliptic} as the elliptic equation for given $A^{N,\delta}(t)
,a^{N,\delta}(t)$ and $k^{N,\delta}$, which has a unique solution.
Since the main term of \eqref{PDE_elliptic} is linear,
we can use the principle of superposition.
For $t>0$, let $\bar{h}^{N,\delta}_1(t)$ be the unique solution of 
\begin{equation}\label{PDE_elliptic1}
	\tilde{k}^{N,\delta}(t)=\dive_N\bigl(
	A^{N,\delta}(t)\nabla^N \bar{h}_1^{N,\delta}(t))\bigr)
	+\dive_Na^{N,\delta}(t)
\end{equation}
with the Dirichlet boundary condition  
\[
		\bar{h}_1^{N,\delta}(t,x/N)=0,\quad x\in \integer^d\smallsetminus D_N,
\]
where $\tilde{u}^{N,\delta}(t)$ is defined by
\[
\tilde{k}^{N,\delta}(t):=k^{N,\delta}(t)-\langle k^{N,\delta}(t)\rangle.
\]
Furthermore, we let $\bar{h}^{N,\delta}_2(t)$ be the unique solution of
\begin{equation}\label{PDE_elliptic2}
	\langle k^{N,\delta}(t)\rangle=\dive_N\bigl(
	A^{N,\delta}(t)\nabla^N \bar{h}_2^{N,\delta}(t))\bigr)
\end{equation}
with the Dirichlet boundary condition similarly to $\bar{h}_1^{N,\delta}$.
Note that the original $\bar{h}^{N,\delta}(t)$ can be expressed as
\begin{equation}\label{eq2.16c}
\bar{h}^{N,\delta}(t)=\bar{h}^{N,\delta}_1(t)+\bar{h}^{N,\delta}_2(t).
\end{equation}
We will establish \eqref{eq2.12d} via the bounds for
$\bar{h}^{N,\delta}(t),\bar{h}^{N,\delta}_1(t)$ and
$\bar{h}^{N,\delta}_2(t)$.
Let us take $\delta>0$ arbitrarily
and we sometimes omit the parameter $\delta$ throughout this subsection
in order to keep notations simple.
%
%

We shall at first show the bound for $\bar{h}_1^{N}$.
\begin{prop}\label{prop2.4}
	There exist constants $C_1,C_2>0$ independent of $N$ and $t$
	such that
	\[\|\nabla^N \bar{h}_1^N(t)\|^2_{L^2(D)^d}+
	\|\bar{h}_1^N(t)\|^2_{L^2(D)}\le C_1\|\tilde{k}^{N,\delta}(t)\|^2_{L^2(D)}+C_2\]
	holds for every $t\ge 0$.	
\end{prop}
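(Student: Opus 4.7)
The plan is to test the elliptic equation \eqref{PDE_elliptic1} against $\bar{h}_1^{N,\delta}$ itself, exploit the uniform ellipticity of $A^{N,\delta}$, the boundedness of $a^{N,\delta}$, and a discrete Poincar\'e inequality on $D_N$ with Dirichlet boundary values. The argument is standard in spirit (it is the discrete counterpart of the $H_0^1$ energy estimate for the scalar divergence-form elliptic problem $-\dive(A\nabla u)=f+\dive a$), and should not require any structural property beyond those already prepared in the paper.

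More precisely, the first step is to multiply \eqref{PDE_elliptic1} by $\bar{h}_1^{N,\delta}(t,x/N)$, sum over $x\in D_N$, and apply the summation-by-parts formula \eqref{eq2.10b}. Since $\bar{h}_1^{N,\delta}$ vanishes on $\integer^d\smallsetminus D_N$, this identity is legitimate and produces
\[
	N^{-d}\!\!\sum_{x\in \overline{D_N}}\!\!\nabla^N\bar{h}_1^{N,\delta}\cdot A^{N,\delta}\nabla^N\bar{h}_1^{N,\delta}
	=-N^{-d}\!\!\sum_{x\in D_N}\!\!\bar{h}_1^{N,\delta}\tilde k^{N,\delta}
	- N^{-d}\!\!\sum_{x\in \overline{D_N}}\!\!a^{N,\delta}\cdot\nabla^N\bar{h}_1^{N,\delta}.
\]
By the lower ellipticity bound in \eqref{ellipticity}, the left-hand side dominates $c_{-}\|\nabla^N\bar{h}_1^{N,\delta}(t)\|^2_{L^2(D)}$.

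Next I would estimate the two right-hand terms. For the second, use the uniform $L^\infty$ bound $|a^{N,\delta}|\le C_a'$ together with Young's inequality to obtain a bound of the form $\varepsilon\|\nabla^N\bar{h}_1^{N,\delta}\|_{L^2(D)}^2+C(\varepsilon)$. For the first, apply Cauchy--Schwarz to get $\|\bar{h}_1^{N,\delta}\|_{L^2(D)}\|\tilde k^{N,\delta}\|_{L^2(D)}$, then invoke the discrete Poincar\'e inequality for step functions vanishing outside $D_N$,
\[
	\|\bar{h}_1^{N,\delta}\|_{L^2(D)}\le C_P\|\nabla^N\bar{h}_1^{N,\delta}\|_{L^2(D)},
\]
valid uniformly in $N$ because $D$ is bounded, and then use Young's inequality. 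Choosing $\varepsilon$ sufficiently small (for instance $\varepsilon=c_{-}/4$), the two gradient contributions are absorbed into the left-hand side, which yields $\|\nabla^N\bar{h}_1^{N,\delta}\|_{L^2(D)}^2\le C_1\|\tilde k^{N,\delta}\|_{L^2(D)}^2+C_2$. A further application of the same Poincar\'e inequality promotes this into the stated bound that also controls $\|\bar{h}_1^{N,\delta}\|_{L^2(D)}^2$.

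The only point that requires any care is the uniform discrete Poincar\'e inequality, which is where Assumption~\ref{ass-domain} enters: the mild geometric regularity of $D_N$ prevents the Poincar\'e constant from blowing up with $N$, so $C_P$ is indeed $N$-independent. Everything else is an application of ellipticity, Cauchy--Schwarz and Young's inequality, and the fact that $a^{N,\delta}$ is bounded uniformly in $\delta$ and in $u$. Consequently the constants $C_1,C_2$ depend only on $c_{\pm}$, $C_a'$ and the domain $D$, and not on $N$ or $\delta$.
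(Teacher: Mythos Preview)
Your proposal is correct and follows essentially the same route as the paper: test \eqref{PDE_elliptic1} against $\bar{h}_1^{N,\delta}$, integrate by parts via \eqref{eq2.10b}, use the ellipticity bound \eqref{ellipticity} and the uniform bound on $a^{N,\delta}$, control the $\tilde k^{N,\delta}$ term by Cauchy--Schwarz and the discrete Poincar\'e inequality, and absorb gradient terms via Young's inequality. One small remark: the discrete Poincar\'e inequality here is for functions vanishing outside $D_N$ (Dirichlet data), and this holds uniformly in $N$ simply because $D$ is bounded; Assumption~\ref{ass-domain} is not needed at this point.
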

\begin{proof}
	Multiplying the both side of \eqref{PDE_elliptic1} by
	$\bar{h}_1^N(t)$ and taking the sum over $D_N$, we have
	\begin{align*}
	\sum_{x\in D_N}\tilde{k}^{N,\delta}(t,x/N)
	\bar{h}_1^{N}(t,x/N)
	&=\sum_{x\in D_N}\bar{h}_1^{N}(t,x/N)\dive_N\bigl(
	A^N(t)\nabla^N \bar{h}_1^{N}(t))\bigr)(x/N) \\
	&\qquad +\sum_{x\in D_N}\bar{h}_1^{N}(t,x/N)\dive_Na^N(t)(x/N).
	\end{align*}	
	Dividing the both side by $N^d$ and performing the summation-by-parts,
	we obtain
	\begin{align*}
	N^{-d}&\sum_{x\in D_N}\tilde{k}^{N,\delta}(t,x/N)
	\bar{h}_1^{N}(t,x/N) \\
	&=N^{-d}\sum_{x\in \overline{D_N}}\nabla^N\bar{h}_1^{N}(t,x/N)\cdot
	A^N(t)\nabla^N \bar{h}_1^{N}(t,x/N) \\
	&\qquad +N^{-d}\sum_{x\in \overline{D_N}}\nabla\bar{h}_1^{N}(t,x/N)a^N(t,x/N)  \\
	&\le -\frac{1}{2}c_{-}\sum_{x\in \overline{D_N}}\left|\nabla^N\bar{h}_1^{N}(t,x/N)\right|^2+\frac{8C_a}{c_{-}}N^d|D_N|.
	\end{align*}
	We have used \eqref{ellipticity} and 
	$\bar{h}_1^{N}(t,x/N)=0$ for $x\in \integer^d\smallsetminus D_N$.
	Using the Poincar\'e inequality
	\begin{equation}\label{poincare-discrete}
		\|\bar{h}_1^N(t)\|^2_{L^2(D)}\le C\|\nabla^N \bar{h}_1^N(t)\|^2_{L^2(D)^d}
	\end{equation}
	with a constant $C>0$ independent in $N$,
	we have
	\begin{align*}
	\frac{1}{2}c_{-}&\|\nabla^N\bar{h}_1^{N}(t)\|_{L^2(D)^d}^2\\
	&\le 
	2\gamma \|\tilde{k}^{N,\delta}(t)\|^2_{L^2(D)}
	+2\gamma^{-1}\|\bar{h}_1^{N}(t)\|^2_{L^2(D)}
	+\frac{8C_a}{c_{-}}N^d|D_N| \\
	&\le 
	2\gamma \|\tilde{k}^{N,\delta}(t)\|^2_{L^2(D)}
	+2\gamma^{-1}C\|\nabla\bar{h}_1^{N}(t)\|^2_{L^2(D)^d}
	+\frac{8C_a}{c_{-}}N^d|D_N|
	\end{align*}
	for every $\gamma>0$. Choosing $\gamma=4C/{c_{-}}$, we conclude
	\begin{align*}
	\frac{1}{4}c_{-}\|\nabla^N\bar{h}_1^{N}(t)\|_{L^2(D)^d}^2
	&\le \frac{8C}{c_{-}} \|\tilde{k}^{N,\delta}(t)\|^2_{L^2(D)}
	+\frac{8C_a}{c_{-}}N^d|D_N|.
	\end{align*}
	Applying the Poincar\'e inequality \eqref{poincare-discrete}
	to the above, we also 
	obtain the bound for $\|\bar{h}_1^{N}(t)\|_{L^2}^2$.
\end{proof}

Since we now have the nice bounds for $\bar{h}_1^N(t)$ and $\bar{h}^N(t)$,
we have one for $\bar{h}_2^N(t)$ also. We shall show the following proposition
which says that the bound for $\bar{h}_2^N(t)$ implies the bound for $\langle k^{N,\delta}(t)\rangle$.
\begin{prop}\label{prop2.5}
	For $\alpha\in\real$, let
	$\bar{h}^N_{2,\alpha}(t)$ be the solution of
		\begin{equation}\label{PDE_elliptic2-1}
		 \alpha=\dive_N\bigl(
		A^{N}(t)\nabla^N \bar{h}_{2,\alpha}^{N}(t))\bigr)
		\end{equation}
		with the Dirichlet boundary condition. We then have
		\begin{equation}\label{eq2.17}
			\alpha^2 
			\le C\|\nabla^N\bar{h}_{2,\alpha}^N\|_{L^2(D)^d}^2.
		\end{equation}
		with a constant $C>0$ independent of $N,t$ and $\alpha$.
\end{prop}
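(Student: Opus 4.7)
The approach is to pair \eqref{PDE_elliptic2-1} against a fixed test function that is independent of $\alpha$ and $t$, and read off the desired inequality. A natural choice is the smooth cut-off $g \in C_0^\infty(D)$ with $g \ge 0$ and $\int_D g(\theta)\,d\theta = 1$ introduced in Subsection~\ref{subsec-apriori-pde}. For $N \ge N_0$, its discretization $g^N$ has three properties we will use: (i) $g^N|_{\integer^d \smallsetminus D_N} \equiv 0$ (because $\supp g \subset \tilde{D}_N$), (ii) $N^{-d}\sum_{x \in D_N} g^N(x/N) = 1$ (direct computation from the definition of $g^N$), and (iii) $\|\nabla^N g^N\|_{L^2(D)} \le c_g$ by \eqref{bound_aux}.

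The execution is then short. Multiply both sides of \eqref{PDE_elliptic2-1} by $g^N(x/N)$, sum over $x \in D_N$, divide by $N^d$, and apply the summation-by-parts identity \eqref{eq2.10b} (whose hypothesis is exactly (i)) to obtain
\begin{equation*}
\alpha \cdot N^{-d}\sum_{x\in D_N} g^N(x/N)
= -N^{-d}\sum_{x \in \overline{D_N}} \nabla^N g^N(x/N) \cdot A^N(t,x/N)\, \nabla^N \bar{h}_{2,\alpha}^N(x/N).
\end{equation*}
Invoking (ii) on the left, and bounding the right by Cauchy--Schwarz combined with the uniform ellipticity $|A^N(t)| \le c_{+}$ from \eqref{ellipticity} and the estimate (iii), we obtain
\[
|\alpha| \le c_{+} c_g \,\|\nabla^N \bar{h}_{2,\alpha}^N\|_{L^2(D)},
\]
and squaring gives \eqref{eq2.17} with $C = (c_+ c_g)^2$, independent of $N$, $t$ and $\alpha$.

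This is really just the weak formulation of the elliptic equation tested against a non-degenerate cut-off, so there is no substantial obstacle. The only point requiring care is that the test function must vanish outside $D_N$ in order for the summation-by-parts to close without boundary contributions, which is precisely why $g$ and $N_0$ were chosen as in Subsection~\ref{subsec-apriori-pde}. Equivalently, linearity of \eqref{PDE_elliptic2-1} in $\alpha$ yields $\bar{h}_{2,\alpha}^N = \alpha\,\bar{h}_{2,1}^N$, and the argument above is nothing but the proof that $\|\nabla^N \bar{h}_{2,1}^N\|_{L^2(D)}$ is uniformly bounded from below in $N$ and $t$.
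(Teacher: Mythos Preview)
Your proof is correct and in fact more direct than the paper's. Both proofs hinge on testing the elliptic equation against the fixed cut-off $g^N$, but the paper reaches this step only after a detour: it first multiplies \eqref{PDE_elliptic2-1} by $\bar{h}^N_{2,\alpha}$ itself to obtain the energy identity \eqref{eq2.16}, uses linearity $\bar{h}^N_{2,\alpha}=\alpha\,\bar{h}^N_{2,1}$ to reduce the claim to the lower bound \eqref{eq2.21} on $\bigl|N^{-d}\sum_{x\in D_N}\bar{h}^N_{2,1}(t,x/N)\bigr|$, and then proves \eqref{eq2.21} by combining the energy identity for $\bar{h}^N_{2,1}$ with the pairing against $g^N$. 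You bypass all of this by testing directly against $g^N$ from the outset, which immediately gives $|\alpha|\le c_{+}\|\nabla^N g^N\|_{L^2(D)}\|\nabla^N\bar{h}^N_{2,\alpha}\|_{L^2(D)}$; your closing remark about the lower bound on $\|\nabla^N\bar{h}^N_{2,1}\|_{L^2(D)}$ is exactly what the paper establishes along its longer route. One cosmetic point: the bound $\|\nabla^N g^N\|_{L^2(D)}\le c_g$ is not literally what \eqref{bound_aux} says (that is a pointwise bound), but passing to the $L^2$ norm over $\overline{D_N}$ only introduces a harmless factor depending on $|D|$, so the constant remains independent of $N$, $t$ and $\alpha$.
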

\begin{proof}
	Multiplying \eqref{PDE_elliptic2-1} by $h_{2,\alpha}^N$
	and taking sum over $D_N$, we have
	\begin{align*}
	\alpha N^{-d}&\sum_{x\in D_N}\bar{h}_{2,\alpha}^N(t,x/N) \\
	&=N^{-d}\sum_{x\in D_N}\bar{h}_{2,\alpha}^N(t,x/N)
	\dive_N\left(A^{N}(t)\nabla^N \bar{h}_{2,\alpha}^N(t,x/N)\right).
	\end{align*}
	Performing the summation-by-parts at the right hand side,
	we obtain
	\begin{align}
	\alpha N^{-d}&\sum_{x\in D_N}\bar{h}_{2,\alpha}^N(t,x/N) \nonumber \\
	&=-N^{-d}\sum_{x\in \overline{D_N}}\nabla^N\bar{h}_{2,\alpha}^N(t,x/N)\cdot
	A^{N}(t)\nabla^N \bar{h}_{2,\alpha}^N(t,x/N). \label{eq2.16}
	\end{align}
	We have used $\bar{h}_{2,\alpha}^N(t)$ satisfies the Dirichlet
	boundary condition.
	Noting that $h_{2,\alpha}^N$ can be expressed by
	\[h_{2,\alpha}^N=\alpha h_{2,1}^N\]
	because the right hand side of \eqref{PDE_elliptic2-1} is linear,
	we get
	\begin{align*}
	\Biggl|\alpha^2 N^{-d}&\sum_{x\in D_N}\bar{h}_{2,1}^N(t,x/N)\Biggr|\\
	&=N^{-d}\sum_{x\in \overline{D_N}}\nabla^N\bar{h}_{2,\alpha}^N(t,x/N)
	\cdot A^{N}(t)\nabla^N \bar{h}_{2,\alpha}^N(t,x/N) \\
	&\le c_{+}N^{-d}\sum_{x\in \overline{D_N}}\left|\nabla^N\bar{h}_{2,\alpha}^N(t,x/N)\right|^2
	\end{align*}
	from \eqref{ellipticity}.
	Here, once we have 
	\begin{equation}\label{eq2.21}
	\inf_{N}\left|N^{-d}\sum_{x\in D_N}\bar{h}_{2,1}^N(t,x/N)\right|>c
	\end{equation}
	with a constant $c>0$ independent of $N$ and $t$,
	we immediately obtain the conclusion. We shall therefore show \eqref{eq2.21}.
	Using \eqref{eq2.16} and \eqref{ellipticity}, we get
	\begin{align}
	\left|N^{-d}\sum_{x\in D_N}\bar{h}_{2,1}^N(t,x/N)\right|
	&\ge c_{-}
	\left\|\nabla^N\bar{h}_{2,1}^N\right\|^2_{L^2(D)^d}. \label{eq2.18b}
	\end{align}
	For the function $g^N$ introduced at the beginning of Section~\ref{subsec-apriori-pde},
	we obtain
	\begin{align*}
	1&=N^{-d}\sum_{x\in D_N}g^N(x/N) \\
	&=N^{-d}\sum_{x\in \overline{D_N}}\nabla^Ng^N(x/N)\cdot
	A^{N}(t)\nabla^N \bar{h}_{2,1}^N(t,x/N) \\
	&\le C'\|\nabla^N g^N\|_{L^2(D)^d}\|\nabla^N \bar{h}_{2,1}^N(t)\|_{L^2(D)^d}.
	\end{align*}
	with a constant $C'>0$ by using \eqref{ellipticity}.
	Combining the above with \eqref{bound_aux} and \eqref{eq2.18b}, we get
	\begin{align*}
	\left|N^{-d}\sum_{x\in D_N}\bar{h}_{2,1}^N(t,x/N)\right|
	\ge \frac{c_{-}}{C'^{2}c_{g}^2},
	\end{align*}
	which shows \eqref{eq2.21}.
\end{proof}

Once we obtain Proposition~\ref{prop2.4} and Proposition~\ref{prop2.5},
we can easily show Proposition~\ref{prop2.7}.
Noting
\[
\|\nabla^N h_2^N(t)\|^2_{L^2(D)^d}\le 
2\|\nabla^N h^N(t)\|^2_{L^2(D)^d}+ 
2\|\nabla^N h_1^N(t)\|^2_{L^2(D)^d},\]
we have 
\[
		\sup_N \sup_{0\le t\le T}\|\nabla^N h_2^N(t)\|_{L^2(D)^d}<\infty
\]
by using Proposition~\ref{prop2.2} and Proposition~\ref{prop2.4}.
On the other hand, since we have
\[|\langle k^{N,\delta}(t)\rangle|^2\le C\|\nabla^N h_2^N(t)\|_{L^2(D)^d}^2\]
from Proposition~\ref{prop2.5}, we obtain \eqref{eq2.12d}
and therefore Proposition~\ref{prop2.7}.

\begin{cor}\label{cor2.7}
	Under the assumption \eqref{smoothness-initial},
	we have the following bound:
	\begin{equation}\label{eq2.23c}
		\sup_{N}\sup_{0\le t\le T}\left(\|k^{N,\delta}(t)\|_{L^2(D)}+\|\nabla^N k^{N,\delta}(t)\|_{L^2(D)^d}\right)<\infty.
	\end{equation}
\end{cor}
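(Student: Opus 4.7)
The plan is to establish both bounds in \eqref{eq2.23c} by combining three ingredients already at our disposal: the Dirichlet-form bound in the remark following Proposition~\ref{prop2.3}, the integral bound in Proposition~\ref{prop2.7}, and a uniform-in-$N$ discrete Poincar\'e inequality of Neumann type on $D_N$.

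For the gradient estimate, the first step is to observe that a summation by parts, together with the indicator structure built into the definition of $\Delta_N$ (which encodes a Neumann-type boundary behavior) and the averaging extension of $k^{N,\delta}$ to $\partial_1 D_N$, converts the Dirichlet form into the $L^2$-norm of the discrete gradient:
\[
N^{-d}\sum_{x\in D_N}k^{N,\delta}(t,x/N)(-\Delta_N k^{N,\delta})(t,x/N)
\;=\;\|\nabla^N k^{N,\delta}(t)\|_{L^2(D)}^2,
\]
up to bond-counting conventions. The remark after Proposition~\ref{prop2.3} then directly yields the desired uniform bound on $\|\nabla^N k^{N,\delta}(t)\|_{L^2(D)}$.

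For the $L^2$ bound, I would split
\[
k^{N,\delta}(t,x/N)=\bigl(k^{N,\delta}(t,x/N)-\bar{k}^{N,\delta}(t)\bigr)+\bar{k}^{N,\delta}(t),
\]
where $\bar{k}^{N,\delta}(t):=|D_N|^{-1}\sum_{x\in D_N}k^{N,\delta}(t,x/N)$ is the lattice mean. Since $\bar{k}^{N,\delta}(t)=\langle k^{N,\delta}(t)\rangle/(N^{-d}|D_N|)$ and $N^{-d}|D_N|\to|D|>0$ under Assumption~\ref{ass-domain}, Proposition~\ref{prop2.7} supplies a uniform bound on $\bar{k}^{N,\delta}(t)$, and hence on its $L^2$ contribution. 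The fluctuation part is then controlled through the discrete Neumann-type Poincar\'e inequality
\[
\|k^{N,\delta}(t)-\bar{k}^{N,\delta}(t)\|_{L^2(D)}\le C\,\|\nabla^N k^{N,\delta}(t)\|_{L^2(D)},
\]
with $C$ independent of $N$, into which the gradient bound above is plugged directly.

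The main delicate point is verifying that the discrete Neumann Poincar\'e inequality holds with a constant uniform in $N$ on the sequence $D_N$. This uniformity is standard for Lipschitz domains satisfying Assumption~\ref{ass-domain}, but does require some care because $k^{N,\delta}$ is not zero outside $D_N$ (it is extended by local averaging through $\partial_i D_N$), so the simpler Dirichlet-type Poincar\'e inequality used for $\bar{h}^{N,\delta}_1$ in Proposition~\ref{prop2.4} is not available here. Once this uniform Poincar\'e estimate is in place, no further analysis is needed and \eqref{eq2.23c} follows immediately.
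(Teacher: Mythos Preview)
Your proposal is correct and follows essentially the same route as the paper. The paper's proof is terser: it records the two comparison inequalities
\[
\|k^{N,\delta}(t)\|_{L^2(D)}^2\le C_1\,N^{-d}\!\sum_{x\in D_N}|k^{N,\delta}(t,x/N)|^2,
\qquad
\|\nabla^N k^{N,\delta}(t)\|_{L^2(D)}^2\le C_2\,N^{-d}\!\sum_{x\in D_N}k^{N,\delta}(-\Delta_N k^{N,\delta}),
\]
(both consequences of Assumption~\ref{ass-domain} and the averaging extension of $k^{N,\delta}$), and then simply cites Proposition~\ref{prop2.7}, leaving the Dirichlet-form bound from the remark after Proposition~\ref{prop2.3} and the discrete Neumann--Poincar\'e step implicit; you have made these two ingredients explicit, which is exactly what is needed to close the argument.
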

\begin{proof}
	From Assumption~\ref{ass-domain} and the definition of $k^{N,\delta}$
	as in Section~\ref{subsec-discretized-pde}, we can easily see
	\[\|k^{N,\delta}(t)\|_{L^2(D)}\le C_1
	\sum_{x\in D_N}|k^{N,\delta}(t,x/N)|^2\]
	and 
	\[\|\nabla^N k^{N,\delta}(t)\|^2_{L^2(D)}\le 
	C_2N^{-d}\sum_{x\in D_N}
		k^{N,\delta}(t,x/N)(-\Delta_N k^{N,\delta})(t,x/N)\]
	with constants $C_1,C_2>0$ independent of $N$. 
	These inequalities and Proposition~\ref{prop2.7} imply \eqref{eq2.23c}.
\end{proof}

Once we have Corollary~\ref{cor2.7}, we can also obtain
uniform $L^p$-bound for $\nabla^N\bar{h}^N$. This uniform bound plays the key role in the derivation of PDE \eqref{PDE1}
from the height variable $h^N$.
\begin{prop}\label{prop2.8}
	We assume \eqref{smoothness-initial}.
	We then have the following bounds:
	\begin{equation*}
	\sup_{N}\sup_{0\le t\le T}
	\|k^{N,\delta}(t)\|_{L^p(D)}^p<\infty
	\end{equation*}
	and
	\[
	\sup_N\sup_{0\le t\le T}
	\|\nabla^N \bar{h}^{N,\delta}(t)\|_{L^p(D)^d}^p<\infty
	\]
	for some $p>2$.
\end{prop}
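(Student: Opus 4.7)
The plan is to derive both $L^p$ bounds from Corollary~\ref{cor2.7} by combining a discrete Sobolev embedding with a discrete version of Meyers' $L^p$-regularity theorem for uniformly elliptic equations in divergence form.

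\textbf{Bound for $k^{N,\delta}$.} Corollary~\ref{cor2.7} gives a uniform (in $N$, $\delta$, $t\in[0,T]$) bound on
\[
\|k^{N,\delta}(t)\|_{L^2(D)}+\|\nabla^N k^{N,\delta}(t)\|_{L^2(D)}.
\]
Since $k^{N,\delta}(t)$ is a step function of mesh $1/N$ vanishing on $D_N^\complement$ (after the extension prescribed in Section~\ref{subsec-discretized-pde}), a discrete Sobolev inequality for step functions yields
\[
\|k^{N,\delta}(t)\|_{L^q(D)}\le C_{\text{Sob}}\bigl(\|k^{N,\delta}(t)\|_{L^2(D)}+\|\nabla^N k^{N,\delta}(t)\|_{L^2(D)}\bigr)
\]
with $q=2d/(d-2)$ when $d\ge 3$ and any $q<\infty$ when $d\le 2$. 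This settles the first bound for any $p\in(2,q]$.

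\textbf{Bound for $\nabla^N\bar{h}^{N,\delta}$.} Rewrite the identity \eqref{PDE_elliptic} as the elliptic problem
\[
-\dive_N\bigl(A^{N,\delta}(t)\nabla^N\bar{h}^{N,\delta}(t)\bigr)=-k^{N,\delta}(t)+\dive_N a^{N,\delta}(t),
\]
with Dirichlet boundary data, uniformly elliptic coefficients $A^{N,\delta}(t)$ (by \eqref{ellipticity}) whose ellipticity constants $c_-,c_+$ do not depend on $N$, $\delta$, or $t$, and a uniformly bounded vector field $a^{N,\delta}(t)$. Interpreting the right-hand side as an element of the discrete $W^{-1,p}$-space, the first step places $k^{N,\delta}(t)\in L^p$ uniformly, while $\dive_N a^{N,\delta}(t)$ lies in $W^{-1,\infty}$ uniformly. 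A discrete Meyers estimate then produces a $p_0>2$, depending only on $c_-$ and $c_+$, such that
\[
\|\nabla^N\bar{h}^{N,\delta}(t)\|_{L^p(D)}\le C\bigl(\|k^{N,\delta}(t)\|_{L^p(D)}+\|a^{N,\delta}(t)\|_{L^\infty(D)}\bigr)
\]
for every $p\in(2,p_0]$, with $C$ independent of $N$, $\delta$, $t$. Combined with Step~1 and the uniform $L^\infty$ bound on $a^{N,\delta}$, this yields the second claim.

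\textbf{Main obstacle.} The crux is establishing the discrete Meyers estimate uniformly in $N$. The classical proof relies on Caccioppoli's inequality, Gehring's reverse Hölder lemma, and a standard self-improvement argument; each ingredient admits a lattice analog once one fixes appropriate discrete cubes and averaged norms. One possible shortcut is to piecewise-affinely interpolate $\bar{h}^{N,\delta}$ and the coefficients $A^{N,\delta}$ to produce $H^1(D)$ functions, apply the continuum Meyers theorem, and translate the resulting $L^p$ bound back to the step-function setting; the interpolation errors are controlled by the $L^2$ gradient bounds already available, so uniformity in $N$ is preserved. Either route delivers the required exponent $p>2$ and completes the proof.
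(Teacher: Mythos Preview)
Your approach is essentially the same as the paper's. The paper's proof is a two-line citation: it invokes Proposition~I.4 of \cite{FS97} together with Corollary~\ref{cor2.7} for the first bound (this is exactly the discrete Sobolev embedding you describe), and it invokes the argument of Section~3.3 of \cite{N03} applied to \eqref{PDE_elliptic1} for the second (this is precisely the discrete Meyers-type $L^p$ estimate for uniformly elliptic divergence-form equations that you outline). The only cosmetic difference is that the paper applies the Meyers argument to the decomposed piece $\bar h_1^{N,\delta}$ via \eqref{PDE_elliptic1} rather than to the full equation \eqref{PDE_elliptic}; since $\bar h_2^{N,\delta}$ solves \eqref{PDE_elliptic2} with a uniformly bounded constant right-hand side, this makes no real difference. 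One small inaccuracy: $k^{N,\delta}$ is not extended by zero outside $D_N$ but by successive averaging as described at the end of Section~\ref{subsec-discretized-pde}; however, Corollary~\ref{cor2.7} already controls the extended function, so your Sobolev argument goes through unchanged.
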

\begin{proof}
	Combining Corollary~\ref{cor2.7} with the similar argument to the proof of
	Proposition~I.4 in \cite{FS97}, we obtain the first assertion.
	Applying the argument in Section~3.3 of \cite{N03} to \eqref{PDE_elliptic1}, we also obtain the second assertion.
\end{proof}

As an application of Proposition~\ref{prop2.8}, we can obtain the following
identity corresponding to the oscillation inequality in \cite{DGI00}.
This also plays the key role in the derivation of PDE \eqref{PDE1}
from the height variable $h^N$. 
\begin{prop}\label{oscillation}
	We assume \eqref{smoothness-initial}.
	For the solution $\bar{h}^{N,\delta}$ of \eqref{discretized_PDE} 
	and $e\in\integer^d$ such that $|e|=1$, we have
	\begin{equation}\label{eq2.23b}
		\lim_{N\to\infty}N^{-d}\int_0^T
		\sum_{x\in\overline{D_N}}\left|\nabla^N\bar{h}^{N,\delta}(t,x/N+e/N)
		-\nabla^N\bar{h}^{N,\delta}(t,x/N)\right|^2\,dt=0.
	\end{equation}
\end{prop}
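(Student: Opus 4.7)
The difference appearing inside \eqref{eq2.23b} can be rewritten by commuting the discrete derivatives,
\[
\nabla^N\bar{h}^{N,\delta}(t,x/N+e/N)-\nabla^N\bar{h}^{N,\delta}(t,x/N)
=N^{-1}\,\nabla^N v_e(t,x/N),\qquad v_e:=\nabla^N_e\bar{h}^{N,\delta},
\]
so the claim reduces to showing that $\int_0^T\|\nabla^N v_e(t)\|_{L^2(D)}^2\,dt$ remains bounded uniformly in $N$ (for each fixed $\delta>0$); multiplying by the resulting factor $N^{-2}$ then yields \eqref{eq2.23b}.

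To obtain such a bound, my plan is to derive a linear discrete elliptic equation for $v_e$ from the identity \eqref{PDE_elliptic}, rewritten compactly as $k^{N,\delta}=\dive_N((\nabla\sigma^\delta)(\nabla^N\bar{h}^{N,\delta}))$. Since $\sigma^\delta\in C^\infty(\real^d)$ -- which is precisely why the regularization \eqref{PDE1-reg} was introduced -- the fundamental theorem of calculus gives
\[
(\nabla\sigma^\delta)(\nabla^N\bar{h}^{N,\delta}(x+e/N))-(\nabla\sigma^\delta)(\nabla^N\bar{h}^{N,\delta}(x))
=\bar A^{N,\delta}(x)\bigl(\nabla^N\bar{h}^{N,\delta}(x+e/N)-\nabla^N\bar{h}^{N,\delta}(x)\bigr)
\]
with $\bar A^{N,\delta}(x)=\int_0^1(\nabla^2\sigma^\delta)(\lambda\nabla^N\bar{h}^{N,\delta}(x+e/N)+(1-\lambda)\nabla^N\bar{h}^{N,\delta}(x))\,d\lambda$, which inherits the uniform ellipticity $c_-I\le\bar A^{N,\delta}(x)\le c_+I$ from \eqref{convex-reg}. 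Commuting $\nabla^N_e$ with $\dive_N$ then produces the linear identity
\[
\nabla^N_e k^{N,\delta}=\dive_N\bigl(\bar A^{N,\delta}\,\nabla^N v_e\bigr).
\]

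Testing this identity against $v_e$, summing over $D_N$, and applying the summation-by-parts formula \eqref{eq2.10b} yields
\[
c_-\,N^{-d}\!\!\sum_{x\in\overline{D_N}}|\nabla^N v_e(t,x/N)|^2
\le N^{-d}\Bigl|\sum_{x\in D_N}v_e(t,x/N)\,\nabla^N_e k^{N,\delta}(t,x/N)\Bigr|.
\]
Cauchy-Schwarz combined with a discrete Poincar\'e inequality for $v_e$ (applicable because $v_e$ is supported in an $O(1)$-neighbourhood of $D_N$ thanks to the Dirichlet condition $\bar{h}^{N,\delta}|_{D_N^c}=0$) bounds the right-hand side by $C\|\nabla^N_e k^{N,\delta}(t)\|_{L^2(D)}\|\nabla^N v_e(t)\|_{L^2(D)}$. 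Absorbing the $\|\nabla^N v_e\|_{L^2(D)}$-factor into the left-hand side and invoking Corollary~\ref{cor2.7} then supplies the required uniform bound on $\int_0^T\|\nabla^N v_e(t)\|_{L^2(D)}^2\,dt$.

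The principal obstacle is the boundary treatment: $v_e$ does not vanish exactly on $\partial D_N$ but only on a slightly shifted discrete boundary, so both the summation-by-parts step and the Poincar\'e inequality above need to be justified with care. This is where Assumption~\ref{ass-domain} and the extension of $k^{N,\delta}$ beyond $D_N$ defined in Section~\ref{subsec-discretized-pde} come in, along the same lines as in the proof of Corollary~\ref{cor2.7}.
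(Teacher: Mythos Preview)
Your core computation coincides with the paper's: both pair $\nabla^N_e k^{N,\delta}$ with $v_e=\nabla^N_e\bar h^{N,\delta}$ and extract the oscillation quantity from the uniform ellipticity of $\nabla^2\sigma^\delta$. The difference is in normalisation and in what you attempt to prove. You discard the factor $N^{-2}$ and aim for the \emph{uniform boundedness} of $\int_0^T\|\nabla^N v_e\|^2_{L^2}\,dt$, multiplying back by $N^{-2}$ only at the end. The paper instead keeps the $N^{-2}$ throughout: it sets $F_0^N=N^{-d-2}\sum_{x\in\overline{D_N}}\nabla_i^N k^{N,\delta}\,\nabla_i^N\bar h^{N,\delta}$, observes directly that $|F_0^N|\le N^{-2}\|\nabla^N k^{N,\delta}\|_{L^2}\|\nabla^N\bar h^{N,\delta}\|_{L^2}\to 0$ by Cauchy--Schwarz together with Proposition~\ref{prop2.2} and Corollary~\ref{cor2.7} (no Poincar\'e step is needed), and then shows that the boundary corrections $F_2^N,F_3^N$ vanish by invoking the $L^p$-bound of Proposition~\ref{prop2.8} via H\"older's inequality on the thin layer $\overline{B_N}$.

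The genuine gap in your proposal is precisely the boundary step you flag at the end. Your displayed inequality $c_-\|\nabla^N v_e\|^2_{L^2}\le N^{-d}\bigl|\sum_{x\in D_N} v_e\,\nabla^N_e k^{N,\delta}\bigr|$ is only formal: the identity $\nabla^N_e k^{N,\delta}=\dive_N(\bar A^{N,\delta}\nabla^N v_e)$ fails on the boundary layer $\overline{D_N}\setminus(D_N\cap(D_N-e))$, and the resulting correction term is essentially $N^{2}$ times the paper's $F_2^N$. Since the paper only obtains $|F_2^N|=O(N^{-\beta})$ for some small $\beta>0$ (coming from the H\"older estimate \eqref{eq2.23} and Proposition~\ref{prop2.8}), your version of that correction is of order $N^{2-\beta}\to\infty$, and the uniform bound on $\|\nabla^N v_e\|^2_{L^2}$ cannot be closed. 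Neither Assumption~\ref{ass-domain} nor the extension of $k^{N,\delta}$ beyond $D_N$ helps here: those devices control $\nabla^N k^{N,\delta}$ across $\partial D_N$, not the discrepancy between the extended $k^{N,\delta}$ and $\dive_N\bigl((\nabla\sigma^\delta)(\nabla^N\bar h^{N,\delta})\bigr)$ outside $D_N$. The remedy is to retain the $N^{-2}$ prefactor as the paper does; your Poincar\'e/absorption argument then becomes superfluous, and the boundary corrections are handled by Proposition~\ref{prop2.8}.
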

\begin{proof}
	Take $1\le i\le d$ arbitrary.
	From \eqref{discretized_PDE}, we can split
	\[		F^N_0(t):=N^{-d-2}\sum_{x\in \overline{D_N}}\nabla_i^N k^{N,\delta}(t,x/N) 
		\nabla_i^N \bar{h}^{N,\delta}(t,x/N) \]
	into three terms as follows:
	\begin{align*}
		F^N_0(t) &=N^{-d-2}\sum_{x\in \overline{\overline{D_N}}}\sum_{j=1}^d
		\nabla_i^N\nabla\sigma(\nabla^N\bar{h}^{N,\delta}(t,x/N))
		\nabla_i^N\nabla_j^N\bar{h}^{N,\delta}(t,x/N) \\
		&\qquad {}-N^{-d-2}\sum_{x\in \overline{D_N}\cap (D_N-e_i)^\complement}
		\nabla_i^N\dive_N\nabla\sigma(\nabla^N\bar{h}^{N,\delta}(t,x/N))\\
		&\qquad\qquad\qquad\qquad\qquad\qquad\qquad\times
		\nabla_i^N \bar{h}^{N,\delta}(t,x/N) \\
		&\qquad {}+N^{-d-2}\sum_{x\in \overline{D_N}\cap (D_N-e_i)^\complement}
		\nabla_i^N k^{N,\delta}(t,x/N)
		\nabla_i^N \bar{h}^{N,\delta}(t,x/N) \\
		&=:F_1^N(t)+F_2^N(t)+F_3^N(t),
	\end{align*}
	where the set $\overline{\overline{D_N}}$ is defined by $\overline{\overline{D_N}}=\overline{\Bigl(\overline{D_N}\Bigr)}$.	
	Here, we have used 
	\[\nabla^N\bar{h}^{N,\delta}(t,x/N)=0,\quad x\in \integer^d\smallsetminus \overline{D_N}.\]
	Using \eqref{eq3.1a}, we have for $F_1^N(t)$
	\begin{align*}
		F_1^N(t)&\le -c_{-}N^{-d}\sum_{x\in \overline{\overline{D_N}}}
		\left|\nabla^N\bar{h}^{N,\delta}(t,x/N+e_i/N)-
		\nabla^N\bar{h}^{N,\delta}(t,x/N)\right|^2,
	\end{align*}
	which is nothing but our target. 
	From now on, we shall show the remaining terms
	vanish when $N\to\infty$.

	For $F_0^N(t)$, since we have
	\[|F_0^N(t)|
	\le 2N^{-2}\int_0^T\|\nabla^Nk^{N,\delta}(t)\|_{L^2(D)^d}^2\,dt+
	2N^{-2}\int_0^T\|\nabla^N\bar{h}^{N,\delta}(t)\|_{L^2(D)^d}^2\,dt,
	\]
	by the Schwarz inequality, we obtain
	\[\lim_{N\to\infty}\int_0^T|F_0^N(t)|\,dt=0\]
	from Proposition~\ref{prop2.2}.

	We shall next establish the bound for $F_2^N$ and $F^N_3$.
	To do so, we shall at first make an $L^2$ bound for $\nabla^N \bar{h}^N$
	on $B_N=\overline{D_N}\smallsetminus D_N$.
	Choosing $r,q>1$ such that $r=p/2,1/r+1/q=1$
	and applying the H\"older inequality, we obtain
	\begin{align}
	N^{-d}&\sum_{x\in\overline{B_N}}\left|\nabla^N
	\bar{h}^{N,\delta}(t,x/N)\right|^2 \nonumber \\
	&\le
	\frac{1}{r}\Bigl(N^{-d}\left|\overline{B_N}\right|\Bigr)^{r/2q}N^{-d}\sum_{x\in\overline{D_N}}\left|\nabla^N
	\bar{h}^{N,\delta}(t,x/N)\right|^p 
	+\frac{1}{q}\Bigl(N^{-d}\left|\overline{B_N}\right|\Bigr)^{1/2}.
	\label{eq2.23}
	\end{align}
	For $F^N_2(t)$, since we have
	\[|F_2^N(t)|\le N^{-d}\sum_{x\in\overline{B_N}}\left|\nabla^N
	\bar{h}^{N,\delta}(t,x/N)\right|^2,\]
	we obtain
	\[\lim_{N\to\infty}\int_0^T|F_2^N(t)|\,dt=0\]	
	from Proposition~\ref{prop2.8}.
	Also for $F^N_3(t)$, we have
	\begin{align*}
	|F_3^N(t)|\le & 2\alpha(N)N^{-d}\sum_{x\in\overline{B_N}}\left|\nabla^N
	\bar{h}^{N,\delta}(t,x/N)\right|^2 \\
	&\quad {}+2\alpha(N)^{-1}N^{-d}\sum_{x\in\overline{D_N}}\left|\nabla^N
	k^{N,\delta}(t,x/N)\right|^2
	\end{align*}
	for an arbitrary sequence $\{\alpha(N)\}$ of positive numbers.
	Choosing $\alpha(N)=N^{\epsilon}$ with $\epsilon>0$ small enough,
	we obtain
	\[\lim_{N\to\infty}\int_0^T|F_3^N(t)|\,dt=0.\]	
	from Proposition~\ref{prop2.8}.
	Summarizing above, we conclude \eqref{eq2.23b}.
\end{proof}
	
\subsection{Convergence of the solution for the discretized equation}
We shall show that the solution $\bar{h}^{N,\delta}$
of the discretized equation \eqref{discretized_PDE}
converges to the solution $h^\delta$
for the regularized equation \eqref{PDE1-reg}, when
the initial data is smooth enough. The goal is the following:
\begin{theorem}\label{th-conv-discrete-pde}
		Assume $h_0\in C_0^\infty(D)$. Then,
		the sequence of solutions $\{\bar{h}^{N,\delta}\}$
		for the discretized equation \eqref{discretized_PDE}
		with initial datum $h_0^N$
		converges as $N\to\infty$ to the unique solution $h^\delta(t)$
		of \eqref{PDE1-reg} with initial data $h_0$
		in the following sense:
		\[\lim_{N\to\infty}\|\bar{h}^{N,\delta}(t)-h^\delta(t)\|_{H^{1}(D)^*}=0\]
		holds for every $t>0$.
\end{theorem}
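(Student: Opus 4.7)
The plan is to combine the uniform bounds obtained in the preceding subsections with a compactness-plus-identification argument, using the oscillation inequality of Proposition~\ref{oscillation} to pass to the limit through the nonlinear term $\nabla\sigma^\delta$.

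First I would establish compactness of $\{\bar{h}^{N,\delta}\}$. By Proposition~\ref{prop2.2}, $\nabla^N \bar{h}^{N,\delta}$ is uniformly bounded in $L^\infty(0,T;L^2(D))$, while Corollary~\ref{cor2.4} yields uniform $1/2$-H\"older continuity in $t$ with values in $\|\cdot\|_{-1,N}$. Combined with the norm comparison \eqref{sobolev-norms}, an Aubin--Lions type argument produces a subsequence (not relabelled) and a function $h^*\in L^\infty(0,T;H_0^1(D))\cap C([0,T];H^1(D)^*)$ such that $\bar{h}^{N,\delta}(t)\to h^*(t)$ strongly in $H^1(D)^*$ uniformly on $[0,T]$, and $\nabla^N \bar{h}^{N,\delta}\rightharpoonup \nabla h^*$ weakly in $L^2((0,T)\times D)$. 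The prescribed initial data \eqref{initial_PDE} converges to $h_0$ in $L^2(D)$, so $h^*(0)=h_0$.

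Next I would identify $h^*$ with the unique weak solution $h^\delta$ of \eqref{PDE1-reg}. Applying the summation by parts \eqref{eq2.10b} to \eqref{discretized_PDE} yields, for any smooth test function $\varphi\in C_0^\infty(D)\cap V$,
\[
(\bar{h}^{N,\delta}(t)-\bar{h}^{N,\delta}(0),\varphi)_H = -\int_0^t N^{-d}\sum_{x\in\overline{D_N}} \nabla\sigma^\delta(\nabla^N \bar{h}^{N,\delta}(s,x/N))\cdot \nabla^N \varphi(x/N)\,ds.
\]
The left-hand side converges by the compactness step. The hard point is the nonlinear term on the right, since weak convergence of the discrete gradients is not enough to pass through $\nabla\sigma^\delta$. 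The oscillation inequality \eqref{eq2.23b} is exactly the discrete substitute for strong compactness: it permits replacing $\nabla^N \bar{h}^{N,\delta}$ inside $\nabla\sigma^\delta$ by its average over a mesoscopic block of side $K/N$, with an error vanishing as $N\to\infty$ and then $K\to\infty$ via the Lipschitz property \eqref{convex-reg} of $\nabla\sigma^\delta$ and the $L^p$-bound of Proposition~\ref{prop2.8} ($p>2$, giving uniform integrability). These block averages converge strongly in $L^2((0,T)\times D)$ to $\nabla h^*$, so by continuity of $\nabla\sigma^\delta$ the nonlinear integrand converges to $\nabla\sigma^\delta(\nabla h^*)\cdot\nabla\varphi$, which identifies $h^*$ as a weak solution of \eqref{PDE1-reg}.

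Since Theorem~\ref{thm-exist-uniq-PDE} applies to the regularized equation by virtue of \eqref{convex-reg}, uniqueness forces $h^*=h^\delta$; every subsequential limit therefore coincides, so the whole sequence $\bar{h}^{N,\delta}(t)$ converges to $h^\delta(t)$ in $H^1(D)^*$ uniformly in $t$, in particular for every fixed $t>0$. The essential difficulty is the passage through the nonlinearity; the oscillation estimate \eqref{eq2.23b} is doing the main work, and the smoothness input of Proposition~\ref{prop2.3}, which was needed to control $\nabla^N k^{N,\delta}$, is precisely the reason $\sigma$ has been regularized to $\sigma^\delta$.
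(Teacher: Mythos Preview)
Your compactness step is close to the paper's: both extract a subsequence using the uniform gradient bound (Proposition~\ref{prop2.2}), the time-equicontinuity (Corollary~\ref{cor2.4}), and the bounds on $k^{N,\delta}$ (Corollary~\ref{cor2.7}); the paper additionally passes through a polilinear interpolation to land in genuine $H^1_0(D)$. The identification step, however, is where your argument departs from the paper and breaks down. The paper does \emph{not} use the oscillation inequality (Proposition~\ref{oscillation}) here at all; that result is reserved for the stochastic part (Section~\ref{sec-pf-mainthm}), where it yields translation invariance of the coupled measure. For the deterministic discretized PDE the paper instead invokes the Minty--Browder monotonicity trick (``Step~3 of Proposition~I.2 in \cite{FS97}''): one passes to the limit in the weak formulation to get $\frac{d}{dt}h^* = -\Delta\dive \chi$ for some weak limit $\chi$ of $\nabla\sigma^\delta(\nabla^N\bar h^{N,\delta})$, and then uses the monotonicity \eqref{convex-reg} together with the energy inequality coming from strong convergence in $H$ to force $\chi = \nabla\sigma^\delta(\nabla h^*)$.

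Your route asserts instead that mesoscopic block averages of $\nabla^N\bar h^{N,\delta}$ converge \emph{strongly} in $L^2((0,T)\times D)$ to $\nabla h^*$, and this claim is unsupported. The oscillation inequality \eqref{eq2.23b} only controls one-lattice-step differences; for fixed $K$ it lets you replace $\nabla^N\bar h^{N,\delta}$ by its $K$-block average with an $o_N(1)$ error, but those averages still vary on the macroscopic scale $K/N\to 0$ and need not converge strongly. Concretely, a sequence oscillating on an intermediate scale $\varepsilon_N$ with $1/N \ll \varepsilon_N \ll 1$ satisfies the one-step oscillation bound yet converges only weakly, and its $K$-block averages (fixed $K$) inherit the same oscillation. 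Without strong convergence of the gradients you cannot pass through $\nabla\sigma^\delta$ by continuity alone; monotonicity is what does the real work, and that is precisely what the paper exploits.
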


\begin{proof}
	To simplify notations, we omit the parameter $\delta$ when no confusion
	arises.
	We shall at first show that we can take a subsequence $\{N'\}$ such that
	$\bar{h}^{N'}$ converges to the solution of \eqref{PDE1-reg}.
	We arbitrarily choose $f\in C_0^\infty(D)$ such that
	\[\int_D h_0(\theta)\,d\theta=\int_D f(\theta)\,d\theta\]
	and define $f^N$ by
	\[f^{N}(\theta)=N^d\int_{B(x/N,1/N)}f(\theta')\,d\theta',
	\quad \theta\in B(x/N,1/N),\,x\in\integer^d.\]
	We introduce the polilinear interpolation used in \cite{DGI00},
	that is, $\hat{h}^{N}$ is defined by follows:
	\begin{equation}\label{polilinear}
	\hat{h}^{N}(t,\theta)
	=\sum_{\alpha\in\{0,1\}^d}\left[\prod_{i=1}^d\left(
	\alpha_i\{N\theta_i\}+(1-\alpha_i)(1-\{N\theta_i\}\right)\right]
	\bar{h}^N\left(t,\frac{[N\theta]+\alpha}{N}\right),
	\end{equation}
	where $[\cdot]$ and $\{\cdot\}$ denote the integral and the fractional parts,
respectively. We also define $\hat{k}^N$ by the similar manner.
	We then have
	\[\sup_{N}\sup_{0\le t\le T}\|\hat{h}^{N}(t)\|_{H^1(D)}<\infty\]
	and
	\[\sup_{N}\sup_{0\le t\le T}\|\hat{k}^{N}(t)\|_{H^1(D)}<\infty\]
	by Proposition~\ref{prop2.2} and Corollary~\ref{cor2.7}.
	Using Proposition~\ref{prop2.2}, Corollary~\ref{cor2.4}, Corollary~\ref{cor2.7} and the bounds stated above, we can choose
	a subsequence $\{N'\}$ such that
	\begin{align*}
		\bar{h}^{N'}-f^N\to\bar{g}&\quad \text{strongly in $C([0,T],H)$,} \\
		\hat{h}^N\to\hat{h}&\quad \text{weakly in $L^2([0,T],H^1_0(D))$,} \\
		\bar{k}^N\to\bar{k}&\quad \text{weakly in $L^2([0,T]\times D)$,} \\
		\hat{k}^N\to\hat{k}&\quad \text{weakly in $L^2([0,T],H^1(D))$} 
	\end{align*}
	as $N'\to\infty$ for some $\bar{g},\hat{h},\bar{k},\hat{k}$.
	Letting $\bar{h}=\bar{g}+f$,
	we can easily see that $\bar{h}=\hat{h}$ and $\bar{k}=\hat{k}$.
	Furthermore, in this setting, for every $t>0$,
	$\|\bar{h}^{N'}(t)-f^N\|_{-1,N'}$
	converges to $\|\bar{g}(t)\|_H$ as $N'\to\infty$, see Proposition~\ref{prop-d1}.
	Applying the argument in Step~3 of Proposition~I.2 in \cite{FS97},
	we obtain
	that the limit $\bar{h}$ is the solution of \eqref{PDE1-reg}
	with initial data $\bar{h}_0$.
	Furthermore, the uniqueness for \eqref{PDE1-reg}
	implies that the sequence $\{\bar{h}^{N};N\ge 1\}$ itself
	converges to $\bar{h}$ strongly in $C([0,T],H^{1}(D)^*)$,
	which shows the conclusion.
\end{proof}

\section{Identification of equilibrium states}\label{sec-stationary}
In this section, let us study the structure of
the equilibrium states for the dynamics on
$(\integer^d)^*$ corresponding to \eqref{SDE1}.
We will focus our attention to the relationship
between stationarity and Gibbs property, since we have already
known that the family of extremal canonical Gibbs measures
coincides with the family of extremal grand canonical Gibbs measures
introduced by \cite{FS97}, see \cite{N02a} for details.

\subsection{Notations}
In order to characterize the equilibrium states, we shall prepare several notations precisely.
Note that we will follow the same manner as in \cite{FS97} and \cite{N03}.

Let $(\integer^d)^*$ be the set of all directed bonds $b=(x,y),\,
x,y\in\integer^d,|x-y|=1$ in $\integer^d$.
We write $x_b=x$ and $y_b=y$ for $b=(x,y)$.
We denote the bond $(e_i ,0)$ by $e_i $ again
if it doesn't cause any confusion.
For every subset $\Lambda$ of $\integer^d$, we denote
the set of all directed bonds included $\Lambda$ and touching $\Lambda$
by $\Lambda^*$ and $\overline{\Lambda^*}$, respectively.
That is,
\begin{align*}
	\Lambda^*&:=\{b\in(\integer^d)^*;\,x_b\in\Lambda
	\text{ and }y_b\in\Lambda\},\\
	\overline{\Lambda^*}&:=\{b\in(\integer^d)^*;\,x_b\in\Lambda
	\text{ or }y_b\in\Lambda\}.
\end{align*}

For $\phi=\{\phi(x);\,x\in\integer^d\}\in\real^{\integer^d}$,
the gradient $\nabla$ is defined by
\begin{gather*}
\nabla\phi(b):=\phi(x)-\phi(y),\quad
b=(x,y)\in(\integer^d)^*.
\end{gather*}
Now, let $\mathcal{X}$ be the family of
all gradient fields $\eta\in\real^{(\integer^d)^*}$ which satisfy the plaquette
condition (2.1) in \cite{FS97}, i.e., $\mathcal{X}=\{\eta\equiv\nabla\phi;\,\phi\in \real^{\integer^d}\}$.
Let $\mathbb{L}^2_r$ be the set of all $\eta\in\real^{(\integer^d)^*}$
such that
\[|\eta|_r^2:=\sum_{b\in(\integer^d)^*}|\eta(b)|^2e^{-2r|x_b|}<\infty.\]
We denote $\mathcal{X}_r=\mathcal{X}\cap\mathbb{L}^2_r$ equipped with the
norm $|\cdot|_r$.
%

In this section, we study the properties of stationary measures
for the dynamics $\eta_t\in \mathcal{X}$ governed by the SDEs
\begin{equation}\label{SDE2}
d\eta_t(b)=-\nabla U_\cdot(\eta_t)(b)\,dt+\sqrt{2}d\nabla \tilde{w}^{\integer^d}_t(b),\quad b\in
(\integer^d)^*,
\end{equation}
where $\{w^{\integer^d}_t(x);\,x\in\integer^d\}$ is the family of Gaussian
processes with mean zero and covariance structure
\[
	E[\tilde{w}^{\integer^d}_t(x)\tilde{w}^{\integer^d}_s(y)]=
	-\Delta_{\integer^d}(x,y)t\wedge s,\quad x,y\in\integer^d,t,s\ge0.
\]
Since the coefficients are Lipschitz continuous in $\mathcal{X}_r$,
this equation has the unique strong solution in $\mathcal{X}_r$
for every $r>0$.
Note that $\eta_t:=\nabla\phi_t$ defined from the solution
$\phi_t$ of the SDE \eqref{SDE1} on $D_N$ satisfies \eqref{SDE2} for $b\in \overline{D_N^*}$ and boundary conditions $\eta_t(b)=\nabla\psi^N(b)$ for $b\in(\integer^d)^*\smallsetminus \overline{D_N^*}$,
when replacing $\tilde{w}^{\integer^d}_t$ by $\tilde{w}_t$.

Let us introduce the infinitesimal generator associated with \eqref{SDE2}.
For $\Lambda\subset \integer^d$, we introduce the 
the differential operator $\mathscr{L}_{\Lambda}$
of second order by
%
\[
	\mathscr{L}_{\Lambda}=
	-4\sum_{x\in \Lambda}\partial_x(\Delta_{\Lambda}\partial_{\cdot})(x)
	+2\sum_{x\in \Lambda}(\Delta_{\Lambda} U_\cdot(\eta))(x)\partial_x,
\]
where $\partial_x$ is defined by
\[
\partial_x:=\sum_{b\in(\integer^d)^*;\,x_b=x}\frac{\partial}{\partial\eta(b)}
,\quad x\in \integer^d.
\]
and $\Delta_\Lambda$ is the Laplacian on $\Lambda$ defined by
\eqref{laplacian-gamma} with $\Gamma=\Lambda$.
We note that the operator $\mathscr{L}_{\integer^d}$
is the infinitesimal generator associated
with the dynamics $\eta_t$ defined by \eqref{SDE2}.
To make notations keep simple, we simply denote $\mathscr{L}_{\integer^d}$
by $\mathscr{L}$ if it does not cause any confusion.
We also note that
$\mathscr{L}_{D_N}$ is the infinitesimal generator associated
with $\eta_t=\nabla\phi_t$, where $\phi_t$ is the solution of \eqref{SDE1}.

\subsection{Gibbs measures}
In this subsection, we state the definition of Gibbs measures with details.
For a finite set $\Lambda\subset\integer^d$ and fixed $\xi\in\mathcal{X}$, we define
the affine space $\mathcal{X}_{\Lambda,\xi}\subset\mathcal{X}$
by
\[\mathcal{X}_{\overline{\Lambda^*},\xi}=\{\eta\in\mathcal{X};\, \eta(b)=\xi(b),\,b\in(\integer^d)^*
\smallsetminus \overline{\Lambda^*}\}.\]
We define the finite volume Gibbs measure $\mu_{\Lambda,\xi}$
on $\overline{\Lambda^*}$ by
\[
\mu_{\Lambda,\xi}(d\eta)=Z^{-1}_{\Lambda,\xi}\exp\left(
-\sum_{b\in\overline{\Lambda^*}}V(\eta(b))\right)d\eta_{\overline{\Lambda^*},\xi},
\]
where $d\eta_{\overline{\Lambda^*},\xi}$ is the Lebesgue measure on $\mathcal{X}_{\overline{\Lambda^*},\xi}$
and $Z_{\Lambda,\xi}$ is the normalizing constant.

Let $\mathcal{P}(\mathcal{X})$ be the set of all probability measures on
$\mathcal{X}$ and let $\mathcal{P}_2(\mathcal{X})$ be those
$\mu\in\mathcal{P}(\mathcal{X})$ satisfying $E^\mu[|\eta(b)|^2]<\infty$
for each $b\in(\integer^d)^*$. 
The measure $\mu\in\mathcal{P}_2(\mathcal{X})$ is sometimes
called tempered. 
Let $\mathcal{G}$ be the family of translation invariant, tempered Gibbs
measures $\mu\in\mathcal{P}_2(\mathcal{X})$
introduced by \cite{FS97}, 
namely, the family of translation invariant
$\mu\in\mathcal{P}_2(\mathcal{X})$ satisfying the
Dobrushin-Lanford-Ruelle equation
\begin{equation}\label{DLR-eq}
\mu(\cdot|\mathscr{F}_{(\integer^d)^*\smallsetminus\overline{\Lambda^*}})=\mu_{\Lambda,\xi}(\cdot),\quad \text{$\mu$-a.s. $\xi$},
\end{equation}
where $\mathscr{F}_{(\integer^d)^*\smallsetminus\overline{\Lambda^*}}$
is the $\sigma$-algebra generated by $\left\{\eta(b);\,b\in (\integer^d)^*\smallsetminus\overline{\Lambda^*}\right\}$.
Note that the dynamics $\eta_t$ given by \eqref{SDE2} is reversible under
$\mu\in\mathcal{G}$.
We denote the family of $\mu\in\mathcal{G}$ with ergodicity under spatial shifts
by $\mathcal{G}_{\mathrm{ext}}$. Properties of Gibbs measures
are studied quite well, see \cite{FS97} and \cite{DGI00} for details.

\subsection{Relationship between stationary measures and Gibbs measures}
It is not difficult to show that every Gibbs measure $\mu\in\mathcal{G}$
is stationary with respect to $\mathscr{L}$.
Let us show that the converse is also true 
and therefore the stationarity is equivalent to the Gibbs property.
\begin{theorem}\label{th_stationary}
	If $\mu\in\mathcal{P}_2(\mathcal{X})$ is translation invariant
	and satisfies
	\[\int_{\mathcal{X}}\mathscr{L}F(\eta)\,d\mu=0\]
	for every $F\in C^2_{b,\mathrm{loc}}(\mathcal{X})$, then
	$\mu\in\mathcal{G}$.
\end{theorem}

The proof of Theorem~\ref{th_stationary} is similar to \cite{DNV11},
which is based on \cite{F82a}.
We at first introduce $\Phi_{\lambda}:\real\to\real$ by
\[\Phi_{\lambda}(u)=\frac{\lambda}{a}\left(1+(\lambda u)^2\right)^{-m},\]
where
\[a=\int_{\real}(1+u^2)^{-m}du.\]
For $\Lambda_{n}:=[-n,n]^d\cap\integer^d$
we define $\Phi^{\lambda}_{n}:\mathcal{X}_{\Lambda_n^*}\to\real$ by
\[\Phi^{\lambda}_{n}(\eta)=\prod_{x\in\Lambda_n}\Phi_{\lambda}(\phi^{\eta,0}(x)),\]
where $\phi^{\eta,c}$ is the height variable satisfying $\nabla\phi^{\eta,c}=\eta$ and
$\phi^{\eta,c}(0)=c$. Note that $\phi^{\eta,c}$ is uniquely determined by
$\eta$ and $c$.
We also define $p^\lambda_n(\eta)$ by
\[p^\lambda_n(\eta)=\int \Phi^{\lambda}_n(\eta-\xi)\mu(d\xi).\]
Let $\Psi^\lambda_n(\eta,\xi)=\Phi^{\lambda}_{n}(\xi-\eta)$. Since $\Psi^\lambda_n(\cdot,\xi)\in C^2_{\mathrm{loc}}(\mathcal{X})$, we have
\[\int \mathscr{L}\Psi^\lambda_n(\cdot,\xi)(\eta)\mu(d\eta)=0.\]
Multiplying 
$F(\xi)\in C^2_{b,\mathrm{loc}}(\mathcal{X})$ whose support is in $\Lambda_n^{*}$, and
integrating in $\xi$ by the uniform measure $d\xi_{\Lambda_n}$ on
\[
	\mathcal{X}_{\Lambda_n}=\{\nabla\phi\in\real^{\Lambda_n^*};
	\,\phi\in\real^{\Lambda_n}\},
\]
we obtain
\begin{equation}\label{eq3.1}
	\iint F(\xi)\mathscr{L}\Psi^\lambda_n(\cdot,\xi)(\eta)
	\mu(d\eta)d\xi_{\Lambda_n}=0.
\end{equation}

Here, as in Lemma~3.2 in \cite{DNV11}, if $f:\real^{\integer^d}\to\real$ is smooth, local
and the form 
$f(\phi)=F(\nabla\phi)$ for some $F:\mathcal{X}\to\real$, we then have
\[
	\frac{\partial f}{\partial \phi(x)}=2\sum_{b:x_b=x}\frac{\partial F}{\partial \eta(b)}(\nabla\phi).
\]
Noting the above and the relationship
\begin{gather*}
	\frac{\partial\Psi^\lambda_n(\nabla\cdot,\nabla\psi)}{\partial\phi(x)}(\phi)
	=-\frac{\partial \Phi^{\lambda}_n}{\partial\phi(x)}(\nabla\psi-\nabla\phi)=
	-\frac{\partial\Psi^\lambda_n(\nabla\phi,\nabla\cdot)}{\partial\psi(x)}(\psi) \\
	\frac{\partial^2\Psi^\lambda_n(\nabla\cdot,\nabla\psi)}{\partial\phi(x)^2}(\phi)
	=\frac{\partial^2 \Phi^{\lambda}_n}{\partial\phi(x)^2}(\nabla\psi-\nabla\phi)
	=\frac{\partial^2\Psi^\lambda_n(\nabla\phi,\nabla\cdot)}{\partial\psi(x)^2}(\psi)
\end{gather*}
for $x\in\integer^d$ by the symmetricity of $\Phi^{\lambda}$,
the left hand side of \eqref{eq3.1} is calculated as follows:
\begin{align*}
	&\iint F(\nabla\psi)\sum_{x\in \Lambda_n}\sum_{y\in \Lambda_n}
	(-\Delta)(x,y)\frac{\partial^2\Psi_n^\lambda(\eta,\nabla\cdot)}{\partial\psi(x)\partial\psi(y)}\nu_{\Lambda_n,p}(d\psi)\mu(d\eta) \\
	&\quad {}+\iint F(\nabla\psi)\sum_{x\in \Lambda_n}
	\sum_{y\in \integer^d}(-\Delta)(x,y)U_y(\eta)
	\frac{\partial\Psi_n^\lambda(\eta,\nabla\cdot)}{\partial\psi(x)}
	\nu_{\Lambda_n,p}(d\psi)\mu(d\eta) \\
	&=:I_1+I_2,
\end{align*}
where $\nu_{\Lambda_n,p}$ is the measure on $\real^{\Lambda_n}$ defined by
\[\nu_{\Lambda_n,p}(d\psi)=p(\psi(0))\prod_{x\in\Lambda_n}d\psi(x)\]
with a probability density $p$ on $\real$. We have used that
the image measure  of $\nu_{\Lambda_n,p}$ by
the discrete gradient $\nabla$ coincides with $d\xi_{\Lambda_n}$.
Performing the integration-by-parts for $I_1$, we have
We shall first  calculate $I_1$. Performing integration-by-parts in $\psi$, we have
\begin{align}
	I_1&=-\iint \sum_{x\in \Lambda_n}\sum_{y\in \Lambda_n}(-\Delta)(x,y)
	\frac{\partial F(\nabla\cdot)}{\partial \psi(x)} 
	\frac{\partial \Psi^\lambda_n(\eta,\nabla\cdot)}{\partial\psi(y)}\nu_{\Lambda_n,p}(d\psi)\mu(d\eta)  \nonumber \\
	&\qquad {}-\iint \sum_{y\in\Lambda_n}(-\Delta)(x,y)
	F(\nabla\psi) 
	\frac{\partial \Psi^\lambda_n(\eta,\nabla\cdot)}{\partial\psi(0)}p'(\psi(0))
	\prod_{x\in\Lambda_n}d\psi(x)\mu(d\eta) . \label{eq3.2}
\end{align}
Noting that integrands of $I_1$ and the first term in the right hand side of \eqref{eq3.2} are
function of $\nabla\psi$, each integral does not depend on the choice of $p$
and therefore the second term does not also. Taking a sequence $p_n$ such that $p_n'\to0$ as $n\to\infty$,
we conclude that the second term must be zero.

Let us choose $F$ as
\begin{equation}\label{eq3.4}
	F(\nabla\psi)=f\left(\frac{p^\lambda_n(\nabla\psi)}{q_n(\nabla\psi)}\right),
\end{equation}
with some bounded smooth function $f:\real\to\real$ and
\[q_n(\eta)=\exp\left(-H^\Lambda(\eta)\right),\quad \eta\in\mathcal{X}_{\Lambda_n^*},\]
where $Z_n$ is the normalizing constant.
Noting
\[\frac{\partial p^\lambda_n(\nabla\cdot)}{\partial \psi(x)}=\int\frac{\partial \Psi^\lambda_n(\eta,\nabla\cdot)}{\partial\psi(x)}\mu(d\eta)\]
and putting
\[
	r_n^\lambda(\eta)=\frac{p^\lambda_n(\eta)}{q_n(\eta)},
\]
we have
\begin{align}
I_1
&=-\sum_{x\in\Lambda_n}\sum_{y\in\Lambda_n}(-\Delta)(x,y)\int  f'\left(r^\lambda_n(\nabla\psi)\right)
\frac{\partial r_n^\lambda(\nabla\cdot)}{\partial \psi(x)}
\frac{\partial r_n^\lambda(\nabla\cdot)}{\partial \psi(y)}
q_n(\nabla\psi)\nu_{\Lambda_n,p}(d\psi)  \nonumber \\
	&\qquad {}+\sum_{x\in\Lambda_n}\sum_{y\in\Lambda_n}(-\Delta)(x,y)
	\int f'\left(r^\lambda_n(\nabla\psi)\right)
U^\Lambda_{y}(\nabla\psi)
	p^\lambda_n(\nabla\psi)
	\nu_{\Lambda_n,p}(d\psi). \label{eq2.2c}
\end{align}
Next, we shall compute $I_2$. Performing the integration-by-parts in $\psi(x)$ again, we have
\begin{align}
	I_2
	&=\sum_{x\in \Lambda_n}
	\sum_{y\in \integer^d\smallsetminus \Lambda_n}(-\Delta)(x,y)\iint 
	\frac{\partial F(\nabla\psi)}{\partial\psi(x)}
	U_y(\eta)\Psi^\lambda_n(\eta,\nabla\psi)
	\nu_{\Lambda_n,p}(d\psi)\mu(d\eta) \nonumber \\
	& \qquad {}-\sum_{x\in\Lambda_n}\sum_{y\in\Lambda_n}(-\Delta)(x,y)
	\iint \frac{\partial F(\nabla\psi)}{\partial\psi(x)}
	\left(U_y(\eta)-U^\Lambda_y(\eta)\right) \nonumber \\
	&\qquad \qquad \qquad \qquad \qquad \qquad \qquad \qquad \times
	\Psi^\lambda_n(\eta,\nabla\psi)\nu_{\Lambda_n,p}(d\psi)\mu(d\eta) \nonumber 
	\\
	& \qquad {}-\sum_{x\in\Lambda_n}\sum_{y\in\Lambda_n}(-\Delta)(x,y)
	\iint \frac{\partial F(\nabla\psi)}{\partial\psi(x)}
	\left(U_y^\Lambda(\eta)-U_y^\Lambda(\nabla\psi)\right) \nonumber \\
	&\qquad \qquad \qquad \qquad \qquad \qquad \qquad \qquad \times
	\Psi^\lambda_n(\eta,\nabla\psi)\nu_{\Lambda_n,p}(d\psi)\mu(d\eta) \nonumber 
	\\
	& \qquad {}-\sum_{x\in\Lambda_n}\sum_{y\in\Lambda_n}(-\Delta)(x,y)
	\iint \frac{\partial F(\nabla\psi)}{\partial\psi(x)}
	U_y^\Lambda(\nabla\psi)
	p^\lambda_n(\nabla\psi)\nu_{\Lambda_n,p}(d\psi) \nonumber \\
	&=:R_1^\lambda(n,f)+R_2^\lambda(n,f)+R_3^\lambda(n,f)+R_4^\lambda(n,f).
	\label{eq3.5}
\end{align}
Summarizing \eqref{eq3.2} and \eqref{eq3.5}, we obtain
\begin{align}
	F^\lambda(n,f)&:=\sum_{x\in\Lambda_n}\sum_{y\in\Lambda_n}(-\Delta)(x,y)
	\int  f'\left(r^\lambda_n(\nabla\psi)\right)
\frac{\partial r_n^\lambda(\nabla\cdot)}{\partial \psi(x)}
\frac{\partial r_n^\lambda(\nabla\cdot)}{\partial \psi(y)} \\
	&\qquad \qquad \qquad \qquad \qquad \qquad \qquad \qquad \times
q_n(\nabla\psi)\nu_{\Lambda_n,p}(d\psi) \nonumber \\
	&=R_1^\lambda(n,f)+R_2^\lambda(n,f)+R_3^\lambda(n,f)
	\label{eq3.5c}
\end{align}
if we take $F$ as in \eqref{eq3.4}. 
Note that $F^\lambda(n,f)$ is finite when $f(x)=\log x$
by using $|\Delta(x,y)|\le 2d$, see Lemma~3.3 of \cite{DNV11}.
We denote $F^\lambda(n,f)$ with $f(x)=\log x$ simply by $F^\lambda(n)$.

From now on, we shall show that terms $R_i^\lambda(n,f)$ 
in the right hand side can be controlled
by $F^\lambda(n)$. 
\begin{lemma}
	Assume that the function $f$ satisfies $0\le uf'(u)\le 1$ for every $u>0$.
	We then have bounds for $R^\lambda_{1}(n,f), R^\lambda_{2}(n,f)$
	and $R^\lambda_{3}(n,f)$ in \eqref{eq3.5} as follows:
	\begin{gather}
		\left|R^\lambda_{1}(n,f)\right|\le K C(n)^{1/2}F^\lambda(n)^{1/2} \label{eq3.6}\\
		\left|R^\lambda_{2}(n,f)\right|\le K \lambda^{-1}F^\lambda(n)^{1/2} \label{eq3.7} \\
		\left|R^\lambda_{3}(n,f)\right|\le K C(n)^{1/2}F^\lambda(n)^{1/2} \label{eq3.8}
	\end{gather}
	with a constant $K>0$ independent in $n$ and $\lambda$, where $C_x(n)$ is
	defined by
	\begin{gather*}
		C(n)=\sum_{x\in\Lambda_{n+1}}\sum_{b\in(\integer^d)^*\smallsetminus\Lambda^*:x_b=x}\int c^2_b(\xi,n,\mu)
		\mu(d\xi), \\
		c_b(\xi,n,\mu)=\int V'(\eta(b))\mu(d\eta|\mathscr{F}_{(\Lambda_n^*)^\complement})(\xi).
	\end{gather*}
	Here, $\mathscr{F}_{(\Lambda_n^*)^\complement}$ is $\sigma$-algebra
	generated by $\{\eta(b);b\in (\Lambda_n^*)^\complement\}$.
\end{lemma}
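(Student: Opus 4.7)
My plan is to derive all three bounds from a common Cauchy-Schwarz argument against the positive semi-definite quadratic form generated by $-\Delta$. For $F$ of the form \eqref{eq3.4}, the chain rule gives $\partial F(\nabla\psi)/\partial\psi(x) = f'(r^\lambda_n(\nabla\psi))\,\partial r^\lambda_n(\nabla\cdot)/\partial\psi(x)$, and the bond decomposition $(-\Delta)(x,y)=\sum_b(\delta_b\mathbf{1}_x)(\delta_b\mathbf{1}_y)$ with $\delta_b g = g(x_b)-g(y_b)$ lets me rewrite
\[
F^\lambda(n) = \int f'(r^\lambda_n)\,q_n\sum_b|\delta_b r^\lambda_n|^2\,d\nu_{\Lambda_n,p},
\]
so that $F^\lambda(n)^{1/2}$ is the norm of the bond field $(\sqrt{f'(r^\lambda_n)q_n}\,\delta_b r^\lambda_n)_b$ in $L^2(d\nu_{\Lambda_n,p};\ell^2)$. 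Each $R^\lambda_i(n,f)$ will be written in a similar bond form and then bounded by $F^\lambda(n)^{1/2}$ times the $\ell^2$-norm of a suitable ``remainder'' bond field.

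For \eqref{eq3.6}, the indices $y\notin\Lambda_n$ in $R^\lambda_1(n,f)$ make $V'(\eta(b))$ appearing in $U_y(\eta)$ non-measurable with respect to $\mathscr{F}_{\Lambda_n^*}$, but $\Psi^\lambda_n(\eta,\nabla\psi)$ depends on $\eta$ only through its restriction to $\Lambda_n^*$. I would therefore condition on $\mathscr{F}_{\Lambda_n^*}$ inside the $\mu$-integral, replacing each $V'(\eta(b))$ by $c_b(\eta,n,\mu)$, and then apply Cauchy-Schwarz in the bond decomposition to extract $F^\lambda(n)^{1/2}$. Using $uf'(u)\le 1$ to bound $f'(r^\lambda_n)(p^\lambda_n)^2/q_n \le p^\lambda_n$, the complementary $L^2$-norm collapses, after successive integration of $\Psi^\lambda_n$ against $\mu$ and then against $\nu_{\Lambda_n,p}$, to precisely $C(n)^{1/2}$, as required.

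The bound \eqref{eq3.8} for $R^\lambda_3(n,f)$ follows the same template. The Lipschitz continuity $|V'(u)-V'(v)|\le c_+|u-v|$ converts $U^\Lambda_y(\eta)-U^\Lambda_y(\nabla\psi)$ into a pointwise estimate in $|\eta(b)-\nabla\psi(b)|$ for bonds $b\in\Lambda_n^*$ incident to $y$, and the conditional-expectation argument combined with the concentration of $\Psi^\lambda_n$ near $\eta=\nabla\psi$ again yields a $C(n)^{1/2}$ factor matched against $F^\lambda(n)^{1/2}$ after Cauchy-Schwarz.

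The estimate \eqref{eq3.7} for $R^\lambda_2(n,f)$ is the main obstacle. Here $U_y(\eta)-U^\Lambda_y(\eta)$ is a sum of $V'(\eta(b))$ over bonds crossing $\partial\Lambda_n^*$, and $\eta(b)$ is again not $\mathscr{F}_{\Lambda_n^*}$-measurable; a naive conditioning would produce only $C(n)^{1/2}$, whereas a sharper $\lambda^{-1}$ factor is needed. The plan is to use the symmetry $\partial\Psi^\lambda_n(\eta,\nabla\cdot)/\partial\psi(x) = -\partial\Psi^\lambda_n(\nabla\cdot,\nabla\psi)/\partial\phi(x)$ to transfer the $\psi$-derivative in $\partial F/\partial\psi$ onto the $\eta$-variable inside $\Psi^\lambda_n$, and then to exploit the explicit form $\Phi_\lambda(u)=(\lambda/a)(1+(\lambda u)^2)^{-m}$, whose first moments $\int |u|\Phi_\lambda(u)\,du$ are of order $\lambda^{-1}$, to produce the required gain. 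The delicate point is to carry out this transfer so that the product structure of $\Phi^\lambda_n$ over the $|\Lambda_n|$ sites does not spoil the bookkeeping, and the resulting Cauchy-Schwarz bound remains uniform in $n$ with $\lambda^{-1}$ as the only price.
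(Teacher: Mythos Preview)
Your overall framework is exactly that of the paper: you use the Cauchy--Schwarz inequality for the nonnegative form $(-\Delta)$ (the paper writes this as \eqref{schwarz_delta}, which is equivalent to your bond decomposition), and you invoke $0\le uf'(u)\le 1$ to show that the factor carrying $f'(r^\lambda_n)\,\partial r^\lambda_n/\partial\psi(x)$ is controlled by $F^\lambda(n)^{1/2}$. Your treatment of $R^\lambda_1$ via conditioning on $\mathscr{F}_{\Lambda_n^*}$ is also the paper's argument.

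The difficulty is in your handling of $R^\lambda_2$ and $R^\lambda_3$, where the reasoning does not match the mechanisms you invoke. For $R^\lambda_3$ the difference $U^\Lambda_y(\eta)-U^\Lambda_y(\nabla\psi)$ involves only bonds $b\in\Lambda_n^*$; every such $\eta(b)$ is already $\mathscr{F}_{\Lambda_n^*}$--measurable, so the ``conditional-expectation argument'' does nothing here. What you actually describe --- Lipschitz continuity of $V'$ combined with the concentration $\int|u|^2\Phi_\lambda(u)\,du=O(\lambda^{-2})$ --- produces, after Cauchy--Schwarz, a complementary factor of order $\lambda^{-1}n^{d/2}$ (there are $O(n^d)$ interior bonds, each contributing $O(\lambda^{-2})$), \emph{not} $C(n)^{1/2}$. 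Conversely, for $R^\lambda_2$ the factor $U_y(\eta)-U^\Lambda_y(\eta)$ is supported on bonds crossing $\partial\Lambda_n$, which are precisely the bonds entering the definition of $C(n)$; the conditioning argument you dismiss as ``naive'' already yields $KC(n)^{1/2}F^\lambda(n)^{1/2}$, and no further $\lambda^{-1}$ gain is available from the symmetry trick you propose (transferring the $\psi$--derivative back onto $\eta$ just returns you to the expression before the integration by parts, and $\int|\Phi_\lambda'(u)|\,du$ scales like $\lambda$, not $\lambda^{-1}$).

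In short, you have identified the two correct mechanisms --- conditioning on $\mathscr{F}_{\Lambda_n^*}$ for boundary bonds, and the moment bounds (2.12)--(2.13) of \cite{DNV11} for the concentration of $\Phi^\lambda_n$ --- but you have attached them to the wrong remainder terms when trying to reproduce the specific right-hand sides of \eqref{eq3.7} and \eqref{eq3.8}. The paper's own proof is terse and simply refers to \cite{DNV11} for these two estimates; you should check there whether the bounds as displayed in the lemma have the labels for $R_2^\lambda$ and $R_3^\lambda$ assigned as you expect, since the natural arguments give $C(n)^{1/2}$ for the boundary term and a $\lambda^{-1}$--type bound for the interior one. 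Either assignment suffices for the subsequent inequality $F^\lambda(n,f)\le\tfrac12 F^\lambda(n)+K'(C(n)+\lambda^{-1}n^d)$, which is all that is used downstream.
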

\begin{proof}
	We at first obtain 
	\begin{align*}
	\iint &f'\left(\frac{p^\lambda_n}{q_n}\right)^2
	\sum_{x\in\Lambda_n}
	\sum_{y\in\Lambda_n}
	(-\Delta)(x,y)
	\frac{\partial r_n^\lambda}{\partial\psi(x)}
	\frac{\partial r_n^\lambda}{\partial\psi(y)}
	\Psi^\lambda_n(\eta,\nabla\psi)\nu_{\Lambda_n,p}(d\psi)\mu(d\eta) \\
	&\le \iint \left(\frac{p^\lambda_n}{q_n}\right)^{-2}
		\sum_{x\in\Lambda_n}
	\sum_{y\in\Lambda_n}
	(-\Delta)(x,y)
	\frac{\partial r_n^\lambda}{\partial\psi(x)}
	\frac{\partial r_n^\lambda}{\partial\psi(y)}
p^\lambda_n(\nabla\psi)\nu_{\Lambda_n,p}(d\psi) \\
&= \int \left(\frac{p^\lambda_n}{q_n}\right)^{-1}
	\sum_{x\in\Lambda_n}
	\sum_{y\in\Lambda_n}
	(-\Delta)(x,y)
	\frac{\partial r_n^\lambda}{\partial\psi(x)}
	\frac{\partial r_n^\lambda}{\partial\psi(y)}
q_n(\nabla\psi)\nu_{\Lambda_n,p}(d\psi)=F^\lambda(n)
\end{align*}
by assumption on $f$. Note that $(-\Delta)$ is nonnegative
definite, indeed, we have
\[
	\sum_{x\in\Lambda_m}(-\Delta)(x,y)\phi(x)\phi(y)\ge 0
\]
for every $m\ge1$ and $\phi\in\real^{\integer^d}$ such that $\phi(x)=0$ on $\Lambda_m^\complement$.
Furthermore, we have the Schwarz inequality
of the following form:
\begin{align}
	&\left|\sum_{x\in\integer^d}
	\sum_{y\in\integer^d}(-\Delta)(x,y)\phi(x)\psi(y)\right| \nonumber \\
	&\quad \le
	\left(\sum_{x\in\integer^d}
	\sum_{y\in\integer^d}(-\Delta)(x,y)\phi(x)\phi(y)\right)^{1/2}
	\left(\sum_{x\in\integer^d}
	\sum_{y\in\integer^d}(-\Delta)(x,y)\psi(x)\psi(y)\right)^{1/2}
	\label{schwarz_delta}
\end{align}
for every $\phi,\psi\in\real^{\integer^d}$ with
$\phi(x)=\psi(x)=0$ on $\Lambda_m^\complement$ for some $m\ge1$.
Using the above, we obtain
\begin{align*}
\left|R^\lambda_{1}(n,f)\right|
&\le C F^\lambda(n)^{1/2}C(n)^{1/2}
\end{align*}
for some constant $C>0$, which shows \eqref{eq3.6}.
We can also obtain \eqref{eq3.7} and \eqref{eq3.8} by the similar argument
to the above. 
\end{proof}

Let us continue the calculation for $F^\lambda(n,f)$.
Summarizing \eqref{eq3.5c}-\eqref{eq3.8}, we get
\[
F^\lambda(n,f)\le \frac{1}{2}F^\lambda(n)+
 K'(C(n)+\lambda^{-1}n^d)
\]
with a constant $K'>0$. Using Fatou's lemma, we conclude
\begin{equation}\label{eq3.11}
\frac{1}{2}F^\lambda(n)\le K'(C(n)+\lambda^{-1}n^d).
\end{equation}
We shall next give a lower bound for the left hand side of \eqref{eq3.11}. 
For $\ell\in\natural$, let us take $\tilde\Lambda\subset\Lambda_n$ by
\[\tilde\Lambda=\bigcup_{x\in\mathscr{A}_{n,\ell}}\Lambda_\ell(x),\]
where $\Lambda_\ell(x)=\Lambda_\ell+x$ and
\[
\mathscr{A}_{n,\ell}=\{x\in((2\ell+3)\integer)^d;\Lambda_\ell(x)\subset\Lambda_{n-1}\}.
\]
Because boxes $\Lambda_\ell(x)$ appearing above are disjoint, we get
\begin{align*}
F^\lambda(n)&=\frac{1}{2}\int\sum_{x\in\Lambda_n}\sum_{y\in\integer^d;|x-y|=1}
\left(\frac{\partial \sqrt{r_n^\lambda}}{\partial\psi(x)}
-\frac{\partial \sqrt{r_n^\lambda}}{\partial\psi(y)}\right)^2
q_n(\nabla\psi)\nu_{\Lambda_n,p}(d\psi) \\
&\ge\frac{1}{2}\int\sum_{z\in\mathscr{A}_{n,\ell}}
\sum_{x\in \Lambda_\ell(z)}
\sum_{y\in\Lambda_\ell(z);|x-y|=1}
\left(\frac{\partial \sqrt{r_n^\lambda}}{\partial\psi(x)}
-\frac{\partial \sqrt{r_n^\lambda}}{\partial\psi(y)}\right)^2 \\
&\qquad \qquad \qquad \qquad \qquad \qquad \qquad \qquad \times
q_n(\nabla\psi)\nu_{\Lambda_n,p}(d\psi) \\
&=\frac{1}{2}\sum_{z\in\mathscr{A}_{n,\ell}}I^{\Lambda_\ell(x)}(\mu_n^\lambda),
\end{align*}
where $I^\Lambda$ is the entropy production rate defined by
\[
	I^{\Lambda}(\tilde\mu)
	:=\sup\left\{\int\frac{-\mathscr{L}^{\Lambda}u}{u}d\tilde\mu;\,
	u\in C_{b}^2(\mathcal{X}),\,\mathscr{F}_{\overline{\Lambda^*}}\text{-measurable},\,u\ge 1\right\}
\]
for a finite $\Lambda\subset\integer^d$ and $\tilde\mu$ on $\mathcal{X}$
or $\mathcal{X}_{\Lambda_n}$ with $n$ large enough.
Applying \eqref{eq3.11} and taking the limit $\lambda\to\infty$, we have
\[
\frac{1}{2}\sum_{z\in\mathscr{A}_{n,\ell}}I^{\Lambda_\ell(x)}(\mu)
\le 4K'C(n)
\]
by the lower semicontinuity of the entropy production rate.
Since $I^{\Lambda_\ell(x)}(\mu_n)$ does not depend on $x$ by the
translation invariance of $\mu$ and $C(n)=O(n^{d-1})$, we get
\begin{equation}\label{eq3.12}
I^{\Lambda_\ell}(\mu)\le C\ell^dn^{-1}
\end{equation}
with a constant $C>0$ independent of $n$.
Taking the limit $n\to\infty$, 
we finally conclude that
\begin{equation}\label{entropy-bound2}
I^{\Lambda_\ell}(\mu)=0
\end{equation}
for every $\ell\in\natural$. Repeating the same argument as in
the proof of Lemma~5.2 in \cite{N02a}, we obtain
that $\mu$ is a canonical Gibbs measure introduced in \cite{N02a}.
Applying Theorem~3.1 in \cite{N02a}, we conclude $\mu\in \mathcal{G}$.

\section{Proof of Theorem~\ref{hydro}}\label{sec-pf-mainthm}
In this section, we shall give the proof of our main result, Theorem~\ref{hydro}. Before that, we shall prepare several bounds, which play key role in the proof.
\subsection{A priori bounds for stochastic processes}
In this subsection, let us establish $L^2$-bound for the stochastic
interface $h^N$. 
\begin{prop}\label{apriori-SDE}
	There exist constants $C_1,C_2>0$ independent of $N$
	such that
	\[E\|h^N(t)\|^2_{-1,N}+N^{-d}E\int_0^t\sum_{x\in\overline{D_N^*}}\left(\nabla\phi^N_s(x)\right)^2\,ds
	\le C_1E\|h^N(0)\|^2_{-1,N}+C_2(1+t)\]
	holds for every $t\ge0$.
\end{prop}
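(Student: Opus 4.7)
The plan is to mirror the argument used for the discretized PDE in the previous subsection, but replace the time derivative with Itô's formula applied along the trajectory of the SDE \eqref{SDE1} written in the rescaled time $\phi^N_t := \phi_{N^4 t}$, so that $d\phi^N_t = -N^4(-\Delta_{D_N})U_\cdot(\phi^N_t)\,dt + \sqrt{2}\,N^2\,d\tilde w_t$. Choose the deterministic auxiliary function $f^N$ (with underlying $\psi^N$) from Section~\ref{subsec-apriori-pde} with $v = N^{-d-1}\langle \phi^N_0\rangle$, and note that $\sum_{x\in D_N}\phi^N_t(x)$ is conserved by \eqref{SDE1} (because $-\Delta_{D_N}$ annihilates constants and $\tilde w_t$ lives in the zero-sum subspace $\mathscr{A}_N$). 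Hence $\bar\phi^N_t := \phi^N_t - \psi^N$ remains in $\mathscr{A}_N$, and $N^{-2d-2}\langle\phi^N_t\rangle^2$ in the definition of $\|h^N(t)\|_{-1,N}^2$ is constant and bounded by a constant multiple of $\|h^N(0)\|_{-1,N}^2$.

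I would then apply Itô's formula to the functional
\[
F(\phi):=N^{-d-4}\bigl\langle \phi-\psi^N,(-\Delta_{D_N})^{-1}(\phi-\psi^N)\bigr\rangle.
\]
Using $(-\Delta_{D_N})^{-1}(-\Delta_{D_N})=\mathrm{Id}$ on $\mathscr{A}_N$ and the fact that $\bar\phi^N_t$ has zero mean, the drift contribution reduces to $-2N^{-d}\langle \bar\phi^N_t, U(\phi^N_t)\rangle\,dt$; the quadratic-variation term, using $d[\tilde w(x),\tilde w(y)]=-\Delta_{D_N}(x,y)\,dt$, produces the Itô correction $2N^{-d}\,\mathrm{tr}_{\mathscr{A}_N}(\mathrm{Id})\,dt = O(1)\,dt$; and the remainder is a martingale (made rigorous by a standard stopping-time localization using the $V''\le c_+$ bound). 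This is the stochastic counterpart of the computation leading to \eqref{eq2.8}.

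To estimate the drift, summation by parts combined with Assumption~\ref{ass-potential} (symmetry $V(\eta)=V(-\eta)$ implies $V'(0)=0$, so $V'(\eta)\eta\ge c_-\eta^2$) and the vanishing of both $\phi^N_t$ and $\psi^N$ on $\integer^d\smallsetminus D_N$ yields
\[
\langle \phi^N_t, U(\phi^N_t)\rangle \;\ge\; c_-\sum_{b\in \overline{D_N^*}}(\nabla\phi^N_t(b))^2,
\]
while $|\langle \psi^N, U(\phi^N_t)\rangle|$ is controlled by Cauchy--Schwarz, $|V'(\eta)|\le c_+|\eta|$, and the uniform bound $|\nabla^N f^N|\le c_g|v|$ from \eqref{bound_aux}. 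Young's inequality then absorbs half of the Dirichlet energy into the good term and leaves a constant of order $|v|^2$. Integrating in $t$, taking expectations (the martingale vanishes), and using that $F$ differs from $\|h^N(t)\|_{-1,N}^2$ only by $O(1)$-quantities coming from the conserved volume and from $\|\psi^N\|_{-1,N}^2$, gives the stated bound.

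The main obstacles are two: (i)~the extra Itô correction that has no PDE analogue and must not degenerate in $N$, which is precisely why the factor $N^{-d-4}$ in the definition of $\|\cdot\|_{-1,N}^2$ is matched to the $N^2$ noise amplitude and the $|D_N|\sim N^d$ dimension of the state space; and (ii)~the handling of the boundary bonds in $\overline{D_N^*}\setminus D_N^*$ during summation by parts, which is harmless here because both $\phi^N_t$ and $\psi^N$ vanish off $D_N$ and the convexity bound on $\eta V'(\eta)$ applies uniformly.
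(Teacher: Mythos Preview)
Your proposal is correct and follows essentially the same route as the paper: apply It\^o's formula to $\|h^N(t)-f^N\|_{-1,N}^2$ with the auxiliary $f^N$ built from the bump function of Section~\ref{subsec-apriori-pde} and scaled by the conserved volume $v=\langle\phi_0^N\rangle$, use $(-\Delta_{D_N})^{-1}(-\Delta_{D_N})=\mathrm{Id}$ on $\mathscr A_N$ to reduce the drift to $-2N^{-d}\sum_{b\in\overline{D_N^*}}(\nabla\phi^N_t(b)-\nabla\psi^N(b))V'(\nabla\phi_t^N(b))$, bound the cross term via $|\nabla\psi^N|\le c_g|v|$ and Young's inequality, and absorb the It\^o correction $2N^{-d}|D_N|\,dt$ into the constant $C_2(1+t)$; the only cosmetic slip is calling $f^N$ ``deterministic'' when $v$ is $\mathcal F_0$-measurable rather than constant, but this does not affect the argument.
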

\begin{proof}
	Let us use the function $g^N$ and $\zeta^N$
	introduced at Section~\ref{subsec-apriori-pde}.
	We define $\psi^N$ by
	\begin{equation*}
	\psi^N(x)=\langle \phi_0^N\rangle\zeta^N(x),\quad x\in\integer^d
	\end{equation*}
	where $\langle \phi_0^N\rangle$ is defined by
	\[\langle \phi_0^N\rangle=N^{-d-1}\sum_{y\in D_N}\phi_0^N(y).\]
	We denote the macroscopic height variable associated with the
	microscopic height variable $\psi^N$ by $f^N$, that is,
	\[f^N(\theta)=\sum_{x\in\integer^d}N^{-1}\psi^N(x)1_{B(x/N,1/N)}(\theta),
	\quad\theta\in\real^d.\]
	Calculating $\|h^N(t)-f^N\|_{-1,N}^2$ by It\^o's formula, we obtain
	{\allowdisplaybreaks\begin{align*}
	d\|h^N(t)-f^N\|^2_{-1,N} 
	&=-2N^{-d}\sum_{b\in \overline{D_N^*}}
	\left(\nabla\phi^N_t(b)-\nabla\psi^N(b)\right)V'(\nabla\phi_t^N(b))\,dt \\
	&\quad {}+2N^{-d}\sum_{x\in D_N}(-\Delta_{D_N})^{-1}
	\left(\phi^N_t-\psi^N\right)(x)\,d\tilde{w}_t(y) \\	
	&\quad {}+2N^{-d}|D_N|\,dt.
	\end{align*}}
	Therefore, integrating in $t$ and taking the expectation, we get
	{\allowdisplaybreaks\begin{align}\nonumber
	E\|h^N(T)-f^N\|^2_{-1,N}&=E\|h^N(0)-f^N\|^2_{-1,N} \\
	&\quad {}-2E\int_0^TN^{-d}\sum_{b\in \overline{D_N^*}}
	\nabla\phi^N_t(b)V'(\nabla\phi_t^N(b))\,dt \nonumber \\
	&\quad {}+2E\int_0^TN^{-d}\sum_{b\in \overline{D_N^*}}
	\nabla\psi^N(b)V'(\nabla\phi_t^N(b))\,dt \nonumber \\
	&\quad {}+2N^{-d}|D_N|T. \label{eq3.3}
	\end{align}}
	Applying \eqref{bound_aux} to the third term in the right hand side, we get
	\begin{align*}
	\left|2N^{-d}\sum_{b\in \overline{D_N^*}}\nabla \psi^N(b)
	V'(\nabla\phi^N_t(b))\right|
	&\le 2c_f|\langle \phi_0^N\rangle|
	\left(N^{-d}\sum_{b\in \overline{D_N^*}}
	\left(V'(\nabla\phi^N_t(b))\right)^2\right)^{1/2} \\
	&\le 2c_f|\langle \phi_0^N\rangle|c_{+}\left(N^{-d}\sum_{b\in \overline{D_N^*}}
	\left(\nabla\phi^N_t(b)\right)^2\right)^{1/2} \\
	&\le 2\gamma N^{-d}\sum_{b\in \overline{D_N^*}}
	\left(\nabla\phi^N_t(b)\right)^2 \\
	&\qquad {}+8\gamma^{-1}c_f^2|\langle \phi_0^N\rangle|^2c_{+}^2
	\end{align*}
	for every $\gamma>0$. Plugging the above with $\gamma=c_{-}/2$
	into \eqref{eq3.3}, we finally obtain
	\begin{align*}
	E\|h^N(T)-f^N\|^2_{-1,N}
	&\le E\|h^N(0)-f^N\|^2_{-1,N} \\
	&\quad {}-c_{-}E\int_0^TN^{-d}\sum_{b\in \overline{D_N^*}}
	\left(\nabla\phi^N_t(b)\right)^2\,dt \\
	&\quad {}+2N^{-d}|D_N|T+16c_{-}^{-1}c_{+}^2c_f^2|\langle \phi_0^N\rangle|^2T.
	\end{align*}
	Noting $|\langle \phi_0^N\rangle|^2
	\le \|h^N(0)\|_{-1,N}^2$, we obtain the conclusion.
\end{proof}

\subsection{Coupled local equilibria}
We shall introduce the coupled measure and
identify its limit point, as in \cite{FS97} and \cite{N03}.
Let us denote the discrete gradient of the solution $\bar{h}^{N,\delta}$
of \eqref{discretized_PDE} by $u_s^{N,\delta}$, that is,
\[
u_s^{N,\delta}(x)=\nabla^N\bar{h}^{N,\delta}(s,x/N),\quad x\in\integer^d,
\]
and the law of $\nabla\phi_s^N$ on $\mathcal{X}$ by $\mu_s^N$.
Using these notations, we introduce the coupled measure $p^N(d\eta\,du)$ on
$\mathcal{X}\times\real^d$ by
\begin{equation}\label{coupled}
p^{N,\delta}(d\eta\,du)=t^{-1}|\overline{D_N}|^{-1}
\int_0^t\sum_{x\in \overline{D_N}}(\mu_s^N\circ\tau_x^{-1})(d\eta)
\delta_{u_s^{N,\delta}(x)}(du)\,ds,
\end{equation}
where $\tau_x$ is the spatial shift by $x\in\integer^d$.
We note that the sequence $\{p^N\}$ is tight as the probability measures
on $\mathcal{X}\times\real^d$ since we have Proposition~\ref{apriori-SDE} and
Proposition~\ref{prop2.2}.

\begin{prop}
	For every limit point $p$ of $\{p^{N,\delta}\}$, there exists a probability measure
	$\lambda(du\,dv)$ on $\real^d\times\real^d$ such that
	the relationship
	\[p(d\eta\,du)=\int_{\real^d}\mu_v(d\eta)\lambda(du\,dv)\]
	holds with ergodic Gibbs measures $\{\mu_u;\,u\in\real^d\}$
	introduced by \cite{FS97}.
\end{prop}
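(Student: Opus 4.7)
The plan is to extract a weakly convergent subsequence from the tight family $\{p^{N,\delta}\}$ (tightness follows from Proposition~\ref{apriori-SDE} for the $\eta$-marginal and Proposition~\ref{prop2.8} for the $u$-marginal) and to characterize any limit point $p$ by four successive reductions: translation invariance of $p$ in $\eta$; conditional stationarity of $p$ under the infinite-volume generator $\mathscr{L}$; identification of the conditional as a canonical Gibbs measure through Theorem~\ref{th_stationary}; and ergodic decomposition using the classification of extremal translation-invariant canonical Gibbs measures from \cite{FS97} and \cite{N02a}. Translation invariance in $\eta$ is immediate from the spatial average $|\overline{D_N}|^{-1}\sum_{x\in\overline{D_N}}$ in \eqref{coupled}, after discarding a boundary defect of order $O(N^{-1})$ controlled by Proposition~\ref{apriori-SDE}.

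The key computation is the conditional stationarity identity: for every bounded continuous $g:\real^d\to\real$ and every $F\in C^2(\mathcal{X})$ with compact support,
\[
\int g(u)\,\mathscr{L}F(\eta)\,p(d\eta\,du)=0.
\]
I would prove this by applying It\^o's formula to $F\circ\tau_x(\nabla\phi^N_\cdot)$ for $x\in\overline{D_N}$ whose shifted support lies strictly inside the bulk of $D_N$, so that $\mathscr{L}_N$ and $\mathscr{L}$ agree there. Under the $N^4$ time acceleration,
\[
N^4 E\int_0^t \mathscr{L}F(\tau_x\nabla\phi^N_{N^4 s})\,ds =E\bigl[F(\tau_x\nabla\phi^N_{N^4 t})\bigr]-E\bigl[F(\tau_x\nabla\phi^N_0)\bigr],
\]
whose right-hand side is bounded uniformly in $x$, $N$, and $t$, so that the $s$-integral on the left is $O(N^{-4})$. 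Since $u^{N,\delta}_s(x)$ is deterministic, being determined by the solution of \eqref{discretized_PDE}, the weight $g(u^{N,\delta}_s(x))$ factors outside the expectation. Summing over $x$ and normalizing by $t|\overline{D_N}|$ then produces $\int g(u)\mathscr{L}F(\eta)\,p^{N,\delta}(d\eta\,du)=O(N^{-4})+O(N^{-1})$, with the second error absorbing the boundary $x$'s through the $L^2$-bound of Proposition~\ref{apriori-SDE} and the linear growth of $V'$. Passing to the limit along the subsequence yields the identity.

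Disintegrating $p(d\eta\,du)=q(d\eta\,|\,u)\nu(du)$, this identity forces $\int\mathscr{L}F\,dq(\cdot\,|\,u)=0$ for $\nu$-a.e.\ $u$ and all local smooth $F$; combined with translation invariance and temperedness inherited from $p$, Theorem~\ref{th_stationary} identifies $q(\cdot\,|\,u)$ as a canonical Gibbs measure, and the ergodic decomposition together with the classification in \cite{FS97} and \cite{N02a} produces a kernel $\kappa(dv\,|\,u)$ with $q(\cdot\,|\,u)=\int_{\real^d}\mu_v\,\kappa(dv\,|\,u)$. Setting $\lambda(du\,dv):=\nu(du)\kappa(dv\,|\,u)$ completes the proof. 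The main obstacle is the conditional stationarity step, specifically checking that the weighting by the slowly varying profile $g(u^{N,\delta}_s(x))$ does not obstruct the It\^o martingale cancellation; this is resolved by the fact that $u^{N,\delta}_s(x)$ carries no randomness, so the weighting commutes with the expectation needed for the telescoping identity.
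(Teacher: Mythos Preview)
Your proposal has two genuine gaps.

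First, translation invariance of the $\eta$-marginal of $p$ is \emph{not} immediate from the spatial averaging in \eqref{coupled}. The coupled measure ties the shift $\tau_x^{-1}$ acting on $\eta$ to the weight $\delta_{u_s^{N,\delta}(x)}$ at the \emph{same} site $x$. If you shift $\eta$ by a unit vector $e$ and re-index $x\mapsto x+e$ in the $\eta$-factor, the $u$-weight becomes $g(u_s^{N,\delta}(x-e))$ rather than $g(u_s^{N,\delta}(x))$; the discrepancy is a bulk term, not a boundary defect. To kill it you must show that $u_s^{N,\delta}(x)-u_s^{N,\delta}(x-e)\to 0$ in an averaged sense over $x$ and $s$. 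This is precisely Proposition~\ref{oscillation}, which the paper invokes at this step and which in turn rests on the $L^p$ bound of Proposition~\ref{prop2.8}; it is not captured by Proposition~\ref{apriori-SDE}.

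Second, in the stationarity step the weight $g(u_s^{N,\delta}(x))$ depends on $s$, so commuting it with the expectation is not enough to inherit the $O(N^{-4})$ bound from the It\^o telescoping. Your identity gives only that the \emph{unweighted} time integral $\int_0^t E[\mathscr{L}F(\tau_x\nabla\phi^N_{N^4s})]\,ds$ is $O(N^{-4})$; when you insert the $s$-dependent factor $g(u_s^{N,\delta}(x))$ into the integrand you cannot bound the result by $\|g\|_\infty$ times something small, because only the integral of $E[\mathscr{L}F]$ is small, not the integral of its absolute value. The paper handles this by integrating by parts in $s$, which produces a term containing $\partial_s\varphi(u_s^{N,\delta}(x))$, and then controls that term through the bound on $\|\nabla^N\partial_t\bar h^{N,\delta}\|_{L^2}$ from Proposition~\ref{prop2.3}. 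Your proposal neither invokes Proposition~\ref{prop2.3} nor addresses the time variation of the weight, and your closing remark (``the weighting commutes with the expectation needed for the telescoping identity'') resolves only the absence of randomness, not the $s$-dependence, which is the actual obstruction.
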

\begin{proof}
	To keep notation simple, let us omit the parameter $\delta$ 
	when no confusion arises.
	 
	For $\varphi\in C_b^1(\real^d)$, we define the signed measure
	$p^N(d\eta,\varphi)$ on $\mathcal{X}$ by
	\[p^N(d\eta,\varphi)=\int_{\real^d}p^N(d\eta\,du)\varphi(u).\]
	It is sufficient to show that every limit point $p(d\eta,\varphi)$ of $\{p^N(d\eta,\varphi)\}$ is translation invariant
	and satisfies
	\begin{equation}\label{invariance}
	\int_\mathcal{X}\mathscr{L}F(\eta)p(d\eta,\varphi)=0
	\end{equation}
	for every $F\in C^2_b(\mathcal{X})$ with a compact support, see Theorem~4.1 of \cite{FS97}.
	
	We shall at first show the limit point $p(d\eta,\varphi)$ is translation invariant.
	For $F\in C^2_b(\mathcal{X})$ and $e\in\integer^d$ such that
	$|e|=1$, we have
	\if0
	\begin{align*}
		\int_\mathcal{X}&F(\eta)p^N(d\eta,\varphi)
		-\int_\mathcal{X}F(\eta)p^N(\cdot,\varphi)\circ\tau_e(d\eta)
		\\
		&=
		t^{-1}|\overline{D_N}|^{-1}
\int_0^t\sum_{x\in \overline{D_N}}E^{\mu_s^N\circ\tau_x^{-1}}[F(\eta)]
		\varphi(u^N(s,x/N))\,ds \\
		&\qquad {}-t^{-1}|\overline{D_N}|^{-1}
\int_0^t\sum_{x\in \overline{D_N}}E^{\mu_s^N\circ\tau_x^{-1}}[F(\tau_e\eta)]
		\varphi(u^N(s,x/N))\,ds \\
		&=
		t^{-1}|\overline{D_N}|^{-1}
\int_0^t\sum_{x\in \overline{D_N}}E^{\mu_s^N\circ\tau_x^{-1}}[F(\eta)]
		\varphi(u^N(s,x/N))\,ds \\
		&\qquad {}-t^{-1}|\overline{D_N}|^{-1}
\int_0^t\sum_{x\in \overline{D_N}}E^{\mu_s^N\circ\tau_{x+e}^{-1}}[F(\eta)]
		\varphi(u^N(s,x/N))\,ds \\
		&=
		t^{-1}|\overline{D_N}|^{-1}
\int_0^t\sum_{x\in \overline{D_N}}E^{\mu_s^N\circ\tau_x^{-1}}[F(\eta)]
		\varphi(u^N(s,x/N))\,ds \\
		&\qquad {}-t^{-1}|\overline{D_N}|^{-1}
\int_0^t\sum_{x\in \overline{D_N}+e}E^{\mu_s^N\circ\tau_{x}^{-1}}[F(\eta)]
		\varphi(u^N(s,x/N-e/N))\,ds
	\end{align*}
	and therefore
	\fi
	\begin{align*}
		&\left|\int_\mathcal{X}F(\eta)p^N(d\eta,\varphi)
		-\int_\mathcal{X}F(\eta)p^N(\cdot,\varphi)\circ\tau_e(d\eta)\right| \\
		&\quad \le
		t^{-1}|\overline{D_N}|^{-1}\|F\|_{\infty}\|\varphi'\|_{\infty}
\int_0^t\sum_{x\in \overline{\overline{D_N}}}
		\left|u^N(s,x/N)-u^N(s,x/N-e/N)\right|\,ds \\
		&\qquad {}+|\overline{D_N}|^{-1}\|F\|_{\infty}\|\varphi\|_{\infty}
		\left(\left|\overline{D_N}\cap (\overline{D_N}+e)^\complement\right|+
		\left|(\overline{D_N}+e)\cap \overline{D_N}^\complement\right|\right).
	\end{align*}
	Since the first term vanishes as $N\to\infty$ by Proposition~\ref{oscillation}, we obtain
	\[\lim_{N\to\infty}\left|\int_\mathcal{X}F(\eta)p^N(d\eta,\varphi)
		-\int_\mathcal{X}F(\eta)p^N(\cdot,\varphi)\circ\tau_e(d\eta)\right|=0,\]
	which shows that the limit $p(d\eta,\varphi)$ is translation invariant.
	
	Let us show \eqref{invariance}.
	Fix $F\in C^2_b(\mathcal{X})$ with compact support, and take
	$L\in\natural$ such that
	$\mathrm{supp}(F)\subset \Lambda^*_L$, where
	$\Lambda_L=[-L,L]^d\cap \integer^d$.
	We then obtain for $N$ large enough
	{\allowdisplaybreaks
	\begin{align*}
		\int_\mathcal{X}\mathscr{L}&F(\eta)p^N(d\eta,\varphi) \\
		&=t^{-1}|D_N|^{-1}\int_0^t\sum_{x\in \overline{D_N}}
		\varphi(u_s^N(x))E^{\mu_s^N\circ\tau_x}[\mathscr{L}F(\eta)]\,ds \\
		&=t^{-1}|D_N|^{-1}\int_0^t\sum_{x\in \mathscr{A}_{N,L}}
		\varphi(u_s^N(x))E^{\mu_s^N}[\mathscr{L}_{D_N}(F\circ\tau_x^{-1})(\eta)]\,ds \\
		&\qquad {}+t^{-1}|D_N|^{-1}\int_0^t
		\sum_{x\in \overline{D_N}\smallsetminus\mathscr{A}_{N,L}}
		\varphi(u_s^N(x))E^{\mu_s^N}[\mathscr{L}_{D_N}
		(F\circ\tau_x^{-1})(\eta)]\,ds \\
		&\qquad {}+t^{-1}|D_N|^{-1}\int_0^t\sum_{x\in \overline{D_N}}
		\varphi(u_s^N(x))E^{\mu_s^N\circ\tau_x}[\mathscr{L}_{\integer^d\smallsetminus D_N}F(\eta)]\,ds \\
		&=:F_1^N+F_2^N+F_3^N,
	\end{align*}}
	where the set $\mathscr{A}_{N,L}$ is defined by
	\[\mathscr{A}_{N,L}=\overline{D_N}\cap\left(\bigcup_{z\in\partial D_N}
	(\Lambda_{2L}+z)\right)^\complement.\]
	Since $\mathrm{supp}(F\circ\tau^{-1}_x)\subset \overline{D_N}$
	if $x\in \mathscr{A}_{N,L}$, we obtain
	\begin{align*}
		F_1^N&=t^{-1}|D_N|^{-1}N^{-4}\int_0^t\sum_{x\in \mathscr{A}_{N,L}}
		\varphi(u_s^N(x))E^{\mu_s^N}[(F\circ\tau_x^{-1})(\nabla\phi^N_t)] \\
		&\quad {}-t^{-1}|D_N|^{-1}N^{-4}\int_0^t\sum_{x\in \mathscr{A}_{N,L}}
		\varphi(u_s^N(x))E^{\mu_s^N}[(F\circ\tau_x^{-1})(\nabla\phi^N_0)] \\
		&\quad {}-t^{-1}|D_N|^{-1}N^{-4}\int_0^t\sum_{x\in \mathscr{A}_{N,L}}
		\frac{\partial}{\partial s}\varphi(u_s^N(x))
		E^{\mu_s^N}[(F\circ\tau_x^{-1})(\nabla\phi^N_s)]\,ds
	\end{align*}	
	by It\^o's formula. Since $F\in C^2_b(\mathcal{X})$, we obtain
	\begin{align*}
		|F_1^N|&\le C_1N^{-4}+C_2N^{-4}\int_0^t\left\|\frac{\partial}{\partial s}\varphi(u_s^N(x))\right\|_{L^1}\,ds
	\end{align*}	
	with some constants $C_1,C_2>0$ independent of $N$. Combining
	the above with Proposition~\ref{prop2.3}, we have
	\[\lim_{N\to\infty}|F_1^N|=0.\]

	For $F_2^N$, we have
	\begin{align*}
		|F_2^N|
		&\le t^{-1}\|\varphi\|_\infty |D_N|^{-1}\int_0^t
		\sum_{x\in \overline{D_N}\smallsetminus\mathscr{A}_{N,L}}
		E^{\mu_s^N}[\left|\mathscr{L}_{D_N}
		(F\circ\tau_x^{-1})(\eta)\right|]\,ds \\
		&\le t^{-1}\|\varphi\|_\infty |D_N|^{-1}\int_0^t
		\sum_{x\in \overline{D_N}\smallsetminus\mathscr{A}_{N,L}}
		\sum_{y\in \Lambda_{L}+x}
		E^{\mu_s^N}[\left|\mathscr{L}_{y}
		(F\circ\tau_x^{-1})(\eta)\right|]\,ds \\
		&\le C_3 |D_N|^{-1}
		\left|\overline{D_N}\smallsetminus\mathscr{A}_{N,L}\right| \\
		&\quad {}+ C_4|D_N|^{-1}\int_0^t
		\sum_{x\in \overline{D_N}\smallsetminus\mathscr{A}_{N,L}}
		\sum_{b\in(\integer^d)^*:x_b\in \Lambda_{2L}+x}
		E^{\mu_s^N}[\left|V'(\nabla\phi_s^N(b)\right|]\,ds \\
		&=:F_{2,1}^N+F_{2,2}^N
	\end{align*}
	with some constants $C_3,C_4>0$ independent of $N$.
	We can easily see that 
	\[\lim_{N\to\infty}F_{2,1}^N=0.\]
	For $F_{2,2}^N$, applying the Schwarz inequality, we obtain
	\begin{align*}
		F_{2,2}^N
		&\le  C_5N^{-1/2}|D_N|^{-1}E\int_0^t
		\sum_{b\in\overline{D_N^*}}\left|\nabla\phi_s^N(b)\right|^2\,ds 
		+C_6N^{1/2}|D_N|^{-1}|\partial D_N|
	\end{align*}
	with some constants $C_5,C_6>0$ independent of $N$, which indicates
	\[\lim_{N\to\infty}F^N_{2,2}=0\]
	from Proposition~\ref{apriori-SDE}. We therefore conclude
	\[\lim_{N\to\infty}F^N_{2}=0.\]
	Since we obtain
	\begin{align*}
		F_{3}^N
		&\le  C_7N^{-1/2}|D_N|^{-1}E\int_0^t
		\sum_{b\in\overline{D_N^*}}\left|\nabla\phi_s^N(b)\right|^2\,ds 
		+C_8N^{1/2}|D_N|^{-1}|\partial D_N|,
	\end{align*}
	with some constants $C_7,C_8>0$ independent of $N$
	by a similar argument to that for $F_2^N$, 
	we obtain
	\[\lim_{N\to\infty}F^N_{3}=0.\]
	Summarizing the above, the identity \eqref{invariance}
	is concluded.
\end{proof}
\subsection{Derivation of the macroscopic equation under \eqref{smoothness-initial}} \label{HDL-smooth-init}
Let us first prove Theorem~\ref{hydro} under the following assumption:
$h^N(0)$ is given by $h^N(0)=\bar{h}_0^N$, where $\bar{h}_0^N$ is
defined by \eqref{initial_PDE} with $h_0\in C_0^\infty(D)$.
Since we have
\begin{align*}
	\|h^N(t)-h(t)\|_{H^{1}(D)^*}
	&\le \|h^N(t)-\bar{h}^{N,\delta}(t)\|_{H^{1}(D)^*} \\
	&\qquad {}+\|\bar{h}^{N,\delta}(t)-h^{\delta}(t)\|_{H^{1}(D)^*} \\
	&\qquad {}+\|h^{\delta}(t)-h(t)\|_{H^{1}(D)^*},
\end{align*}
it is sufficient for our goal to show 
\begin{equation}\label{eq4.4}
	\limsup_{\delta\to0}\limsup_{N\to\infty}E\left\|h^N(t)-\bar{h}^{N,\delta}(t)\right\|_{-1,N}^2
	=0,
\end{equation}
by using Proposition~\ref{prop-sobolev-norms2}. 
Using It\^o's formula, we obtain
{\allowdisplaybreaks\begin{align*}
	E&\|h^N(t)-\bar{h}^{N,\delta}(t)\|^2_{-1,N} \\
	&=-tN^{-d}\Bigl|\overline{D_N}\Bigr|\sum_{i =1}^d
	\int_{\mathcal{X}\times\real^d}
	\bigl\{\eta(e_i )V'(\eta(e_i ))-u_i  V'(\eta(e_i ))-\eta(e_i )\nabla_i \sigma^\delta(u) \\
	& \phantom{\qquad {}-tN^{-d}\Bigl|\overline{D_N}\Bigr|\sum_{i =1}^d
	\int_{\mathcal{X}\times\real^d}}
	{}+u_i \nabla_i \sigma^\delta(u)-1
	\bigr\}\,p^{N,\delta}(d\eta\,du)+	tN^{-d}|\partial D_N|,
\end{align*}}%
where $p^{N,\delta}$ is the coupled measure introduced by \eqref{coupled}.
Applying the same argument as in the Section~6 of \cite{FS97}
with Propositions~\ref{prop2.8} and \ref{apriori-SDE}, we
conclude \eqref{eq4.4}.

\subsection{Derivation of the macroscopic equation in general cases}
Let us remove the assumption imposed at Section~\ref{HDL-smooth-init}
and complete the proof of Theorem~\ref{hydro}.
For this aim, we prepare the following lemma:
\begin{lemma}\label{approx-SDE}
	Let $\phi_t$ and $\tilde{\phi}_t$ be the solution of \eqref{SDE1}
	with common Gaussian processes $\{\tilde{w}_t(x);\,x\in D_N\}$ and let
	$h^N$ and $\tilde{h}^N$ be the macroscopic height
	variables corresponding to $\phi_t$ and $\tilde{\phi}_t$, respectively.
	Then, for every $t>0$ and $N\ge1$
	\[E\|h^N(t)-\tilde{h}^N(t)\|_{-1,N}^2
	\le E\|h^N(0)-\tilde{h}^N(0)\|_{-1,N}^2\]
	holds.
\end{lemma}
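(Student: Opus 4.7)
\medskip

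\textbf{Proof proposal.} The plan is to exploit the fact that both processes are driven by the same Gaussian noise $\tilde w_t$, so that the difference $\zeta_t:=\phi_t-\tilde\phi_t$ satisfies a pathwise (deterministic, given the two trajectories) ordinary differential equation with no noise term, and then to show that the relevant quadratic form in $\zeta_t$ is monotone non-increasing pathwise. Passing to expectations is then automatic.

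First, I will decompose the $-1,N$ norm as in Section~\ref{sec-sobolev} into its fluctuation part and its mean part and handle them separately. Using that the rows and columns of $\Delta_{D_N}$ sum to zero inside $D_N$ (antisymmetry of bond contributions), one checks that $\sum_{x\in D_N}(-\Delta_{D_N})U_\cdot(\phi_t)(x)=0$ pathwise and that $\sum_{x\in D_N}\tilde w_t(x)=0$ almost surely (its variance is proportional to $\sum_{x,y}(-\Delta_{D_N})(x,y)=0$). Hence the mean $\langle\zeta_t\rangle$ is constant in $t$, so the contribution $N^{-2d-2}\langle\zeta_t\rangle^2$ to $\|h^N(t)-\tilde h^N(t)\|_{-1,N}^2$ is exactly its initial value.

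Next, since the noise cancels,
\begin{equation*}
\frac{d}{dt}\zeta_t(x)=-(-\Delta_{D_N})\bigl[U_\cdot(\phi_t)-U_\cdot(\tilde\phi_t)\bigr](x),\quad x\in D_N,
\end{equation*}
with $\zeta_t\equiv0$ on $\integer^d\smallsetminus D_N$ by the shared Dirichlet boundary condition \eqref{bc}. Writing $\tilde\zeta_t:=\zeta_t-\langle\zeta_t\rangle\in\mathscr{A}_N$ (same dynamics, since the subtracted constant is $t$-independent) and differentiating, the self-adjointness of $-\Delta_{D_N}$ on $\mathscr{A}_N$ gives
\begin{align*}
\frac{d}{dt}\sum_{x\in D_N}\tilde\zeta_t(x)(-\Delta_{D_N})^{-1}\tilde\zeta_t(x)
&=-2\sum_{x\in D_N}\bigl[U_x(\phi_t)-U_x(\tilde\phi_t)\bigr]\tilde\zeta_t(x) \\
&=-2\sum_{x\in D_N}\bigl[U_x(\phi_t)-U_x(\tilde\phi_t)\bigr]\zeta_t(x),
\end{align*}
the last equality because $\sum_{x\in D_N}[U_x(\phi_t)-U_x(\tilde\phi_t)]=0$ by the antisymmetry argument above.

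Finally, I will perform a bond reorganization using $V'(-\eta)=-V'(\eta)$ (from Assumption~\ref{ass-potential}(2)): pairing the contribution at $x$ with the contribution at its neighbor $y$ (with the convention $\zeta_t(y)=0$ for $y\notin D_N$) turns the sum into
\begin{equation*}
\sum_{\{x,y\}:|x-y|=1,\,\{x,y\}\cap D_N\neq\emptyset}\bigl[V'(a_{xy})-V'(b_{xy})\bigr](a_{xy}-b_{xy}),
\end{equation*}
where $a_{xy}=\phi_t(x)-\phi_t(y)$ and $b_{xy}=\tilde\phi_t(x)-\tilde\phi_t(y)$. By the strict convexity bound $V''\ge c_{-}>0$ in Assumption~\ref{ass-potential}(3), each summand is $\ge c_{-}(a_{xy}-b_{xy})^2\ge0$. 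Therefore the time derivative of the fluctuation part is non-positive; combined with the constancy of the mean part, this yields the pathwise inequality, and taking expectations concludes the proof. The only part requiring real care is the bookkeeping at the boundary bonds — making sure that the contribution of each bond $\{x,y\}$ with $x\in D_N$, $y\in D_N^c$ is correctly recovered using $\zeta_t(y)=0$ — but this is exactly where the Dirichlet condition \eqref{bc} on both $\phi_t$ and $\tilde\phi_t$ makes the summation-by-parts close cleanly.
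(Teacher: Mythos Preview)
Your approach is the one the paper intends (it omits the proof, saying ``the quite same argument as in the proof of Proposition~\ref{apriori-SDE} can be applicable''), and the reduction to a noiseless ODE for $\zeta_t=\phi_t-\tilde\phi_t$ followed by differentiation of the quadratic form is exactly right.

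There is, however, a genuine gap at the step where you pass from $-2\sum_{x\in D_N}[U_x(\phi_t)-U_x(\tilde\phi_t)]\,\tilde\zeta_t(x)$ to the same expression with $\zeta_t$ in place of $\tilde\zeta_t$, via the claim $\sum_{x\in D_N}[U_x(\phi_t)-U_x(\tilde\phi_t)]=0$. The antisymmetry you invoke applies to $\Delta_{D_N}$ (zero column sums), not to $U$. Since $U_x(\phi)=\sum_{y:|x-y|=1}V'(\phi(x)-\phi(y))$ includes bonds leaving $D_N$, the sum $\sum_{x\in D_N}U_x(\phi)$ retains the uncancelled boundary contributions $\sum_{x\in D_N,\,y\notin D_N,\,|x-y|=1}V'(\phi(x))$; for the difference this becomes $\sum\bigl(V'(\phi_t(x))-V'(\tilde\phi_t(x))\bigr)$ over inner-boundary sites, which has no reason to vanish.

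This is not cosmetic. Your bond reorganization correctly shows $\sum_{x}[U_x(\phi_t)-U_x(\tilde\phi_t)]\,\zeta_t(x)\ge0$, but the computation actually delivers $\tilde\zeta_t$, and the discrepancy $-\bar\zeta\sum_x[U_x(\phi_t)-U_x(\tilde\phi_t)]$ can have either sign. Already for Gaussian $V$ on three sites with $\zeta_0=(1-\epsilon,1,1)$ and small $\epsilon>0$ one checks that the fluctuation part of the norm initially \emph{increases}; so the pathwise monotonicity you claim fails as written.

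Everything is fine under the extra hypothesis $\langle\phi_0\rangle=\langle\tilde\phi_0\rangle$: both totals are conserved, so $\tilde\zeta_t=\zeta_t$ for all $t$ and the offending step is unnecessary. This is exactly the situation in Proposition~\ref{apriori-SDE}, where the auxiliary $\psi^N$ is chosen with $\langle\psi^N\rangle=\langle\phi_0^N\rangle$; that matching of totals is what makes the ``same argument'' go through.
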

Noting that $\phi_t$ and $\tilde{\phi}_t$ satisfy
the same boundary condition, the quite same argument
as in the proof of Proposition~\ref{apriori-SDE}
can be applicable. We therefore omit the proof.

We shall approximate $h_0\in L^2(D)$ by $h_0^\epsilon\in C^2_0(D)$
in the sense of
\[\lim_{\epsilon\to0}\|h_0^\epsilon-h_0\|_{H^{1}(D)^*}=0.\]
Let $\phi^{\epsilon}_t$ be the solution of \eqref{SDE1}
with common Gaussian processes $\{\tilde{w}_t(x);\,x\in D_N\}$ as $\phi_t$
and with initial data $\phi_0^{\epsilon}$ which are defined by
\[\phi_0^{\epsilon}(x)=
\begin{cases}
N^{d+1}\int_{B(x/N,1/N)}h_0^\epsilon(\theta),& x\in D_N, \\
0,& x\in \integer^d\smallsetminus D_N.
\end{cases}\]
Letting $h^{N,\epsilon}$ be the macroscopic height
variable corresponding to $\phi^{\epsilon}_t$
and $h^{\epsilon}$ be the solution of \eqref{PDE1}
with the initial data $h_0^\epsilon$, we then have
\begin{align*}
	E\|&h^N(t)-h(t)\|^2_{H^{1}(D)^*} \\
	&\le 4E\|h^N(t)-h^{N,\epsilon}(t)\|^2_{H^{1}(D)^*}
	+4E\|h^{N,\epsilon}(t)-h^{\epsilon}(t)\|^2_{H^{1}(D)^*} \\
	& \qquad {}+8\|h^{\epsilon}(t)-h(t)\|^2_{H^{1}(D)^*} \\
	&\le 4E\|h^N(t)-h^{N,\epsilon}(t)\|^2_{-1,N}
	+4E\|h^{N,\epsilon}(t)-h^{\epsilon}(t)\|^2_{H^{1}(D)^*} \\
	& \qquad {}+8\|h^{\epsilon}(t)-h(t)\|^2_{H^{1}(D)^*} 
\end{align*}
by \eqref{sobolev-norms}. Here, applying the result in Section~\ref{HDL-smooth-init},
Lemma~\ref{approx-SDE} and Proposition~\ref{prop-PDE-approx2},
we complete the proof of Theorem~\ref{hydro}. 

\bibliographystyle{amsplain}
\bibliography{hydro}
\vspace{5mm}

\end{document}